\documentclass{amsart}
\usepackage{amssymb,latexsym}
\usepackage{amscd,amsthm}

\usepackage[all]{xy}
\usepackage{tikz}

\usepackage{tikz-cd}

\newtheorem{theorem}{Theorem}[section]

\newtheorem{lemma}[theorem]{Lemma}
\newtheorem{proposition}[theorem]{Proposition}
\newtheorem{corollary}[theorem]{Corollary}

\theoremstyle{definition}
\newtheorem{definition}[theorem]{Definition}

\newtheorem{remark}[theorem]{Remark}
\newtheorem*{notation}{Notation}
\newtheorem{example}[theorem]{Example}

\DeclareMathOperator{\Ext}{Ext}
\DeclareMathOperator{\Hom}{Hom}

\DeclareMathOperator{\Ker}{Ker}

 \newcommand{\Oplus}{\ensuremath{\vcenter{\hbox{\scalebox{1.4}{$\oplus$}}}}} 				
 
\newcommand{\cat}[1]{\mathcal{#1}}           

\newcommand{\tensor}{\otimes}

\newcommand{\class}[1]{\mathcal{#1}}   

\newcommand{\Z}{\mathbb{Z}}
\newcommand{\Q}{\mathbb{Q/Z}}

\newcommand{\ch}{\textnormal{Ch}(R)}

\newcommand{\rmod}{R\textnormal{-Mod}}

\newcommand{\tilclass}[1]{\widetilde{\class{#1}}}
\newcommand{\dgclass}[1]{dg\widetilde{\class{#1}}}
\newcommand{\dwclass}[1]{dw\widetilde{\class{#1}}}

\newcommand{\rightperp}[1]{#1^{\perp}}
\newcommand{\leftperp}[1]{{}^\perp #1}

\begin{document}

\title{Acyclic complexes of FP-injective modules over Ding-Chen rings}

\author{James Gillespie}
\address{J.G. \ Ramapo College of New Jersey \\
         School of Theoretical and Applied Science \\
         505 Ramapo Valley Road \\
         Mahwah, NJ 07430\\ U.S.A.}
\email[Jim Gillespie]{jgillesp@ramapo.edu}
\urladdr{http://pages.ramapo.edu/~jgillesp/}

\date{\today}

\keywords{Ding-Chen ring; FP-injective; FP-projective; FP-pro-injective, Ding injective; Gorenstein flat; Gorenstein FP-pro-injective, stable module category} 

\thanks{2020 Mathematics Subject Classification. 	16E05, 16E65,   18G25, 18G35, 18G65, 18G80, 18N40}

\begin{abstract}
 We present a new method for combining two cotorsion pairs to obtain  an abelian model structure  and we apply it to  construct and study a new  model structure  on left $R$-modules over a left coherent ring $R$. Its class of fibrant objects is generated by the weakly Ding injective $R$-modules, a class of modules recently studied by Iacob.  We give several characterizations of the fibrant modules, one being that they are the cycle modules of certain acyclic complexes of FP-injective (i.e., absolutely pure)  $R$-modules. In the case that  $R$ is a Ding-Chen ring, we show that they are precisely the modules appearing as cycles of acyclic complexes of FP-injectives. This leads to  a new descrption of $\textnormal{Stmod}(R)$, the  stable module category of a Ding-Chen ring $R$, by way of modules we call Gorenstein FP-pro-injective. These are modules that appear as a cycle module of a totally acyclic complex of FP-projective-injective modules. As  a completely separate application of  the new model category method, we show that all complete cotorsion pairs, even non-hereditary ones, lift to   abelian models for the derived category of a ring.   
\end{abstract}

\maketitle

\section{Introduction}\label{section-intro}

Let $R$ be a (two-sided) coherent ring. A (say left) $R$-module $A$ is called  \emph{FP-injective}  if  $\Ext^1_R(M,A) = 0$ for all finitely presented modules $M$.  This condition on a module $A$  is satisfied   if and only if  any short exact sequence  with $A$ as its first term must be a  pure exact sequence. For this reason FP-injective modules are  also  called   \emph{absolutely pure}  modules, and are often viewed as a dual notion to that of a flat module.  Indeed  an $R$-module $F$ is flat if and only if any short exact sequence  whose last term is $F$ is a pure exact sequence. The coherence of the ring $R$ ensures that FP-injectives satisfy other dual analogs to useful properties of flat modules. Moreover,     the functor $\Hom_{\Z}(-,\Q)$ provides a perfect duality between FP-injective left (resp. right) $R$-modules and flat right (resp. left) $R$-modules.

In relative homological algebra, a left $R$-module $M$ is called \emph{Ding projective} if it equals the 0-cycles, $M = Z_0X$, of some exact (i.e. acyclic) chain complex $X$ of projective  modules for which $\Hom_R(X,F)$ is also exact for any flat module $F$. Equivalently, $A \otimes X$ is exact for any injective, or even just for any FP-injective   right $R$-module $A$. This  means that the Ding projective modules coincide with the \emph{projectively coresolved Gorenstein flat modules}, or \emph{PGF modules}, that appeared in~\cite{saroch-stovicek-G-flat}.
There is also the dual notion of a Ding injective module: $M$ is called \emph{Ding injective} if $M = Z_0X$ for some exact chain complex $X$ of injective modules for which $\Hom_R(A,X)$ is exact for any FP-injective  module $A$. 

These classes of Ding projective and Ding injective modules are the cofibrant (resp. fibrant) objects of two different abelian model structures on $\rmod$, the category  of (left)  $R$-modules. We recall  from~\cite{hovey, gillespie-book}  that an abelian model structure $\mathfrak{M}$, on $\rmod$, is equivalent to a thick class $\class{W}$ of `trivial' modules linking together two complete cotorsion pairs, $(\class{Q}, \class{W}\cap\class{R})$ and $(\class{Q}\cap\class{W},\class{R})$. We denote this information by way of a triple $\mathfrak{M} = (\class{Q},\class{W},\class{R})$ and call $\class{Q}$ (resp. $\class{R}$) the class of cofibrant (resp. fibrant) objects.

Letting $\class{DP}$ denote the class of all Ding projective $R$-modules, there is an abelian model structure on $\rmod$, represented by the triple  $$\mathfrak{M}_{proj} = (\class{DP}, \class{V},  All).$$ 
Every module is fibrant in this model structure and the class of trivial modules is $\class{V} := \rightperp{\class{DP}}$, where this  denotes the right orthogonal of $\class{DP}$ with respect to $\Ext^1_R(-,-)$. 
The dual is also true: Letting  $\class{DI}$ denote the class of all Ding injective $R$-modules, we have  an abelian model structure  $$\mathfrak{M}_{inj} = (All, \class{W}, \class{DI}),$$
 where $\class{W} := \leftperp{\class{DI}}$ is the left Ext-orthogonal of $\class{DI}$. 
 
Now let $\class{GF}$ denote the class of all Gorenstein flat modules. Such modules $M$ equal the 0-cycle module, $M = Z_0X$, of some \emph{F-totally acyclic} complex of flat modules, meaning $I \otimes X$ is exact for any injective right $R$-module $I$.  
Then there is another abelian model structure on $\rmod$,  represented by the triple  $$\mathfrak{M}_{flat} = (\class{GF}, \class{V},  \class{C}).$$ 
Note in particular that its class $\class{V}$ of trivial objects is the same as those in $\mathfrak{M}_{proj}$.  But now the  class of fibrant objects is the class $\class{C}$   of   all cotorsion modules, that is, the class of all modules $C$ having the property that $\Ext^1_R(F,C) = 0$ for all flat modules $F$.

$\mathfrak{M}_{proj}$ and  $\mathfrak{M}_{inj}$ are dual to one another, and we  show in Theorem~\ref{theorem-weakly-Ding-injectives}   that $\mathfrak{M}_{flat}$ also has a dual. 
To describe it, let $\class{FI}$ denote  the class of all FP-injective modules.     
We have a complete cotorsion pair $(\class{FP}, \class{FI})$, and the modules  in $\class{FP} := \leftperp{\class{FP}}$ are called \emph{FP-projective}. Note how the definition of the class $\class{FP}$ of all FP-projective modules is dual to the definition of the class $\class{C}$ of all  cotorsion modules.  
Now, Theorem~\ref{theorem-weakly-Ding-injectives}  asserts that we have an 
abelian model structure on $\rmod$,  represented by the triple 
$$\mathfrak{M}_{fp} = (\class{FP}, \class{W}, \rightperp{(\leftperp{\textnormal{w}\class{DI}})}).$$
In particular, it has  the same class of trivial objects, $\class{W} := \leftperp{\class{DI}}$, as in $\mathfrak{M}_{inj}$.
The class of fibrant modules is generated by the class 
$\textnormal{w}\class{DI}$, of \emph{weakly Ding injective} modules, 
a class of modules going back to~\cite{Goren-FP-inj} and much studied by Iacob in the recent works~\cite{iacob-weakly-Ding-1, iacob-weakly-Ding-2, iacob-weakly-Ding-3}. We hesitate to assign a  specific notation for the class  $\rightperp{(\leftperp{\textnormal{w}\class{DI}})}$, and to choose a name for these modules. However,  it is tempting to name them \emph{Ding FP-injective} modules, for  they have many properties that are dual to the Gorenstein flat modules, and we note that the Gorenstein flat  modules  are the same thing as the notion of a \emph{Ding flat} module. I.e., it is equivalent to let the modules $I$ in the  above notion of F-totally acyclic  range through all FP-injective modules, not just all injective modules. This is true for any ring, not just coherent ones;  see~\cite[Prop.~7.2]{gillespie-ding-modules}.

The most interesting case  of the model structure  $\mathfrak{M}_{fp}$ of  Theorem~\ref{theorem-weakly-Ding-injectives} is when $R$ is taken to be a Ding-Chen ring, in the sense of~\cite{ding and chen 93} and~\cite{gillespie-Ding-Chen rings}. In this case, the two classes of trivial objects identify, $$\class{W} = \leftperp{\class{DI}} = \rightperp{\class{DP}} = \class{V},$$ and they   coincide with the class of all modules of finite flat dimension, or equivalently, of finite FP-injective dimension. We now summarize the results of the paper for the case of a Ding-Chen ring $R$, using the following notation and terminology:  Again,   $(\class{FP}, \class{FI})$ denotes the complete cotorsion pair where  $\class{FI}$ is  the class of all  FP-injective modules, and   $\class{FP}$ is the class of all FP-projective modules.  We call a module $M$ in the core of this cotorsion pair, i.e.,  $M \in \class{FP}\cap\class{FI}$, an \emph{FP-pro-injective} module. Then by a  \emph{totally acyclic complex of FP-pro-injective} modules we mean an exact complex $X$ with each $X_n$ FP-pro-injective and such that both $\Hom_R(M, X)$ and $\Hom_R(X, M)$ are also exact complexes whenever $M$ is FP-pro-injective. Finally, by a \emph{Gorenstein FP-pro-injective} module we mean one that is equal to the 0-cycle module of some totally acyclic complex of FP-pro-injective modules. 
 
\begin{theorem}\label{them-intro}
Let $R$ be a  Ding-Chen ring. 
Then each of the following hold.  
\begin{itemize}
\item (Proposition~\ref{prop-exact-complexes-FP-injectives}.) $\rightperp{(\leftperp{\textnormal{w}\class{DI}})} = \class{Z}$, where $\class{Z}$ denotes the class of all cycles of acyclic complexes of FP-injective modules. Moreover,  
any acyclic  chain complex $X$ of FP-injectives automatically has the property that $\Hom_R(M,X)$ is exact for any FP-pro-injective module $M$.
\item (Proposition~\ref{prop-exact-complexes-FP-injectives}.)   $\class{FP}\cap\class{Z} = \class{GFP}$, where $\class{GFP}$ denotes the class of all Gorenstein FP-pro-injective modules.
Moreover,  any acyclic  chain complex $X$ of FP-pro-injectives is automatically  totally acyclic. 
\item (Theorem~\ref{theorem-exact-FP-injectives}.) The triple $\mathfrak{M}_{fp} = (\class{FP}, \class{W}, \class{Z})$ represents a cofibrantly generated  abelian model structure on $R\textnormal{-Mod}$.
\item (Theorem~\ref{theorem-exact-FP-injectives}.)  The stable module category of $R$, denoted $\textnormal{Stmod}(R)$, coincides with the homotopy category of $\mathfrak{M}_{fp}$  , yielding a triangle equivalence 
 $$\textnormal{Stmod}(R) = \textnormal{Ho}(\mathfrak{M}_{fp}) \simeq \textnormal{St}(\class{GFP}).$$ 
Here,  $\textnormal{St}(\class{GFP})$ denotes the stable category of  the Frobenius category of all Gorenstein FP-pro-injective modules. 
\item  (Theorem~\ref{theorem-exact-FP-injectives}.) We have the following equivalent statements, further characterizing the class of fibrant objects: 
\begin{enumerate}
\item $M \in \class{Z}  = \rightperp{(\leftperp{\textnormal{w}\class{DI}})} = \rightperp{(\leftperp{(\class{FI}\cup\class{DI})})}$. That is,  $M=Z_{0}X$ for some acyclic  complex $X$ of FP-injective modules which necessarily has the property that $\Hom_R(M, X)$ is exact for any FP-pro-injective $M$.
\item There is a short exact sequence $0 \xrightarrow{} D \xrightarrow{} A \xrightarrow{} M \xrightarrow{}0$ with $A \in \class{FI}$ and $D\in\class{DI}$.
\item $\Ext^i_R(W,M)=0$ for all FP-pro-injectives $W$ and all $i\geq1$.
\item There is a short exact sequence $0 \xrightarrow{} M \xrightarrow{} D \xrightarrow{} A \xrightarrow{}0$ with $A \in \class{FI}$ and $D\in\class{DI}$ with the property that is remains exact upon applying $\Hom_R(M,-)$ for any FP-pro-injective module $M$.
\end{enumerate}
\item (Theorem~\ref{theorem-duality}.)
$(\class{GF},\class{Z})$ is a complete duality pair,  in the sense of~\cite[Def.~7]{gillespie-iacob-duality-pairs}. In particular, $(\class{GF},\class{Z})$ and $(\class{Z}, \class{GF})$ are both coproduct and product closed duality pairs in the sense of~\cite[Def.~2.1]{holm-jorgensen-duality}. 
 \end{itemize}
\end{theorem}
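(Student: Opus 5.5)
This is a summary theorem, so the plan is to assemble it from the general model-structure construction and the Ding-Chen specific facts, in the order the bullets are listed but with the model structure established first.

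\textbf{Step 1: the model structure.} Start from Theorem~\ref{theorem-weakly-Ding-injectives}, the new method that combines two cotorsion pairs, applied to $(\class{FP},\class{FI})$ and $(\class{W},\class{DI})$. This gives $\mathfrak{M}_{fp}=(\class{FP},\class{W},\rightperp{(\leftperp{\textnormal{w}\class{DI}})})$ over any coherent ring. Cofibrant generation follows because both input cotorsion pairs are cogenerated by sets. For $(\class{FP},\class{FI})$ this holds by finitely presented modules. For $(\class{W},\class{DI})$ over a Ding-Chen ring, $\class{W}$ is the class of modules of finite flat dimension, which is deconstructible.

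\textbf{Step 2: the Ding-Chen input.} Use that $\class{W}=\class{V}$ consists of modules of finite FP-injective dimension, and that $\class{FI}\subseteq\class{W}$.

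\textbf{Step 3: characterizing the fibrant objects.} I will prove the cycle of implications (2)$\Rightarrow$(3)$\Rightarrow$(1)$\Rightarrow$(4)$\Rightarrow$(2).
\begin{itemize}
\item (2)$\Rightarrow$(3): Given $0\to D\to A\to M\to 0$, we have $\Ext^{i}(W,D)=0$, because an FP-pro-injective $W$ lies in $\class{FI}\subseteq\class{W}=\leftperp{\class{DI}}$ and the pair is hereditary. We also have $\Ext^i(W,A)=0$ since $W\in\class{FP}$. The long exact sequence then gives (3).
\item (3)$\Rightarrow$(1): Take special $\class{FI}$-preenvelopes and $\class{FP}$-precovers iteratively. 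Since $\class{FP}\cap\class{W}$ equals the FP-pro-injectives (Ding-Chen), the vanishing in (3) says $M$ lies in $\rightperp{(\class{FP}\cap\class{W})}$, which is exactly the fibrant class of a model structure with cofibrants $\class{FP}$. Build an acyclic FP-injective complex around $M$: to the right, use $\class{FI}$-envelopes; to the left, take an $\class{FP}$-precover $0\to K\to P\to M\to 0$ and then an FP-injective $A\twoheadrightarrow P$ with kernel in $\class{FP}\cap\class{W}$. Check that the cokernels stay in the class via Ext-vanishing.
\item (1)$\Rightarrow$(4): Use the right half of the complex and the fact that cycles of acyclic complexes of FP-injectives over a Ding-Chen ring are Ding injective up to an FP-injective. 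Here the trick is that $Z$ has finite FP-injective dimension issues handled by $\class{W}$.
\item (4)$\Rightarrow$(2): Take pullbacks along a special $\class{DI}$-preenvelope.
\end{itemize}
The automatic exactness of $\Hom(M,X)$ for FP-pro-injective $M$ is immediate from (3) applied to all cycles. The equality with $\rightperp{(\leftperp{(\class{FI}\cup\class{DI})})}$ follows because $\textnormal{w}\class{DI}$ and $\class{FI}\cup\class{DI}$ generate the same cotorsion pair.

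\textbf{Step 4: Gorenstein FP-pro-injectives.} An acyclic complex $X$ of FP-pro-injectives has cycles in $\class{Z}$ by Step 3. Each cycle is also in $\class{FP}$, since $\class{FP}$ is closed under kernels of epimorphisms (the pair is hereditary for $R$ coherent) and the complex has finite resolutions. I expect this to be the main obstacle, because boundedness fails. My first attempt is to argue $Z_nX\in\class{FP}$ by showing $\Ext^1(Z_nX,A)=0$ through the dimension shift $\Ext^1(Z_n,A)\cong\Ext^{k}(Z_{n-k+1},A)$ together with $A$ having a finite $\class{FP}$-style bound in a Ding-Chen ring. This gives totality and $\class{FP}\cap\class{Z}=\class{GFP}$.

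\textbf{Step 5: homotopy category and duality.} For the stable category, note that $\class{FP}\cap\class{Z}$ is Frobenius with projective-injectives equal to the FP-pro-injectives, so $\textnormal{Ho}(\mathfrak{M}_{fp})\simeq\textnormal{St}(\class{GFP})$. Because $\class{W}$ is shared with $\mathfrak{M}_{inj}$ and $\mathfrak{M}_{proj}$, this homotopy category is $\textnormal{Stmod}(R)$. For the duality, compare character modules: $M^+\in\class{GF}$ iff $M\in\class{Z}$, via $\Hom_{\Z}(-,\Q)$ sending acyclic FP-injective complexes to F-totally acyclic flat ones, with coherence used in the reverse direction.
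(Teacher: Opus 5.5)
\textbf{There is a genuine gap where the theorem's real content lies.} That content is the inclusion $\class{Z}\subseteq\rightperp{(\leftperp{\textnormal{w}\class{DI}})}=\rightperp{(\class{FP}\cap\class{W})}$: an \emph{arbitrary} exact complex $X$ of FP-injectives is automatically $\Hom_R(\class{FP}\cap\class{W},-)$-acyclic. The reverse inclusion is already Theorem~\ref{theorem-fibrant-objects}(6). Your step (1)$\Rightarrow$(4) only invokes ``the fact'' that such cycles are Ding injective up to an FP-injective. That is (4) with its Hom-exactness removed. Your remark that Hom-acyclicity is ``immediate from (3) applied to all cycles'' presupposes that the cycles satisfy (3). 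So the cycle of implications is circular here.

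The Hom-exactness cannot be dropped, and your (4)$\Rightarrow$(2) ``by pullbacks'' never uses it. Over $R=\mathbb{Z}$, the sequence $0\to\mathbb{Z}\to\mathbb{Q}\to\mathbb{Q}/\mathbb{Z}\to0$ has the shape of (4), yet $\mathbb{Z}\notin\class{Z}$ (here $\class{Z}$ is the class of injectives).

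The missing idea is the paper's reduction to complexes of injectives in Proposition~\ref{prop-exact-complexes-FP-injectives}:
\begin{itemize}
\item The complete cotorsion pair $(\class{W}_{\textnormal{co}},\dwclass{I})$ in $\ch$ gives $0\to I\to W\to X\to0$ with $W$ coacyclic and $I$ a complex of injectives.
\item This sequence is degreewise split, so it stays exact under $\Hom_R(N,-)$.
\item $I$ is exact, hence $\Hom_R(\class{W},-)$-acyclic over a Ding-Chen ring \cite[Prop.~7.9]{stovicek-purity}.
\item $W$ is a coacyclic complex of FP-injectives, so it has FP-injective cycles \cite[Prop.~6.11]{stovicek-purity} and is $\Hom_R(\class{FP},-)$-acyclic.
\end{itemize}
Together these give exactness of $\Hom_R(N,X)$ for every $N\in\class{FP}\cap\class{W}$.

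\textbf{Several other steps also fail as written.}

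In (3)$\Rightarrow$(1) you use $\class{FP}\cap\class{W}=\class{FP}\cap\class{FI}$. This is false: over $\mathbb{Z}$, the module $\mathbb{Z}$ lies in $\class{FP}\cap\class{W}$ but is not injective. The correct input is Proposition~\ref{prop-FP-dimensions}: each $N\in\class{FP}\cap\class{W}$ has a coresolution $0\to N\to W^0\to\cdots\to W^d\to0$ by FP-pro-injectives. Dimension shifting then gives $\Ext^1_R(N,M)\cong\Ext^{d+1}_R(W^d,M)=0$, which is exactly why (3) requires all $i\geq1$.

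In Step~4, the shift $\Ext^1_R(Z_n,A)\cong\Ext^{k+1}_R(Z_{n-k},A)$ yields nothing, because FP-injectives need not have finite injective dimension. What is needed is FP-projective periodicity over coherent rings \cite[Cor.~4.7]{bazzoni-hrbek-positselski-fp-projective-periodicity}. This is a genuinely deep input: for von Neumann regular rings it already contains the Benson--Goodearl periodicity theorem. You also omit the inclusion $\class{FP}\cap\class{Z}\subseteq\class{GFP}$. That inclusion needs the pullback construction of Proposition~\ref{prop-bifibrant-acyclicity} to produce components in $\class{FP}\cap\class{FI}$.

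Finally, coherence alone does not give the converse halves of the duality. For the first converse, the paper takes $0\to D\to W\to Z\to0$ with $W\in\class{W}$ and $D\in\class{DI}$. Then $W^+$ is Gorenstein flat of finite flat dimension, hence flat, so $W\in\class{FI}$, and (2) gives $Z\in\class{Z}$. The other converse uses the Ding projective pair, the identity $\class{W}\cap\class{Z}=\class{FI}$, and \cite[Thm.~4.11(4)]{saroch-stovicek-G-flat}.
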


Our results are in line with Dalezios' work~\cite{dalezios-FP-injective} on complexes of  FP-pro-injectives, and provides us with the duals  (in the Ding-Chen case) of the  flat-cotorsion theory from~\cite{cet-totally-flat-cot-theory}. By combining Theorem~\ref{them-intro}  with other known results from the literature,   $\textnormal{Stmod}(R)$,  the stable module category of a Ding-Chen ring $R$, now  has four representations:  It is equivalent to the category of all  modules appearing as a cycle  of some acyclic complex of projective (resp.  injective, resp.  flat-cotorsion, resp.  FP-pro-injective) $R$-modules. Moreover, in each case, any such complex is totally acyclic. See~\cite[Theorems~4.7/4.10]{gillespie-Ding-Chen rings}, \cite[Theorem~1.2]{gillespie-ding-modules}, and~\cite[Theorems~4.4/5.2]{cet-totally-flat-cot-theory}.

Theorems~\ref{theorem-cots-models} and~\ref{theorem-fibrant-objects} are the theoretical underpinning of the aforementioned results.  They provide a  method for constructing a new abelian model structure,  from a given one having every object either cofibrant or fibrant.  These results   are interesting on their own, and in Section~\ref{section-non-hereditary}
we give a second application of them. 
To explain it briefly, 
again  let   $\rmod$  be the category of modules over a ring, and now let $\ch$ denote  the corresponding category of  chain complexes.  In~\cite{gillespie} we described a recipe for lifting cotorsion pairs $(\class{A},\class{B})$  in $\rmod$ to cotorsion pairs yielding potential model structures on $\ch$. This procedure required that the cotorsion pair be \emph{hereditary} in the sense that $\Ext^i_R(A,B) = 0$ for all $A\in\class{A}$, $B\in\class{B}$, and $i\geq 1$. 
A question that I've been asked more than once  is whether or not there are examples of non-hereditary abelian model structures. Certainly, once one has a non-hereditary complete cotorsion pair, and these certainly exist, then we trivially obtain a non-hereditary abelian model structure. For instance, if $(\class{C},\class{F})$ is any complete cotorsion pair, then we trivially obtain an abelian model structure $(\class{C}, \class{W}, \class{F})$, by taking $\class{W}$ to be the class of all objects. But such   examples are unsatisfying  as their corresponding homotopy category is zero. We would like to see nontrivial examples. 
 Theorem~\ref{theorem-non-hereditary-derived-models}, which follows from Theorems~\ref{theorem-cots-models} and~\ref{theorem-fibrant-objects}, shows  how the method from~\cite{gillespie} extends to non-hereditary cotorsion pairs. 

We now summarize the layout of the paper.  After the preliminary Section~\ref{section-prelims},  Section~\ref{section-merging-models} is dedicated to proving the new model category results,  Theorems~\ref{theorem-cots-models} and~\ref{theorem-fibrant-objects}. We give a general presentation that works for Quillen exact categories, and which also have duals. Section~\ref{section-non-hereditary} gives the quick application to the construction of non-hereditary model structures for the derived category of a ring. In Section~\ref{subsec-tech-acyclic} we return to the general setting of Section~\ref{section-merging-models}  in order to prove some results about the homotopy category associated to the new model structures. The main result here is Theorem~\ref{theorem-stable-CF} which provides conditions ensuring that the homotopy category can be described in terms of totally acyclic complexes. In Section~\ref{section-weakly-B} we generalize some of Iacob's results from~\cite{iacob-weakly-Ding-3}, and then relate them  to   the existence of certain abelian model structures in  Gorensteing homological algebra. The model structure $\mathfrak{M}_{fp} = (\class{FP}, \class{W}, \rightperp{(\leftperp{\textnormal{w}\class{DI}})})$ on modules over a left coherent ring is a special case, and it is studied in Section~\ref{section-weakly-Ding-model}. In Section~\ref{Section-Ding-Chen} we specialize to Ding-Chen rings and prove the remainder of Theorem~\ref{them-intro}.


\section{Preliminaries}\label{section-prelims}

We summarize here the main terminology and  references  from relative homological algebra that we will use. Most fundamental to the paper are cotorsion pairs, covering classes,  abelian model structures,   and Ding-Chen rings.

\subsection{Cotorsion pairs and abelian model categories}
Let $\cat{A}$ be an abelian category such as $\rmod$, the category  of (left) modules over a ring $R$.  By definition, a pair of classes $(\class{X},\class{Y})$ in $\cat{A}$ is called a \emph{cotorsion pair} if $\class{Y} = \rightperp{\class{X}}$ and $\class{X} = \leftperp{\class{Y}}$. Here, given a class of objects $\class{C}$ in $\cat{A}$, the right orthogonal  $\rightperp{\class{C}}$ is defined to be the class of all objects $X$ such that $\Ext^1_{\cat{A}}(C,X) = 0$ for all $C \in \class{C}$. Similarly, we define the left orthogonal $\leftperp{\class{C}}$. We call the cotorsion pair \emph{hereditary} if $\Ext^i_{\cat{A}}(X,Y) = 0$ for all $X \in \class{X}$, $Y \in \class{Y}$, and $i \geq 1$. The cotorsion pair is \emph{complete} if it has enough injectives and enough projectives. This means that for each $A \in \cat{A}$ there exist short exact sequences $0 \xrightarrow{} A \xrightarrow{} Y \xrightarrow{} X \xrightarrow{} 0$ and $0 \xrightarrow{} Y' \xrightarrow{} X' \xrightarrow{} A \xrightarrow{} 0$ with $X,X' \in \class{X}$ and $Y,Y' \in \class{Y}$.
Standard references for cotorsion pairs of $R$-modules  include the books~\cite{enochs-jenda-book} and~\cite{trlifaj-book}. 

We will use the correspondence between complete cotorsion pairs and  abelian model categories shown in~\cite{hovey}. These  ideas naturally extend to the more general setting of Quillen exact categories, and we will use the book \cite{gillespie-book} to reference standard facts about cotorsion pairs and abelian model structures in this setting. However,  Quillen exact categories only play a role in Sections~\ref{section-merging-models} and~\ref{subsec-tech-acyclic}, and our use of exact categories  is really just for convenience and to have the main results for future reference. The reader so inclined can certainly read Sections~\ref{section-merging-models} and~\ref{subsec-tech-acyclic}  with the category of $R$-modules (or chain complexes) in mind. Afterall, the  applications in the current paper are only to $R$-modules and chain complexes of $R$-modules. 
 
 A  class of objects $\class{W}$ in an abelian, or exact, category $\cat{A}$ is said to be \emph{thick} if it is closed under direct summands and satisfies that whenever 2 out of 3 terms in a short exact sequence are in $\class{W}$, then so is the third. 
Then as mentioned in the Introduction,   an abelian model structure  is equivalent to a thick class $\class{W}$ linked together  with two complete cotorsion pairs, $(\class{Q}, \class{W}\cap\class{R})$ and $(\class{Q}\cap\class{W},\class{R})$. We denote this information by way of a triple $\mathfrak{M} = (\class{Q},\class{W},\class{R})$ which we call a \emph{Hovey triple}.  Then $\class{Q}$ (resp. $\class{R}$) is the class of cofibrant (resp. fibrant) objects, and $\class{W}$  is the class of trivial objects. It is worth mentioning that we need not  distinguish  between a Hovey triple and an abelian model structure when the underlying additive category $\cat{A}$ is weakly idempotent compete. This is a very mild condition; it just means that every split monorphism in $\cat{A}$ possesses a cokernel in $\cat{A}$, or equivalently, every split epimorphism possesses a kernel.
 
For  convenience, we recall what we mean by an injective  cotorsion pair. Let  $(\cat{A},\class{E})$ be a Quillen exact category, so  $\class{E}$  denotes the distinguished class of short exact sequences. Assuming $(\cat{A},\class{E})$ has  enough injectives, we say that a complete  cotorsion pair   $(\class{W}, \class{R})$ in $(\cat{A},\class{E})$ is  an  \emph{injective cotorsion pair}  if  $\class{W}$ is  thick and $\class{W}\cap\class{R}$ coincides with the class of all injective objects.  Assuming $\cat{A}$ is weakly idempotent complete,  an injective cotorsion pair  is equivalent to an abelian model structure on $(\cat{A},\class{E})$ in which every object is cofibrant. The dual notion, defined  in the case that $(\cat{A},\class{E})$ has enough projectives, is that of a  \emph{projective cotorsion pair}.

\subsection{Covers and envelopes and duality pairs}
The book  \cite{enochs-jenda-book} is our standard reference for topics in  relative and Gorenstein  homological algebra. In particular, the notions of (pre)covers and (pre)envelopes of $R$-modules are discussed in \cite{enochs-jenda-book}. Occasionally we will utilize the relationship between these notions and that of a  duality pair of $R$-modules. For this we will give references to~\cite{holm-jorgensen-precovers-purity}, \cite{holm-jorgensen-duality},   \cite{gillespie-duality-pairs},  and~\cite{gillespie-iacob-duality-pairs}.

\subsection{Hom-acyclicity terminology}\label{subsec-Hom-acyclicity}
Let $X$ be a chain complex of $R$-modules, or more generally, a chain complex of objects from some exact category $(\cat{A},\class{E})$. Let $\class{C}$ be   some class of objects of $\cat{A}$.  
We will say that  $X$   is \emph{$\Hom_{\cat{A}}(\class{C},-)$-acyclic} if $\Hom_{\cat{A}}(C,X)$ is an exact complex of abelian groups for all $C \in \class{C}$. Moreover, if $X$ itself is also $\class{E}$-exact, we will say that $X$ is an \emph{exact $\Hom_{\cat{A}}(\class{C},-)$-acyclic} complex. If the similar statement is true in the contravariant variable,  we will say that $X$ is \emph{(exact) $\Hom_{\cat{A}}(-, \class{C})$-acyclic}. We use similar terminology for short exact sequences: We speak of \emph{$\Hom_{\cat{A}}(\class{C},-)$-exact} short exact sequences and  \emph{$\Hom_{\cat{A}}(- , \class{C})$-exact} short exact sequences.

\subsection{Ding-Chen rings}\label{subsection-Ding-Chen}
Recall that a two-sided Noetherian ring $R$ is \emph{Iwanaga-Gorenstein} if  $\textnormal{id}(R_R)$ and $\textnormal{id}({}_RR)$, that is, its injective dimensions when considered as  a left and right module over itself,  are both  finite.  In this case, we have $\textnormal{id}(R_R) = \textnormal{id}({}_RR)$.  Ding and Chen proved a generalization of this to coherent rings: They showed  in~\cite[Corollary 3.18]{ding and chen 93} that if $R$ is a two-sided coherent ring with finite self FP-injective dimensions,   $FP\textnormal{-id}(R_R)$ and $FP\textnormal{-id}({}_RR)$,  then we must have  $FP\textnormal{-id}(R_R)=FP\textnormal{-id}({}_RR)$. Such a ring $R$ is called a \emph{Ding and Chen ring}, and if this common dimension is $d <\infty$, we say that $R$ is a Ding and Chen ring of \emph{dimension} $d$, or a  \emph{$d$-FC ring}.
It is also shown in~\cite{ding and chen 93} that the following statements, concerning the flat and  FP-injective dimensions of an $R$-module $M$, are equivalent: 
$$\bullet \,  \textnormal{fd}(M)  < \infty \ \ \ \ \  \bullet \,  \textnormal{fd}(M)  \leq d    \ \ \ \ \    \bullet \,   \textnormal{FP-id}(M) < \infty  \ \ \ \ \   \bullet \,   \textnormal{FP-id}(M) \leq d.$$
Letting $\class{W}$ denote the class of all such modules $M$, we define its associated \emph{stable module category}  to be 
 $$\textnormal{Stmod}(R) := \rmod/\class{W},$$
 where the right-hand side denotes the triangulated localization 
 of $\rmod$,  the category  $R$-modules,  with respect to $\class{W}$.  The idea of a triangulated localization is made precise in~\cite[\S6.7]{gillespie-book}, but briefly, it  means each of the following hold: (i) $\textnormal{Stmod}(R)$ is a triangulated category, (ii) there is a canonical additive functor $\gamma \colon \rmod \xrightarrow{} \textnormal{Stmod}(R)$ that functorially assigns  short exact sequences to  exact triangles, (iii) $\gamma(W) = 0$ for all $W\in\class{W}$, and, (vi) $\gamma$ is universally initial with respect to the properties (i)--(iii).   
As described in the Introduction,
$\textnormal{Stmod}(R)$ may be realized as the homotopy category of the three abelian model structures, $\mathfrak{M}_{proj}$, $\mathfrak{M}_{inj}$, and $\mathfrak{M}_{flat}$. See~\cite[Theorems~4.7/4.10]{gillespie-Ding-Chen rings}, \cite[Theorem~1.2]{gillespie-ding-modules}, and~\cite[Theorems~4.4/5.2]{cet-totally-flat-cot-theory} for details.



\section{Merging a cotorsion pair with an injective model structure}\label{section-merging-models}

In this section we show how to join an injective (resp.  projective) model structure together with a second complete cotorsion pair. It produces a new model structure having the same trivial objects but whose cofibrant (resp.  fibrant) objects come from the second cotorsion pair.  
The idea works in great generality, so throughout this section we let $(\class{A},\class{E})$ be any  exact category. That is, $\cat{A}$ is an additive category with an exact structure $\class{E}$ in the sense of Quillen~\cite{quillen-algebraic K-theory}.


\begin{lemma}\label{lemma-cots}
Let  $(\class{X},\class{Y})$ and $(\class{X}',\class{Y}')$ be   cotorsion pairs in $(\class{A},\class{E})$.  Then each of these statements hold: 
\begin{enumerate}
\item $(\class{X}\cap \class{X}', \rightperp{(\leftperp{(\class{Y}\cup\class{Y}')})})$ is the cotorsion pair generated by $\class{Y}\cup\class{Y}'$. \\
\item $(\leftperp{(\rightperp{(\class{X}\cup\class{X}')})} , \class{Y}\cap \class{Y}')$ is the cotorsion pair cogenerated by $\class{X}\cup\class{X}'$.
\end{enumerate}  
\end{lemma}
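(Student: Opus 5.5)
The plan is to reduce everything to the general fact that, for any class $\class{S}$, the pair $(\leftperp{\class{S}}, \rightperp{(\leftperp{\class{S}})})$ is a cotorsion pair. This is the cotorsion pair generated by $\class{S}$, and dually $(\leftperp{(\rightperp{\class{S}})}, \rightperp{\class{S}})$ is the one cogenerated by $\class{S}$. I will first record this fact, which uses only the Galois-connection properties of the orthogonals with respect to $\Ext^1$ in $(\cat{A},\class{E})$. The orthogonals reverse inclusions, and $\class{S} \subseteq \rightperp{(\leftperp{\class{S}})}$. Hence $\leftperp{(\rightperp{(\leftperp{\class{S}})})} = \leftperp{\class{S}}$. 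Once this is in place, statement (1) reduces to computing $\leftperp{(\class{Y}\cup\class{Y}')}$, and statement (2) reduces to computing $\rightperp{(\class{X}\cup\class{X}')}$.

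For (1), I will show that $\leftperp{(\class{Y}\cup\class{Y}')} = \class{X}\cap\class{X}'$. An object $A$ lies in $\leftperp{(\class{Y}\cup\class{Y}')}$ exactly when $\Ext^1(A,Y)=0$ for all $Y\in\class{Y}$ and all $Y\in\class{Y}'$. That is, $A\in\leftperp{\class{Y}}\cap\leftperp{\class{Y}'}$. Since $(\class{X},\class{Y})$ and $(\class{X}',\class{Y}')$ are cotorsion pairs, $\leftperp{\class{Y}}=\class{X}$ and $\leftperp{\class{Y}'}=\class{X}'$. Substituting into the general fact then gives the pair $(\class{X}\cap\class{X}', \rightperp{(\leftperp{(\class{Y}\cup\class{Y}')})})$, as claimed.

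Statement (2) is the formal dual. We have $\rightperp{(\class{X}\cup\class{X}')} = \rightperp{\class{X}}\cap\rightperp{\class{X}'} = \class{Y}\cap\class{Y}'$. Plugging this into the cogenerated cotorsion pair gives $(\leftperp{(\rightperp{(\class{X}\cup\class{X}')})}, \class{Y}\cap\class{Y}')$.

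There is no real obstacle. The only point needing care is that the generated-pair fact holds in a general exact category, where $\Ext^1$ is Yoneda Ext of $\class{E}$-conflations. However, that fact uses only that orthogonality is a symmetric vanishing relation, so it carries over verbatim.
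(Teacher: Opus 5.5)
Your proposal is correct and follows essentially the same route as the paper. Both arguments reduce to checking $\leftperp{(\class{Y}\cup\class{Y}')} = \class{X}\cap\class{X}'$ (which the paper proves by two inclusions, and you by the identity $\leftperp{(\class{Y}\cup\class{Y}')} = \leftperp{\class{Y}}\cap\leftperp{\class{Y}'}$), with the standard fact that $(\leftperp{\class{S}}, \rightperp{(\leftperp{\class{S}})})$ is a cotorsion pair left implicit in the paper; statement (2) is dual.
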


\begin{proof}
To prove that the pair in (1) is indeed a cotorsion pair, it is enough to show $\class{X}\cap \class{X}' = \leftperp{(\class{Y}\cup\class{Y}')}$.  Clearly, $\class{X}\cap \class{X}' \subseteq \leftperp{(\class{Y}\cup\class{Y}')}$.  To show the reverse containment, note that $\class{Y} \subseteq \class{Y}\cup\class{Y}' \implies  \leftperp{\class{Y}} \supseteq \leftperp{(\class{Y}\cup\class{Y}')} \implies \class{X} \supseteq \leftperp{(\class{Y}\cup\class{Y}')}$. In the same way, $\class{X}' \supseteq \leftperp{(\class{Y}\cup\class{Y}')}$, and so 
$\class{X}\cap \class{X}' \supseteq \leftperp{(\class{Y}\cup\class{Y}')}$.
Statement (2) is similar. 
\end{proof}

\begin{theorem}[New models from injective ones]\label{theorem-cots-models}
Let $(\class{W},\class{R})$ and  $(\class{C},\class{F})$  both be complete cotorsion pairs in an exact category $(\class{A},\class{E})$ having enough injectives.
Then $(\class{W},\class{R})$ is an injective cotorsion pair with $\class{F}\subseteq \class{W}$  if and only if $\class{F}\cup\class{R}$ generates a Hovey triple,
$$\mathfrak{M} = (\class{C}, \class{W}, \rightperp{(\leftperp{(\class{F}\cup\class{R})})}).$$
Moreover, the following statements are equivalent and characterize the class   of fibrant objects: 
\begin{enumerate}
\item $M \in \rightperp{(\leftperp{(\class{F}\cup\class{R})})}$.
\item Any special $\class{W}$-precover of $M$ is in $\class{F}$.
\item There is a short exact sequence $R \rightarrowtail F \twoheadrightarrow M$ with $F \in \class{F}$ and $R\in\class{R}$.
\item $\Ext^1_{\class{E}}(N,M)=0$ for all modules $N\in \class{C}\cap \class{W}$.
\end{enumerate}
\end{theorem}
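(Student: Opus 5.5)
The plan is to verify the two cotorsion pairs of a Hovey triple directly, using Lemma~\ref{lemma-cots} to identify them. Write $\class{Z} := \rightperp{(\leftperp{(\class{F}\cup\class{R})})}$.

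\emph{Forward direction.} Assume $(\class{W},\class{R})$ is an injective cotorsion pair with $\class{F}\subseteq\class{W}$.

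First, identify the pair $(\class{C}\cap\class{W},\class{Z})$. Since $\leftperp{\class{F}} = \class{C}$ and $\leftperp{\class{R}} = \class{W}$, Lemma~\ref{lemma-cots}(1) shows that $(\class{C}\cap\class{W}, \class{Z})$ is the cotorsion pair generated by $\class{F}\cup\class{R}$.

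Second, show $\class{W}\cap\class{Z} = \class{F}$, so that $(\class{C}, \class{W}\cap\class{Z}) = (\class{C},\class{F})$ is already a complete cotorsion pair.
\begin{itemize}
\item The inclusion $\class{F}\subseteq\class{W}\cap\class{Z}$ is clear, since $\class{F}\subseteq\class{W}$ by hypothesis and $\class{F}\subseteq\class{Z}$ as $\class{F}$ is part of the generating class.
\item Conversely, let $M\in\class{W}\cap\class{Z}$. Take a special $\class{F}$-preenvelope $M\rightarrowtail F\twoheadrightarrow C$ from the completeness of $(\class{C},\class{F})$.
\item Since $M,F\in\class{W}$ and $\class{W}$ is thick, we get $C\in\class{C}\cap\class{W}$. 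Then $\Ext^1_{\class{E}}(C,M)=0$ because $M\in\class{Z}$.
\item So the sequence splits and $M$ is a direct summand of $F$. Hence $M\in\class{F}$, as $\class{F}$ is closed under summands, being a right orthogonal.
\end{itemize}

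Third, prove completeness of $(\class{C}\cap\class{W},\class{Z})$.
\begin{itemize}
\item For an object $A$, take a special $\class{W}$-precover $R\rightarrowtail W\twoheadrightarrow A$ with $R\in\class{R}$. Then take a special $\class{C}$-precover $F\rightarrowtail C\twoheadrightarrow W$ with $F\in\class{F}$.
\item The composite $C\twoheadrightarrow A$ is an admissible epic. By the Noether isomorphism (or $3\times3$) lemma in exact categories, its kernel $K$ sits in a sequence $F\rightarrowtail K\twoheadrightarrow R$.
\item Right orthogonals are extension closed, and $F,R\in\class{Z}$, so $K\in\class{Z}$. Also $C\in\class{W}$, since $F,W\in\class{W}$ and $\class{W}$ is thick.
\item This gives enough projectives. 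Enough injectives then follows by the standard Salce argument, using that $(\class{A},\class{E})$ has enough injectives: embed $A\rightarrowtail I$, take a special precover of the cokernel, and pull back. Here I use that injectives lie in $\class{Z}$.
\end{itemize}
With thickness of $\class{W}$, this gives the Hovey triple. I expect this completeness step, with its exact-category diagram lemmas, to be the main technical point, though it should be routine.

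\emph{Converse.} Assume $\mathfrak{M}$ is a Hovey triple.
\begin{itemize}
\item Then $\class{W}$ is thick, and $\class{F} = \rightperp{\class{C}} = \class{W}\cap\class{Z}\subseteq\class{W}$.
\item Injectives $I$ satisfy $\Ext^1(\class{C},I)=0$, so $I\in\class{F}\subseteq\class{W}$. Injectives also lie in $\class{R}$, so they lie in $\class{W}\cap\class{R}$.
\item Conversely, let $X\in\class{W}\cap\class{R}$ and embed $X\rightarrowtail I\twoheadrightarrow Q$ with $I$ injective. By thickness $Q\in\class{W}$, so $\Ext^1(Q,X)=0$ since $X\in\class{R}$.
\item The sequence therefore splits, and $X$ is a summand of $I$, hence injective. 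So $(\class{W},\class{R})$ is an injective cotorsion pair.
\end{itemize}

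\emph{Characterizations of the fibrant objects.}
\begin{itemize}
\item (1)$\Leftrightarrow$(4) holds because $\leftperp{(\class{F}\cup\class{R})} = \class{C}\cap\class{W}$.
\item For (1)$\Rightarrow$(2), take a special $\class{W}$-precover $R\rightarrowtail W'\twoheadrightarrow M$. Since $R\in\class{R}\subseteq\class{Z}$, $M\in\class{Z}$, and $\class{Z}$ is extension closed, we get $W'\in\class{W}\cap\class{Z}=\class{F}$.
\item (2)$\Rightarrow$(3) follows by taking such a precover, which exists by completeness of $(\class{W},\class{R})$.
\item (3)$\Rightarrow$(1) holds because $\class{Z}$ contains $\class{F}$ and $\class{R}$ and is closed under cokernels of admissible monics between its objects. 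This last closure uses that $\class{Z}$ is the right half of a cotorsion pair whose left class $\class{C}\cap\class{W}$ is closed under kernels of admissible epics, i.e. the pair is hereditary: $\class{C}\cap\class{W} = \leftperp{(\class{F}\cup\class{R})}$ is resolving because both $\class{C}$ and $\class{W}$ are.
\end{itemize}
If $\class{C}$ is not resolving, I would instead argue directly. Given $N\in\class{C}\cap\class{W}$, apply $\Ext(N,-)$ to $R\rightarrowtail F\twoheadrightarrow M$. Since $\Ext^1(N,F)=0$, this reduces to showing $\Ext^2_{\class{E}}(N,R)=0$. That holds because $N\in\class{W}$ and $(\class{W},\class{R})$ is hereditary, being an injective cotorsion pair with $\class{W}$ thick.
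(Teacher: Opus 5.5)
Your proof is correct and is essentially the paper's argument. The paper uses the same four steps:
\begin{enumerate}
\item identifying $(\class{C}\cap\class{W},\rightperp{(\leftperp{(\class{F}\cup\class{R})})})$ via Lemma~\ref{lemma-cots};
\item the splitting argument giving $\class{W}\cap\rightperp{(\leftperp{(\class{F}\cup\class{R})})}=\class{F}$;
\item the kernel of the composite $C\twoheadrightarrow W\twoheadrightarrow A$ for enough projectives;
\item the $\Ext^2_{\class{E}}(N,R)=0$ argument for (3)$\Rightarrow$(4).
\end{enumerate}

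There are two minor variations. First, you obtain enough injectives by Salce's trick, which uses enough injectives in $\class{A}$ and the fact that injectives lie in the fibrant class. The paper instead pulls back $M\rightarrowtail R\twoheadrightarrow W$ along a special $\class{C}$-precover $F\rightarrowtail C\twoheadrightarrow W$, so it needs only the completeness of the two given pairs. Second, you check the converse (that $\class{W}\cap\class{R}$ is the class of injectives) by hand, where the paper cites the book.

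One correction: your first route for (3)$\Rightarrow$(1), which treats $\class{C}\cap\class{W}$ as resolving, is not available here. The pair $(\class{C},\class{F})$ is not assumed hereditary. By Theorem~\ref{theorem-fibrant-objects}, $\mathfrak{M}$ is hereditary exactly when $(\class{C},\class{F})$ is, and Section~\ref{section-non-hereditary} applies the theorem precisely to non-hereditary pairs. So your fallback $\Ext^2$ argument is the one the proof must use, and it is exactly the paper's.
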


\begin{remark}
Note that the class $\rightperp{(\leftperp{(\class{F}\cup\class{R})})}$ of fibrant objects in the above Hovey triple is the smallest one possible containing all of the objects of both $\class{F}$ and $\class{R}$.  In the case that $\mathfrak{M}$ is hereditary we will see a nicer characterization of the fibrant objects   in Theorem~\ref{theorem-fibrant-objects}.
\end{remark}

\begin{proof}
($\impliedby$) Assume that  $\mathfrak{M}$ is a  Hovey triple. Then, by definition, $\class{W}$ is   thick and we must have $\class{F} = \rightperp{\class{C}} = \class{W}\cap  \rightperp{(\leftperp{(\class{F}\cup\class{R})})}$. In particular, $\class{F} \subseteq \class{W}$. Moreover, since $\class{F}$ must contain all injective objects, so does $\class{W}$. It follows easily, see~\cite[Proposition~2.21]{gillespie-book}(4), that  $(\class{W},\class{R})$ is an injective cotorsion pair. 

\noindent  ($\implies$)
Assume that $(\class{W},\class{R})$ is an injective cotorsion pair with $\class{F} \subseteq \class{W}$. 
By Lemma~\ref{lemma-cots}(1),  $\class{F}\cup\class{R}$ generates the  cotorsion pair, 
$(\class{C}\cap \class{W}, \rightperp{(\leftperp{(\class{F}\cup\class{R})})})$. 
Next we will show $\class{W}\cap  \rightperp{(\leftperp{(\class{F}\cup\class{R})})} = \class{F}$.
 First, the containment $\class{F} \subseteq \class{W}\cap \rightperp{(\leftperp{(\class{F}\cup\class{R})})}$ follows from the fact that both $\class{F} \subseteq \class{W}$ and $\class{F} \subseteq \class{F}\cup\class{R} \subseteq \rightperp{(\leftperp{(\class{F}\cup\class{R})})}$. To prove the reverse,  let $Z \in \class{W}\cap\rightperp{(\leftperp{(\class{F}\cup\class{R})})}$.  Since $(\class{C},\class{F})$ is a complete cotorsion pair, we may write a short exact sequence $Z \rightarrowtail F \twoheadrightarrow C$ where $F\in\class{F}$ and $C\in\class{C}$. Since $F, Z \in \class{W}$ and $\class{W}$ is thick,  we get  $C\in\class{W}$. Thus $C \in \class{C}\cap \class{W}  =  \leftperp{(\class{F}\cup\class{R})}$. As  $Z \in\rightperp{(\leftperp{(\class{F}\cup\class{R})})}$,  it follows that the previous short exact sequence splits. This means that  $Z$ is a direct summand of $F$ and hence $Z\in\class{F}$ too, completing the proof that $\class{W}\cap\rightperp{(\leftperp{(\class{F}\cup\class{R})})} = \class{F}$. 
 
 We can show directly that $(\class{C}\cap \class{W}, \rightperp{(\leftperp{(\class{F}\cup\class{R})})})$ is complete. To show it has enough injectives, let $M$ be given. We construct a pullback diagram
$$\begin{tikzcd}
 & F \arrow[equal]{r} \arrow[d, tail] & F \arrow[tail]{d} \\
M \arrow[tail]{r} \arrow[equal]{d} & P \arrow[two heads]{r} \arrow[two heads]{d} & C \arrow[two heads]{d} \\
M \arrow[tail]{r}  & R \arrow[two heads]{r} & W
\end{tikzcd}$$
as follows: The bottom row is obtained by using that $(\class{W},\class{R})$ has enough injectives, while the right column is obtained by using that $(\class{C},\class{F})$ has enough projectives. Then since $\class{F}\subseteq \class{W}$, we have $F,W\in\class{W}$, and hence $C \in \class{C}\cap\class{W}$.  We also have $R \in \class{R} \subseteq \rightperp{(\leftperp{(\class{F}\cup\class{R})})}$ and also $F \in \class{F} \subseteq \rightperp{(\leftperp{(\class{F}\cup\class{R})})}$, hence $P \in  \rightperp{(\leftperp{(\class{F}\cup\class{R})})}$.
 So the middle row of the diagram shows that our cotorsion pair has enough injectives. 
 
 A slightly different argument shows $(\class{C}\cap \class{W}, \rightperp{(\leftperp{(\class{F}\cup\class{R})})})$ has enough projectives. For this, we construct a pullback diagram
$$\begin{tikzcd}
F \arrow[equal]{r} \arrow[tail]{d} & F \arrow[tail]{d} &  \\
P \arrow[tail]{r} \arrow[two heads]{d} & C \arrow[two heads]{r} \arrow[two heads]{d} & M \arrow[equal]{d} \\
R \arrow[tail]{r} & W \arrow[two heads]{r} & M
\end{tikzcd}$$
as follows: The bottom row is obtained by using that $(\class{W},\class{R})$ has enough projectives, while the middle column is obtained by using that $(\class{C},\class{F})$ has enough projectives. Then again we can argue that $C \in \class{C}\cap\class{W}$,  and $P \in  \rightperp{(\leftperp{(\class{F}\cup\class{R})})}$, so that the middle row proves that our cotorsion pair has enough projectives. 

Since $\class{W}$ was already handed to us as a thick class, this completes the proof that $\mathfrak{M}$ is a Hovey triple.  
Lastly, we prove the stated characterizations of the fibrant objects:

\noindent (1)$\implies$(2). Let $M \in \rightperp{(\leftperp{(\class{F}\cup\class{R})})}$. Because $(\class{W},\class{R})$ is a complete cotorsion pair, we may  write a special $\class{W}$-precover sequence, $R \rightarrowtail W \twoheadrightarrow M$, with $W \in \class{W}$ and $R\in\class{R}\subseteq \rightperp{(\leftperp{(\class{F}\cup\class{R})})}$. Then $W\in \class{W}\cap \rightperp{(\leftperp{(\class{F}\cup\class{R})})} = \class{F}$.

\noindent (2)$\implies$(3) is trivial. 

\noindent (3)$\implies$(4).  Let $N\in \class{C}\cap \class{W}$. Then applying $\Hom_{\class{A}}(N,-)$ to such a given short exact sequence yields an exact sequence of abelian groups
$$\Ext^1_{\class{E}}(N,F) \xrightarrow{} \Ext^1_{\class{E}}(N,M) \xrightarrow{} \Ext^2_{\class{E}}(N,R).$$
We have $\Ext^1_{\class{E}}(N,F)=0$, since $N\in \class{C}$, and  $\Ext^2_{\class{E}}(N,R)=0$, since $N\in \class{W}$ and since  $(\class{W},\class{R})$ is hereditary, by~\cite[Corollary 2.17]{gillespie-book}. Therefore, $\Ext^1_{\class{E}}(N,M) =0$ for all $N\in \class{C}\cap \class{W}$.

\noindent (4)$\implies$(1). We already know that $\rightperp{(\class{C}\cap \class{W})}  =  \rightperp{(\leftperp{(\class{F}\cup\class{R})})}$.
\end{proof}

We now show that, in the  hereditary case, an object $M$ is fibrant in $\mathfrak{M}$ if and only if  $M=Z_{0}X$ for some exact $\Hom_{\class{A}}(\class{C}\cap\class{W}, -)$-acyclic  complex $X$ with each  $X_n \in \class{F}$.

\begin{theorem}[Hereditary condition and fibrant objects]\label{theorem-fibrant-objects}
The Hovey triple $\mathfrak{M} = (\class{C}, \class{W}, \rightperp{(\leftperp{(\class{F}\cup\class{R})})})$   of Theorem~\ref{theorem-cots-models}   is hereditary if and only if  $(\class{C},\class{F})$ is hereditary. In this case, the equivalent statements (1)--(4) characterizing  the fibrant objects  are also equivalent to these statements:
\begin{enumerate}\setcounter{enumi}{4}
\item There is a $\Hom_{\class{A}}(\class{C}\cap \class{W},-)$-exact short exact sequence  $M \rightarrowtail R \twoheadrightarrow F$ with $F \in \class{F}$ and $R\in\class{R}$.
\item  $M=Z_{0}X$ for some exact  complex $X$ with each  $X_n \in \class{F}$ and such that $\Hom_{\class{A}}(N,X)$ remains exact for any $N\in\class{C}\cap\class{W}$.
\end{enumerate}
\end{theorem}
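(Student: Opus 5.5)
The proof has three parts: the hereditary equivalence, the equivalence of (5) with (1)--(4), and the equivalence of (6) with (1)--(4). Throughout, write $\class{Z} := \rightperp{(\leftperp{(\class{F}\cup\class{R})})}$ for the fibrant class; by Theorem~\ref{theorem-cots-models} we have $\class{W}\cap\class{Z}=\class{F}$ and $\class{Z} = \rightperp{(\class{C}\cap\class{W})}$.

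\emph{Hereditary equivalence.} A Hovey triple is hereditary when both of its cotorsion pairs, here $(\class{C},\class{F})$ and $(\class{C}\cap\class{W},\class{Z})$, are hereditary. So one direction is immediate. For the converse, assume $(\class{C},\class{F})$ is hereditary. I will use the standard criterion for complete cotorsion pairs in exact categories (as in \cite{gillespie-book}). It says such a pair is hereditary if and only if its left class is closed under kernels of admissible epimorphisms, or dually if its right class is closed under cokernels of admissible monomorphisms. Suppose $A \rightarrowtail B \twoheadrightarrow C'$ has $B, C' \in \class{C}\cap\class{W}$. Then $A\in\class{C}$ by heredity of $(\class{C},\class{F})$, and $A\in\class{W}$ because $\class{W}$ is thick. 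Hence $(\class{C}\cap\class{W},\class{Z})$ is hereditary. The main obstacle is making sure this criterion applies in the present generality: it needs enough projectives or injectives in $(\class{A},\class{E})$, or else completeness of the pairs. If it does not apply directly, I would instead show the right class $\class{Z}$ is closed under cokernels of admissible monics. For this I would use a dimension-shifting argument with $\Ext^2_{\class{E}}(N,-)$, exploiting $\Ext^2_{\class{E}}(N,R)=0$ and $\Ext^2_{\class{E}}(N,F)=0$ for $N\in\class{C}\cap\class{W}$.

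\emph{Characterization (5).} For (1)$\Rightarrow$(5), take an enough-injectives sequence $M\rightarrowtail R \twoheadrightarrow W'$ for the pair $(\class{W},\class{R})$, with $W'\in\class{W}$. Since $M$ and $R$ lie in $\class{Z}$ and $\class{Z}$ is closed under cokernels of admissible monics (by heredity), $W'\in\class{W}\cap\class{Z}=\class{F}$. For $N\in\class{C}\cap\class{W}$, the sequence is $\Hom_{\class{A}}(N,-)$-exact because $\Ext^1_{\class{E}}(N,M)=0$ by (4). For (5)$\Rightarrow$(4), apply $\Hom_{\class{A}}(N,-)$ to the given sequence. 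Since $N \in \class{W}$, we have $\Ext^1_{\class{E}}(N,R)=0$, so $\Ext^1_{\class{E}}(N,M)$ is the cokernel of $\Hom_{\class{A}}(N,R)\to\Hom_{\class{A}}(N,F)$. This map is surjective by hypothesis, so the cokernel is zero.

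\emph{Characterization (6).} For (1)$\Rightarrow$(6), I build $X$ by splicing short exact sequences whose terms other than $M$ are fibrant.
\begin{itemize}
\item Leftward: by (3), or by (2), there is $R_0\rightarrowtail F_0\twoheadrightarrow M$ with $F_0 \in \class{F}$ and $R_0 \in \class{R}\subseteq\class{Z}$. Iterating on $R_0$ produces the part of $X$ in positive degrees.
\item Rightward: use enough injectives of $(\class{C},\class{F})$ to get $M\rightarrowtail F^0\twoheadrightarrow C^0$. Here $C^0\in\class{Z}$ by closure of $\class{Z}$ under cokernels. Iterating on $C^0$ produces the part of $X$ in negative degrees.
\end{itemize}
Every cycle of the spliced complex $X$ lies in $\class{Z}=\rightperp{(\class{C}\cap\class{W})}$. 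So each short exact piece stays exact under $\Hom_{\class{A}}(N,-)$ for $N\in\class{C}\cap\class{W}$, and hence $\Hom_{\class{A}}(N,X)$ is exact.

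For (6)$\Rightarrow$(4), fix $N\in\class{C}\cap\class{W}$ and consider $Z_0X\rightarrowtail X_0\twoheadrightarrow Z_{-1}X$. Exactness of $\Hom_{\class{A}}(N,X)$ at degree $0$ says every map $N\to Z_{-1}X$ lifts to $X_0$. We also have $\Ext^1_{\class{E}}(N,X_0)=0$ because $N\in\class{C}$ and $X_0\in\class{F}$. The long exact sequence then forces $\Ext^1_{\class{E}}(N,M)=0$.
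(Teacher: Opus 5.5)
Your proposal is correct and follows essentially the same route as the paper. You get heredity from $\class{C}\cap\class{W}$ being closed under kernels of admissible epimorphisms, which uses thickness of $\class{W}$; the criterion from \cite{gillespie-book} needs only completeness of the cotorsion pairs, which Theorem~\ref{theorem-cots-models} already supplies, so your fallback is unnecessary. You then prove (1)$\Rightarrow$(5)$\Rightarrow$(4) and (1)$\Rightarrow$(6)$\Rightarrow$(4) exactly as the paper does, apart from one harmless indexing slip: lifting maps $N\to Z_{-1}X$ through $X_0$ is exactness of $\Hom_{\class{A}}(N,X)$ at degree $-1$, not degree $0$.
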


\begin{proof}
Since $\class{W}$ is thick,  $\class{C}\cap\class{W}$ will be projectively resolving  whenever $\class{C}$ is projectively resolving. So the Hovey triple $\mathfrak{M} = (\class{C}, \class{W}, \rightperp{(\leftperp{(\class{F}\cup\class{R})})})$ is hereditary if and only if $(\class{C},\class{F})$ is hereditary; see~\cite[Corollary 2.17(2)]{gillespie-book}. 

For the remainder of the proof we  assume that $\mathfrak{M}$ is hereditary. We need to  prove statements (5) and (6) are also equivalent to the previous statements (1)--(4). 

\noindent (5)$\implies$(4). 
Assume there is a $\Hom_{\class{A}}(\class{C}\cap \class{W},-)$-exact short exact sequence $M \rightarrowtail R \twoheadrightarrow F$ with $F \in \class{F}$ and $R\in\class{R}$. Then given any $N\in \class{C}\cap \class{W}$, we apply $\Hom_{\class{A}}(N,-)$ and use that $\Ext^1_{\class{E}}(N,R)=0$ to conclude that $\Ext^1_{\class{E}}(N,M)=0$ too.

\noindent (1)$\implies$(5).  Assume $M \in \rightperp{(\leftperp{(\class{F}\cup\class{R})})}$, and use 
that $(\class{W}, \class{R})$ has enough injectives to write  a short exact sequence $M \rightarrowtail R \twoheadrightarrow W$   with $R\in  \class{R}$ and $W\in\class{W}$.  This short exact sequence is indeed $\Hom_{\class{A}}(\class{C}\cap \class{W},-)$-exact   because  $\class{C}\cap \class{W} = \leftperp{(\class{F}\cup\class{R})}$. Moreover, since $\mathfrak{M}$ is hereditary, we get $W \in \rightperp{(\leftperp{(\class{F}\cup\class{R})})}$. Therefore we have $W\in  \class{W}\cap\rightperp{(\leftperp{(\class{F}\cup\class{R})})} = \class{F}$. 

Now we show that statements (1)--(5) imply (6).
First, using (3), let $R \rightarrowtail X_1 \twoheadrightarrow M$ be a short exact sequence having $X_1 \in \class{F}$ and $R\in\class{R}$. Then $\Ext^1_{\class{E}}(N,R) = 0$ for any $N\in\class{W}$, so the short exact sequence remains exact after applying the functor $\Hom_{\class{A}}(N,-)$ for any $N\in\class{W}$. Since $R\in\class{R} \subseteq \rightperp{(\leftperp{(\class{F}\cup\class{R})})}$, the equivalent statements (1)--(5) apply to $R$ too. So  we may iterate the construction to obtain  an exact complex
  $$\cdots \xrightarrow{} X_3\xrightarrow{} X_2\xrightarrow{} X_1  \twoheadrightarrow M$$ with each $X_n \in \class{F}$  which remains exact upon applying the functor $\Hom_{\class{A}}(N,-)$ for any $N\in\class{W}$, so in particular  for any $N\in\class{C}\cap\class{W}$. On the other hand, by  repeatedly using that $(\class{C}, \class{F})$ has enough injectives,  we may construct an exact complex 
  $$M \rightarrowtail X_{0}\xrightarrow{} X_{-1}\xrightarrow{} X_{-2}  \xrightarrow{} \cdots$$ having  each $X_n \in \class{F}$. 
 By the hereditary condition, we get that each cokernel of the complex is again in $\rightperp{(\leftperp{(\class{F}\cup\class{R})})}$. In particular, 
the complex will remain exact upon applying   $\Hom_{\class{A}}(N,-)$ for any $N\in\class{C}\cap\class{W}$.  This competes the proof that statements (1)--(5) imply (6).

To finish,  we show that (6) implies (4). So assume that $M=Z_{0}X$ for some exact  complex $X$ with each  $X_n \in \class{F}$ and such that $\Hom_{\class{A}}(N,X)$ remains exact for all $N\in\class{C}\cap\class{W}$.  Letting $N\in \class{C}\cap \class{W}$, we need to show $\Ext^1_{\class{E}}(N,M)=0$. Applying $\Hom_{\class{A}}(N,-)$ to the short exact sequence $M = Z_0X\rightarrowtail X_0 \twoheadrightarrow Z_{-1}X$ yields another short exact sequence. Moreover, since $X_0\in \class{F} \subseteq  \rightperp{(\leftperp{(\class{F}\cup\class{R})})} = \rightperp{(\class{C}\cap\class{W})}$, we have
$\Ext^1_{\class{E}}(N, X_0) =0$, and so we get an exact sequence of abelian groups
$$0 \xrightarrow{} \Hom_{\class{A}}(N, M) \xrightarrow{} \Hom_{\class{A}}(N, X_0) \xrightarrow{} \Hom_{\class{A}}(N, Z_{-1}X) \xrightarrow{} \Ext^1_{\class{E}}(N, M) \xrightarrow{} 0$$ 
with the homomorphism $\Hom_{\class{A}}(N, Z_{-1}X) \xrightarrow{} \Ext^1_{\class{E}}(N, M)$ being the zero map. 
It follows that  $\Ext^1_{\class{E}}(N, M)=0$, proving statement (4). 
\end{proof}

\subsection{Projective duals}\label{subsec-projective-duals} We have constructed statements and proofs above so that they  allow for the following duals.

\begin{theorem}[New models from projective ones]\label{theorem-cots-models-proj}
Let $(\class{Q},\class{W})$ and  $(\class{C},\class{F})$  both be complete cotorsion pairs in an exact category $(\class{A},\class{E})$ having enough projectives.
Then $(\class{Q},\class{W})$ is an projective cotorsion pair with $\class{C}\subseteq \class{W}$  if and only if $\class{C}\cup\class{Q}$ cogenerates a Hovey triple,
$$\mathfrak{M} = (\leftperp{(\rightperp{(\class{C}\cup\class{Q})})}, \class{W}, \class{F}).$$
Moreover, the following statements are equivalent and characterize the class   of cofibrant objects: 
\begin{enumerate}
\item $M \in \leftperp{(\rightperp{(\class{C}\cup\class{Q})})}$.
\item Any special $\class{W}$-preenvelope of $M$ is in $\class{C}$.
\item There is a short exact sequence $M \rightarrowtail C \twoheadrightarrow Q$ with $C \in \class{C}$ and $Q\in\class{Q}$.
\item $\Ext^1_{\class{E}}(M,N)=0$ for all modules $N\in \class{W}\cap \class{F}$.
\end{enumerate}
\end{theorem}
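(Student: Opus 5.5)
The plan is to dualize, step by step, the proof of Theorem~\ref{theorem-cots-models}, using Lemma~\ref{lemma-cots}(2) in place of Lemma~\ref{lemma-cots}(1).

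\emph{The ($\impliedby$) direction.} Suppose $\mathfrak{M}$ is a Hovey triple. Then $\class{W}$ is thick. Also $\class{C} = \leftperp{\class{F}} = \leftperp{(\rightperp{(\class{C}\cup\class{Q})})}\cap\class{W}$, so $\class{C}\subseteq\class{W}$. Since $\class{C}$ contains all projectives, so does $\class{W}$. The dual of \cite[Proposition~2.21]{gillespie-book}(4) then shows that $(\class{Q},\class{W})$ is a projective cotorsion pair.

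\emph{The ($\implies$) direction: the cotorsion pair.} By Lemma~\ref{lemma-cots}(2), $\class{C}\cup\class{Q}$ cogenerates the cotorsion pair
$$(\leftperp{(\rightperp{(\class{C}\cup\class{Q})})},\ \class{W}\cap\class{F}).$$

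\emph{The key identity.} Next I show $\leftperp{(\rightperp{(\class{C}\cup\class{Q})})}\cap\class{W}=\class{C}$. The inclusion $\supseteq$ holds because $\class{C}\subseteq\class{W}$. For the reverse, take $Z$ in the intersection and use enough projectives of $(\class{C},\class{F})$ to get $F\rightarrowtail C\twoheadrightarrow Z$. Thickness of $\class{W}$ puts $F\in\class{W}\cap\class{F}$, so the sequence splits and $Z$ is a summand of $C$, hence $Z\in\class{C}$.

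\emph{Completeness.} I build the two pushout diagrams dual to those in the proof of Theorem~\ref{theorem-cots-models}.
\begin{itemize}
\item For enough projectives of the new pair, given $M$, take $Q\rightarrowtail ? $ no: take $W\rightarrowtail Q\twoheadrightarrow M$ from $(\class{Q},\class{W})$, and $W\rightarrowtail F\twoheadrightarrow C$ from enough injectives of $(\class{C},\class{F})$. Push out along $W\to F$ to obtain $F\rightarrowtail P\twoheadrightarrow M$ with $F\in\class{W}\cap\class{F}$. The object $P$ sits in $Q\rightarrowtail P\twoheadrightarrow C$, so $P$ lies in the left class, since that class is closed under extensions.
\item For enough injectives, given $M$, take $M\rightarrowtail W\twoheadrightarrow Q$, then $W\rightarrowtail F\twoheadrightarrow C$, and push out. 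This gives $M\rightarrowtail F\twoheadrightarrow P$ with $P$ an extension of $Q$ by $C$.
\end{itemize}
Combined with thickness of $\class{W}$, this yields the Hovey triple.

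\emph{The characterizations of cofibrant objects.}
\begin{itemize}
\item (1)$\Rightarrow$(2): a special $\class{W}$-preenvelope $M\rightarrowtail W\twoheadrightarrow Q$ has $W$ an extension of $M$ and $Q$. Both lie in the left class, so $W$ lies in it too, and the key identity gives $W\in\class{C}$.
\item (2)$\Rightarrow$(3): immediate.
\item (3)$\Rightarrow$(4): apply $\Hom(-,N)$ for $N\in\class{W}\cap\class{F}$. This gives $\Ext^1(C,N)\to\Ext^1(M,N)\to\Ext^2(Q,N)$. The first term vanishes since $N\in\class{F}$. The last vanishes since the projective cotorsion pair $(\class{Q},\class{W})$ is hereditary, by \cite[Corollary 2.17]{gillespie-book}.
\item (4)$\Rightarrow$(1): this is just $\leftperp{(\class{W}\cap\class{F})}=\leftperp{(\rightperp{(\class{C}\cup\class{Q})})}$.
\end{itemize}

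\emph{Main obstacle.} The only real care is getting the pushout diagrams oriented correctly so that both the left class and $\class{W}\cap\class{F}$ land where needed, and invoking the hereditary property of projective cotorsion pairs in the exact-category setting.
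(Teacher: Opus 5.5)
Your proposal is correct, and it is exactly the step-by-step dualization of the proof of Theorem~\ref{theorem-cots-models} that the paper intends: the paper gives no separate argument for this theorem, only noting that the statements and proofs dualize, and you replace pullbacks by pushouts and Lemma~\ref{lemma-cots}(1) by Lemma~\ref{lemma-cots}(2). Only cosmetic fixes are needed: delete the stray false start in the first completeness bullet, and state explicitly that $F\in\class{W}$ there by thickness, since $W$ and $C$ both lie in $\class{W}$.
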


\begin{theorem}[Hereditary condition and cofibrant objects]\label{theorem-cofibrant-objects}
The Hovey triple $\mathfrak{M} = (\leftperp{(\rightperp{(\class{C}\cup\class{Q})})}, \class{W}, \class{F})$    of Theorem~\ref{theorem-cots-models-proj}   is hereditary if and only if  $(\class{C},\class{F})$ is hereditary. In this case, the equivalent statements (1)--(4) characterizing  the cofibrant objects  are also equivalent to these statements:
\begin{enumerate}\setcounter{enumi}{4}
\item There is a $\Hom_{\class{A}}(-, \class{W}\cap \class{F})$-exact short exact sequence  $C \rightarrowtail Q \twoheadrightarrow M$ with $C \in \class{C}$ and $Q\in\class{Q}$.
\item  $M=Z_{0}X$ for some exact  complex $X$ with each  $X_n \in \class{C}$ and such that $\Hom_{\class{A}}(X, N)$ remains exact for any $N\in\class{W}\cap\class{F}$.
\end{enumerate}
\end{theorem}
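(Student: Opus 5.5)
This is the exact dual of Theorem~\ref{theorem-fibrant-objects}, so I would dualize that proof step by step. For the hereditary part, I would use that $\class{W}$ is thick. Then $\class{W}\cap\class{F}$ is injectively coresolving whenever $\class{F}$ is. Since $\class{C}$ being projectively resolving is equivalent to $\class{F}$ being injectively coresolving, \cite[Corollary 2.17]{gillespie-book} gives that $\mathfrak{M}$ is hereditary if and only if $(\class{C},\class{F})$ is. From Theorem~\ref{theorem-cots-models-proj} and Lemma~\ref{lemma-cots}(2), I would record the two identities used throughout:
\[
\rightperp{(\class{C}\cup\class{Q})}=\class{W}\cap\class{F},
\qquad
\class{W}\cap\leftperp{(\rightperp{(\class{C}\cup\class{Q})})}=\class{C}.
\]

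For (5)$\Rightarrow$(4), let $N\in\class{W}\cap\class{F}$ and apply $\Hom_{\class{A}}(-,N)$ to $C\rightarrowtail Q\twoheadrightarrow M$. The restriction map $\Hom(Q,N)\to\Hom(C,N)$ is surjective by hypothesis, and $\Ext^1_{\class{E}}(Q,N)=0$. Hence $\Ext^1_{\class{E}}(M,N)=0$.

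For (1)$\Rightarrow$(5), I would use that $(\class{Q},\class{W})$ has enough projectives to get $W\rightarrowtail Q\twoheadrightarrow M$. This sequence is $\Hom(-,\class{W}\cap\class{F})$-exact because $\Ext^1_{\class{E}}(M,\class{W}\cap\class{F})=0$. Heredity of $\mathfrak{M}$ puts $W$ in the cofibrant class, since $Q$ and $M$ are cofibrant. So $W$ lies in $\class{W}\cap\leftperp{(\rightperp{(\class{C}\cup\class{Q})})}=\class{C}$.

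For (1)--(5)$\Rightarrow$(6), I build the two halves of $X$ separately. For the right half, statement (3) gives $M\rightarrowtail X_{-1}\twoheadrightarrow Q$ with $X_{-1}\in\class{C}$. Since $\Ext^1(Q,\class{W})=0$, this sequence is $\Hom(-,\class{W})$-exact. Also $Q$ is again cofibrant, so I iterate on $Q$ to get $M\rightarrowtail X_{-1}\to X_{-2}\to\cdots$. For the left half, I repeatedly take special $\class{C}$-precovers, giving $\cdots\to X_1\to X_0\twoheadrightarrow M$ with kernels in $\class{F}$. By heredity these kernels are also cofibrant: $\class{C}$ is cofibrant, and the cofibrant class is closed under kernels of epimorphisms. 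They therefore lie in $\leftperp{(\class{W}\cap\class{F})}$, which gives $\Hom(-,\class{W}\cap\class{F})$-exactness. Splicing the halves gives the required $X$; the only indexing change is to reindex so that $M=Z_0X$.

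For (6)$\Rightarrow$(4), take the short exact sequence $Z_1X\rightarrowtail X_1\twoheadrightarrow M$ with $M=Z_0X$. Apply $\Hom(-,N)$ for $N\in\class{W}\cap\class{F}$ and use $\Ext^1(X_1,N)=0$. The resulting connecting map $\Hom(Z_1X,N)\to\Ext^1(M,N)$ is surjective. It is also zero, because $\Hom(X,N)$ is exact, so every map $Z_1X\to N$ extends to $X_1$. Hence $\Ext^1(M,N)=0$.

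The main point of care is the left half of the complex in (6). There I must make sure the kernels are cofibrant, so that $\Hom(-,\class{W}\cap\class{F})$-exactness holds; this uses heredity exactly as the cokernels did in Theorem~\ref{theorem-fibrant-objects}.
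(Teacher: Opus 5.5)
Your proposal is correct and is exactly the dualization the paper intends. The paper gives no separate proof of Theorem~\ref{theorem-cofibrant-objects} and relies on dualizing the proof of Theorem~\ref{theorem-fibrant-objects}. Your steps are precisely those duals: the coresolving argument for $\class{W}\cap\class{F}$, the special $\class{Q}$-precover for (1)$\Rightarrow$(5), the two half-complexes built from (3) and from special $\class{C}$-precovers, and the connecting-map argument for (6)$\Rightarrow$(4).
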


\begin{example}\label{example-flat-cot}
Let $R$ be a ring. 
Take $(\class{Q},\class{W})$ in Theorem~\ref{theorem-cots-models-proj} to be the projective cotorsion pair  where $\class{Q}$ is the class of all \emph{Gorenstein AC-projective} $R$-modules, from \cite[Section~8]{bravo-gillespie-hovey}.  Take $(\class{C},\class{F})$ to be the complete hereditary cotorsion pair where $\class{C}$ is the class of all \emph{level} $R$-modules,  from \cite[Section~2]{bravo-gillespie-hovey}.  The hypotheses of Theorems~\ref{theorem-cots-models-proj}/\ref{theorem-cofibrant-objects} are met and we get a (cofibrantly generated) abelian model structure, 
$$\mathfrak{M} = (\leftperp{(\rightperp{(\class{C}\cup\class{Q})})}, \class{W}, \class{F}).$$
A module $M$ is cofibrant in $\mathfrak{M}$ if and only if 
 $M=Z_{0}L$ for some exact  complex $L$ of level modules such that $\Hom_{\class{A}}(L, N)$ remains exact for any $N\in\class{W}\cap\class{F}$.

 In the case that $R$ is (right) coherent, the level (left) modules coincide with the flat modules. Moreover,  the Gorenstein AC-projectives in this case are precisely the \emph{Ding projective} modules, or equivalently, the \emph{PGF-modules}. It follows from  \cite[Theorem~4.11(3)]{saroch-stovicek-G-flat} that $\leftperp{(\rightperp{(\class{C}\cup\class{Q})})}$ equals the class of all Gorenstein flat modules.  
 In particular, an exact complex $X$ of flat modules is \emph{F-totally acyclic} (i.e. $I\tensor_R X$ is exact for all injective right $R$-modules) if and only if $\Hom_R(X,N)$ remains exact for all cotorsion modules in $\class{W}$. 
 \end{example}


\section{Existence of non-hereditary models for derived categories}
\label{section-non-hereditary}

This brief stand alone section is an application of Theorem~\ref{theorem-cots-models}, showing the existence of non-hereditary model structures for the derived category of  a ring $R$.  Throughout this section, we let $\rmod$ denote the category of (left) $R$-modules over a ring $R$, and $\ch$ denote the corresponding category of chain complexes. 

In~\cite{gillespie} we described a recipe for lifting cotorsion pairs on $\rmod$ to cotorsion pairs yielding potential model structures on $\ch$. Using the notation there, any cotorsion pair  $(\class{A},\class{B})$ in $\rmod$ lifts to two cotorsion pairs, $(\dgclass{A},\tilclass{B})$ and $(\tilclass{A},\dgclass{B})$, in $\ch$. Here, $\tilclass{A}$  (resp. $\tilclass{B}$) is the class of all exact complexes with cycles in $\class{A}$ (resp. $\class{B}$). The other two classes are, essentially by definition, their respective Ext-orthogonal classes. 
Letting $\class{E}$ denote the class of all exact complexes, it is shown in~\cite[Corollary~3.13]{gillespie} that the  cotorsion pair  $(\class{A},\class{B})$ is hereditary if and only if $\dgclass{A}\cap\class{E} = \tilclass{A}$ and/or $\class{E}\cap\dgclass{B} = \tilclass{B}$. 
In this case, if the cotorsion pairs  $(\dgclass{A},\tilclass{B})$ and $(\tilclass{A},\dgclass{B})$ are complete, then they are the two cotorsion pairs corresponding to  a single abelian model structure, namely $(\dgclass{A}, \class{E}, \dgclass{B})$, on $\ch$. The following result shows that, in the non-hereditary case,  each of these cotorsion pairs is actually part of a \emph{different} abelian model structure. 

In what follows,  $\dgclass{P}$ denotes the well-known class of all \emph{DG-projective} chain complexes, while $\dgclass{I}$ denotes the dual class of all  \emph{DG-injective} complexes. 

\begin{theorem}[Non-hereditary models for derived categories]\label{theorem-non-hereditary-derived-models}
Let $R$ be a ring and $(\class{A},\class{B})$ be a  (not necessarily hereditary)  cotorsion pair of $R$-modules, cogenerated by a set. 
Then $(\dgclass{A},\tilclass{B})$ and $(\tilclass{A},\dgclass{B})$ are both complete cotorsion pairs in $\ch$,  and each is part of a cofibrantly generated abelian model structure for $\class{D}(R)$, the derived category of $R$:
$$\textnormal{(i)} \ \ (\dgclass{A}, \class{E}, \rightperp{(\leftperp{(\tilclass{B}\cup\dgclass{I})})})   \ \ \ \ \ \ \ \ \textnormal{(ii)} \ \  (\leftperp{(\rightperp{(\tilclass{A}\cup\dgclass{P})})}, \class{E},\dgclass{B}).$$
The model structures in $\textnormal{(i)}$ and $\textnormal{(ii)}$ are equal  if and only if $(\class{A},\class{B})$ is hereditary, in which case they coincide with the usual Hovey triple,  $(\dgclass{A}, \class{E},\dgclass{B})$. 
\end{theorem}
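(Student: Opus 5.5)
Part (i) will come from Theorem~\ref{theorem-cots-models} and part (ii) from its dual, Theorem~\ref{theorem-cots-models-proj}, both applied in the abelian category $\ch$. For (i) I take the injective cotorsion pair $(\class{W},\class{R}) = (\class{E},\dgclass{I})$ and the second pair $(\class{C},\class{F}) = (\dgclass{A},\tilclass{B})$. For (ii) I take the projective cotorsion pair $(\class{Q},\class{W}) = (\dgclass{P},\class{E})$ and the second pair $(\class{C},\class{F}) = (\tilclass{A},\dgclass{B})$.

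The pairs $(\class{E},\dgclass{I})$ and $(\dgclass{P},\class{E})$ are the standard injective and projective cotorsion pairs for $\class{D}(R)$. They are complete and cogenerated by sets, and $\class{E}$ is thick. The containment hypotheses are immediate: Theorem~\ref{theorem-cots-models} needs $\tilclass{B}\subseteq\class{E}$, and Theorem~\ref{theorem-cots-models-proj} needs $\tilclass{A}\subseteq\class{E}$. Both hold by definition, since $\tilclass{A}$ and $\tilclass{B}$ consist of exact complexes.

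The main work is showing that $(\dgclass{A},\tilclass{B})$ and $(\tilclass{A},\dgclass{B})$ are cotorsion pairs, complete and cogenerated by sets, without using heredity. I would check that the arguments of~\cite{gillespie} for these facts do not use heredity; heredity there enters only in comparing $\dgclass{A}\cap\class{E}$ with $\tilclass{A}$. Let $\class{S}$ be a set cogenerating $(\class{A},\class{B})$, which I may assume contains $R$ (the pair is unchanged, and this makes $\class{A}$ generating). Then the spheres and discs $S^n(S)$, $D^n(S)$ for $S\in\class{S}$ should cogenerate $(\dgclass{A},\tilclass{B})$. Indeed, $\Ext^1_{\ch}(D^n(S),X)=0$ for all $n$ forces $X$ to be exact, and given exactness, $\Ext^1_{\ch}(S^n(S),X)\cong\Ext^1_R(S,Z_{n-1}X)$ (up to index conventions), so the Ext-vanishing against all $S^n(S)$ says the cycles lie in $\class{B}$. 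For $(\tilclass{A},\dgclass{B})$ I would use a deconstructibility argument: $\class{A}$ is deconstructible because it is the left half of a pair cogenerated by a set, and hence $\tilclass{A}$ is deconstructible. Completeness then follows from the Eklof--Trlifaj theorem in Grothendieck categories with enough projectives, here $\ch$. I expect this second pair to be the main obstacle and would cite~\cite{gillespie} or the deconstructibility literature for it.

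With these inputs, Theorems~\ref{theorem-cots-models} and~\ref{theorem-cots-models-proj} give the Hovey triples (i) and (ii). Both have trivial class $\class{E}$, so their homotopy categories are $\class{D}(R)$. Cofibrant generation follows because all four cotorsion pairs involved are cogenerated by sets; the generated pair of Lemma~\ref{lemma-cots} is then cogenerated by the union of the two sets, and similarly in the dual case.

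For the final claim, suppose the two model structures are equal. Comparing cofibrant objects gives $\dgclass{A} = \leftperp{(\rightperp{(\tilclass{A}\cup\dgclass{P})})}$. Intersecting with $\class{E}$ and using the trivially cofibrant class of (ii), which is $\tilclass{A}$ by Theorem~\ref{theorem-cots-models-proj} (the left half of $(\tilclass{A},\dgclass{B})$), gives $\dgclass{A}\cap\class{E}=\tilclass{A}$. By~\cite[Corollary~3.13]{gillespie} this means $(\class{A},\class{B})$ is hereditary. Conversely, if $(\class{A},\class{B})$ is hereditary, then $\dgclass{A}\cap\class{E}=\tilclass{A}$ and $\class{E}\cap\dgclass{B}=\tilclass{B}$, so $(\dgclass{A},\class{E},\dgclass{B})$ is a Hovey triple. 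A Hovey triple is determined by its two cotorsion pairs. Model (i) has trivially fibrant pair $(\dgclass{A}\cap\class{E}, \rightperp{(\leftperp{(\tilclass{B}\cup\dgclass{I})})}) = (\tilclass{A},\dgclass{B})$ and cofibrant pair $(\dgclass{A},\tilclass{B})$. Hence (i) coincides with $(\dgclass{A},\class{E},\dgclass{B})$, and likewise (ii).
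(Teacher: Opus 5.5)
Your overall route is the paper's. You make the same two applications of Theorems~\ref{theorem-cots-models} and~\ref{theorem-cots-models-proj}, use the same sources for cogeneration of $(\dgclass{A},\tilclass{B})$ and $(\tilclass{A},\dgclass{B})$, and reduce the last claim to \cite[Corollary~3.13]{gillespie} in the same way.

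\textbf{The gap: cofibrant generation of model (i).} Its trivially cofibrant pair is $(\dgclass{A}\cap\class{E}, \rightperp{(\leftperp{(\tilclass{B}\cup\dgclass{I})})})$, the pair \emph{generated} by $\tilclass{B}\cup\dgclass{I}$ as in Lemma~\ref{lemma-cots}(1). Let $\class{S}_1$ and $\class{S}_2$ cogenerate $(\dgclass{A},\tilclass{B})$ and $(\class{E},\dgclass{I})$. Then $\rightperp{(\class{S}_1\cup\class{S}_2)} = \tilclass{B}\cap\dgclass{I}$, and this is just the class of injective complexes. So the union cogenerates the pair of Lemma~\ref{lemma-cots}(2), here the trivial pair (all complexes, injective complexes). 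It tells you nothing about $\rightperp{(\dgclass{A}\cap\class{E})}$.

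Cogeneration by a set passes to the pair whose right half is $\class{Y}\cap\class{Y}'$, not to the pair whose left half is $\class{X}\cap\class{X}'$, so the two cases are not dual to each other here. Your union argument is correct for (ii): there $\rightperp{(\class{T}_1\cup\class{T}_2)} = \dgclass{B}\cap\class{E}$, which really is the right half of the cofibrant pair. It fails for (i).

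The missing ingredient for (i) is that $\dgclass{A}\cap\class{E}$, an intersection of two deconstructible classes, is deconstructible \cite[Prop.~2.9(1)]{stovicek-deconstructible}. It is closed under direct summands and transfinite extensions and contains the projective generators $D^n(R)$. Hence it equals $\leftperp{(\rightperp{\class{T}})}$ for a set $\class{T}$. This is exactly how the paper handles $\class{C}\cap\class{W}_{\class{B}}$ in Corollary~\ref{cor-modelstruc}. The same closure observation is what turns deconstructibility of $\tilclass{A}$ into cogeneration of $(\tilclass{A},\dgclass{B})$, a step your sketch leaves implicit.

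\textbf{A smaller slip.} You have $\Ext^1_{\ch}(D^n(S),X)\cong\Ext^1_R(S,X_n)$. So vanishing against discs only forces $X_n\in\class{B}$; it never forces exactness, since $D^n(R)$ is projective. Exactness comes from the spheres on $R$, because $\Ext^1_{\ch}(S^n(R),X)\cong H_{n-1}X$. Given exactness, the spheres $S^n(S)$ then put the cycles in $\class{B}$. Since you adjoined $R$ to $\class{S}$, your set does cogenerate $(\dgclass{A},\tilclass{B})$, but for this reason, and the discs are redundant.
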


\begin{proof}
As already noted, the  two cotorsion pairs $(\dgclass{A},\tilclass{B})$ and $(\tilclass{A},\dgclass{B})$ exist. Since $(\class{A},\class{B})$ is cogenerated by a set, it follows that $(\dgclass{A},\tilclass{B})$  is also cogenerated by a set; see \cite[Prop. 3.8]{gillespie-quasi-coherent} or \cite[Prop. 10.37]{gillespie-book}. On the other hand, it follows from~\cite[Prop.~1.7/Prop.~2.9(1)/Them.~4.2(1)]{stovicek-deconstructible} that $(\tilclass{A},\dgclass{B})$ is also cogenerated by a set. So the two cotorsion pairs are complete. 
It is also well known that $(\dgclass{P},\class{E})$ is a projective cotorsion pair, while $(\class{E},\dgclass{I})$ is an injective  cotorsion pair (and again each is cogenerated by a set). So by 
Theorem~\ref{theorem-cots-models} we get an abelian model structure, 
$ (\dgclass{A}, \class{E}, \rightperp{(\leftperp{(\tilclass{B}\cup\dgclass{I})})})$, and by Theorem~\ref{theorem-cots-models-proj}  we get another abelian model structure,  $(\leftperp{(\rightperp{(\tilclass{A}\cup\dgclass{P})})}, \class{E},\dgclass{B})$.
Since we have equality of the cotorsion pairs,
 $$(\dgclass{A},\tilclass{B}) = (\dgclass{A}, \class{E}\cap\rightperp{(\leftperp{(\tilclass{B}\cup\dgclass{I})})}),$$ and 
 $$(\tilclass{A}, \dgclass{B}) =  (\leftperp{(\rightperp{(\tilclass{A}\cup\dgclass{P})})}\cap\class{E},\dgclass{B}),$$
 we deduce that the model structures in (i) and (ii) are equal if and only if 
 $$\tilclass{B} = \class{E}\cap\dgclass{B}   \text{\ \ \ \ and\ \ \ \ } \tilclass{A} = \dgclass{A}\cap\class{E}.$$
But by  \cite[Corollary~3.13]{gillespie}, this is the case if and only if  the  cotorsion pair  $(\class{A},\class{B})$ is hereditary.
\end{proof}
 
One can check that $\rightperp{(\leftperp{(\tilclass{B}\cup\dgclass{I})})}$ is precisely the class of all chain complexes $X$  having each $X_n\in\class{B}$ and with the property that any chain map $A \xrightarrow{} X$ with $A \in \dgclass{A}\cap\class{E}$   is null homotopic. 
The analogous characterization holds for  $\leftperp{(\rightperp{(\tilclass{A}\cup\dgclass{P})})}$.
 
\begin{example}
Probably the most well-known non-hereditary cotorsion pair involves the class of FP-injective modules over a non-coherent ring. In more detail, let $\class{S}$ be a set of isomorphism representatives for the class of all finitely presented  $R$-modules. $\class{S}$ cogenerates a complete cotorsion pair which we denote by $(\class{FP},\class{FI})$. The modules in $\class{FI}$ are precisely the \emph{FP-injective} modules, while the modules in $\class{FP}$ are  called \emph{FP-projective}. It is known that  $R$ is  (left) coherent if and only if $(\class{FP},\class{FI})$ is hereditary; see~\cite[Appendix~B]{stovicek-purity}. So the classes $\dgclass{PF}$ and $\dgclass{FI}$ are always the (co)fibrant objects of abelian models for the derived category, but they are two distinct model structures whenever $R$ is not (left) coherent. 
\end{example}


\section{Bifibrant objects and  totally acyclic complexes}\label{subsec-tech-acyclic}

To prepare for our second application of Theorems~\ref{theorem-cots-models} and~\ref{theorem-fibrant-objects}, we  return to the general setting of Section~\ref{section-merging-models}. So throughout this section, we assume $(\cat{A},\class{E})$ is an exact category with enough injectives,  $(\class{C},\class{F})$ is a complete hereditary  cotorsion pair, and  $(\class{W},\class{R})$ is an injective cotorsion pair with $\class{F}\subseteq \class{W}$. Then $\mathfrak{M} = (\class{C}, \class{W}, \rightperp{(\leftperp{(\class{F}\cup\class{R})})})$ denotes the corresponding Hovey triple of Theorems~\ref{theorem-cots-models} and~\ref{theorem-fibrant-objects}. Our goal here is to record some general conditions, to be applied later, guaranteeing a particularly nice description of the homotopy category, $\textnormal{Ho}(\mathfrak{M})$. This description  appears in Theorem~\ref{theorem-stable-CF} and  uses the following terminology. 


\begin{definition}\label{def-Goren-CF}
We set the following terminology for exact complexes  having components in $\class{C}\cap\class{F}$, the core of the cotorsion pair $(\class{C}, \class{F})$. 
\begin{enumerate} 
\item By a \textbf{\emph{totally acyclic complex of  $(\boldsymbol{\class{C}\cap\class{F}})$-objects}} we mean an $\class{E}$-exact complex $X$ having each $X_n \in \class{C}\cap\class{F}$ and satisfying that both $\Hom_{\cat{A}}(N,X)$ and $\Hom_{\cat{A}}(X,N)$ are exact complexes of abelian groups whenever $N \in \class{C}\cap\class{F}$. 
\item By a \textbf{\emph{Gorenstein $(\boldsymbol{\class{C}\cap\class{F}})$-object}}
 we mean an object $M = Z_0X$ equaling  the 0-cycle  of some 
totally acyclic complex of  $(\class{C}\cap\class{F})$-objects.
\end{enumerate}
We let $\class{GCF}$ denote the class of all Gorenstein $(\class{C}\cap\class{F})$-objects. 
\end{definition}

Note that the following proposition immediately implies that all bifibrant objects of $\mathfrak{M}$ are in the class $\class{GCF}$  of  all Gorenstein $(\class{C}\cap\class{F})$-objects. 

\begin{proposition}\label{prop-bifibrant-acyclicity}
An object $M$ is bifibrant, meaning $M \in \class{C} \cap \rightperp{(\leftperp{(\class{F}\cup\class{R})})}$, if and only if, 
$M=Z_{0}X$ for some $\class{E}$-exact  complex $X$ satisfying that each  $X_n \in \class{C}\cap\class{F}$, and  $\Hom_{\class{A}}(N,X)$ is an exact complex of abelian groups for any $N\in\class{C}\cap\class{W}$, and also  $\Hom_{\class{A}}(X,N)$ is an exact complex of abelian groups for any $N\in\class{F}$.
\end{proposition}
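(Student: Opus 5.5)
Write $\class{Z} := \rightperp{(\leftperp{(\class{F}\cup\class{R})})}$. The plan is to prove the two directions separately, using Theorem~\ref{theorem-fibrant-objects}(6) for the fibrant half and its dual for the cofibrant half. Throughout we use that $\mathfrak{M}$ is hereditary, which holds because $(\class{C},\class{F})$ is.

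\emph{Forward direction.} Let $M \in \class{C}\cap\class{Z}$.

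First build the right half of $X$. Since $(\class{C},\class{F})$ has enough injectives, choose $M \rightarrowtail X_0 \twoheadrightarrow C'$ with $X_0\in\class{F}$ and $C'\in\class{C}$. Then $X_0 \in \class{C}$, since $\class{C}$ is closed under extensions, and $C' \in \class{Z}$, by the hereditary condition. So $C'$ is again bifibrant, and iterating gives
$$M \rightarrowtail X_0 \to X_{-1} \to \cdots$$
with every $X_n\in\class{C}\cap\class{F}$ and every cycle bifibrant.

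Next build the left half. Use (3) of Theorem~\ref{theorem-cots-models}: there is $R \rightarrowtail F \twoheadrightarrow M$ with $F\in\class{F}$ and $R\in\class{R}$. We need $F\in\class{C}$ as well, so instead take a special $\class{C}$-precover $F'' \rightarrowtail C \twoheadrightarrow M$ with $F''\in\class{F}$. Since $\class{C}$ is closed under extensions, $C\in\class{C}$. Since $\class{F}\subseteq\class{Z}$ and $\class{Z}$ is closed under extensions, $C\in\class{Z}$. Hence
$$C \in \class{C}\cap\class{W}\cap\class{Z}\cap\class{W}.$$
To see $C\in\class{W}$: $M$ is fibrant, so $M\in\class{W}$ is not automatic. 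Instead we need $X_1\in\class{F}=\class{W}\cap\class{Z}$. The better route is to apply Theorem~\ref{theorem-fibrant-objects}(2) to the special $\class{W}$-precover $R\rightarrowtail W\twoheadrightarrow M$, which gives $W\in\class{F}$. Then take a special $\class{C}$-precover of $W$, namely $F'\rightarrowtail C_W\twoheadrightarrow W$, and compose or pull back so that the middle object lies in $\class{C}\cap\class{F}$. Concretely, form the pullback of $C_W\twoheadrightarrow W$ along nothing; just use $C_W\twoheadrightarrow W\twoheadrightarrow M$, whose kernel $K$ is an extension of $R$ by $F'$. Then $C_W\in\class{C}\cap\class{F}$, because $\class{F}$ is closed under extensions. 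The kernel $K$ lies in $\class{Z}$. It lies in $\class{C}$ because $M,C_W\in\class{C}$ and $\class{C}$ is resolving. So $K$ is again bifibrant and we can iterate to the left.

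With the complex in hand, check the two Hom conditions.
\begin{itemize}
\item For $N\in\class{C}\cap\class{W}$, exactness of $\Hom(N,X)$ follows because every cycle lies in $\class{Z}=\rightperp{(\class{C}\cap\class{W})}$.
\item For $N\in\class{F}$, exactness of $\Hom(X,N)$ follows because every cycle lies in $\class{C}=\leftperp{\class{F}}$.
\end{itemize}

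\emph{Converse.} Suppose $X$ is as in the statement. Then $M\in\class{Z}$ by Theorem~\ref{theorem-fibrant-objects}(6)$\Rightarrow$(1), since each $X_n\in\class{F}$. For $M\in\class{C}$, dualize the argument of (6)$\Rightarrow$(4). Apply $\Hom(-,N)$ with $N\in\class{F}$ to $Z_1X\rightarrowtail X_1\twoheadrightarrow M$. Since $\Ext^1(X_1,N)=0$ and $\Hom(X,N)$ is exact, the connecting map $\Hom(Z_1X,N)\to\Ext^1(M,N)$ is both surjective and zero. Hence $\Ext^1(M,N)=0$, i.e.\ $M\in\leftperp{\class{F}}=\class{C}$.

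The main obstacle is the left half of the construction: producing the epimorphism onto $M$ from an object of $\class{C}\cap\class{F}$ whose kernel is again bifibrant. The plan is to handle it by combining Theorem~\ref{theorem-fibrant-objects}(2) with a special $\class{C}$-precover, as above.
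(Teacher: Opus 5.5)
Your proposal is correct and is essentially the paper's proof. The converse is the same dual of (6)$\Rightarrow$(4), and the right half is built from $(\class{C},\class{F})$ having enough injectives, as in the paper. The left half you finally settle on is exactly the paper's pullback construction: take the special $\class{W}$-precover $R\rightarrowtail W\twoheadrightarrow M$ with $W\in\class{F}$, then a special $\class{C}$-precover $F'\rightarrowtail C_W\twoheadrightarrow W$; the kernel of the composite $C_W\twoheadrightarrow M$ is an extension of $R$ by $F'$, hence lies in $\class{C}\cap\rightperp{(\leftperp{(\class{F}\cup\class{R})})}$. You should delete the abandoned first attempt via a special $\class{C}$-precover of $M$, including its unjustified membership claim for $C$, since the argument you actually use does not depend on it.
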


\begin{proof}
\noindent ($\impliedby$)  Suppose $M=Z_{0}X$ for such an exact complex $X$. Then we certainly have  $M \in \rightperp{(\leftperp{(\class{F}\cup\class{R})})}$, by Theorem~\ref{theorem-fibrant-objects}(6).
To show $M \in \class{C}$, we let $N\in \class{F}
$, and we  show $\Ext^1_{\class{E}}(M,N)=0$. By our assumption, applying $\Hom_{\class{A}}(-, N)$ to the short exact sequence $Z_1X\rightarrowtail X_1 \twoheadrightarrow M$ yields another short exact sequence. Moreover, since $X_1\in \class{C} =  \leftperp{\class{F}}$, we have
$\Ext^1_{\class{E}}(X_1, N) =0$, and so we get an exact sequence of abelian groups
$$0 \xrightarrow{} \Hom_{\class{A}}(M, N) \xrightarrow{} \Hom_{\class{A}}(X_1, N) \xrightarrow{} \Hom_{\class{A}}(Z_{1}X, N) \xrightarrow{} \Ext^1_{\class{E}}(M, N) \xrightarrow{} 0$$ 
with the homomorphism $\Hom_{\class{A}}(Z_{1}X, N) \xrightarrow{} \Ext^1_{\class{E}}(M, N)$ being the zero map. 
It follows that  $\Ext^1_{\class{E}}(M, N)=0$, proving $M\in\class{C}$. Therefore,  $M \in \class{C} \cap \rightperp{(\leftperp{(\class{F}\cup\class{R})})}$.

\noindent ($\implies$) We return  again to  the proof of Theorem~\ref{theorem-fibrant-objects} and follow the argument given  there showing part (6).
It starts by taking a short exact sequence  $R \rightarrowtail X_1 \twoheadrightarrow M$  having $X_1 \in \class{F}$ and $R\in\class{R}$.   We now also use that $(\class{C}, \class{F})$ has enough projectives to write another short exact sequence 
 $F \rightarrowtail X'_1 \twoheadrightarrow X_1$  having $X'_1 \in \class{C}$ and $F\in\class{F}$.  
 Note that $X'_1$ is necessarily in $\class{C}\cap\class{F}$. 
Form the pullback diagram:
$$\begin{tikzcd}
F \arrow[equal]{r} \arrow[tail]{d} & F \arrow[tail]{d} &  \\
R' \arrow[tail]{r} \arrow[two heads]{d} & X'_1 \arrow[two heads]{r} \arrow[two heads]{d} & M \arrow[equal]{d} \\
R \arrow[tail]{r} & X_1 \arrow[two heads]{r} & M.
\end{tikzcd}$$
Since $F\in\class{F}$ and $R\in \class{R}$, we get that the extension, $R'$, is in the class
$\rightperp{(\leftperp{(\class{F}\cup\class{R})})}$. Moreover, since $M, X'_1 \in\class{C}$ and $(\class{C}, \class{F})$ is hereditary, we get  $R' \in\class{C} \cap\rightperp{(\leftperp{(\class{F}\cup\class{R})})}$. 
Noting that the middle row of the diagram  has  $R' \in\class{C} \cap\rightperp{(\leftperp{(\class{F}\cup\class{R})})}$ and $X'_1\in\class{C}\cap\class{F}$, we can iterate this construction to obtain  an exact complex
  $$\cdots \xrightarrow{} X'_3\xrightarrow{} X'_2\xrightarrow{} X'_1  \twoheadrightarrow M$$ with each $X'_n \in \class{C}\cap\class{F}$, and with each cokernel in $\class{C}\cap\rightperp{(\leftperp{(\class{F}\cup\class{R})})}$.  In particular, the complex remains exact upon  applying  $\Hom_{\cat{A}}(N, -)$ for any $N\in\class{C}\cap\class{W}$, and upon applying  $\Hom_{\cat{A}}(-, N)$ for any $N\in\class{F}$.
      On the other hand, since $M\in\class{C}\cap\rightperp{(\leftperp{(\class{F}\cup\class{R})})}$, by using that $(\class{C}, \class{F})$ has enough injectives, the construction of the exact complex
  $$M \rightarrowtail X_{0}\xrightarrow{} X_{-1}\xrightarrow{} X_{-2}  \xrightarrow{} \cdots,$$ will again have each $X_n \in \class{C}\cap\class{F}$, and, by the hereditary condition, each cokernel will again be in $\class{C}\cap\rightperp{(\leftperp{(\class{F}\cup\class{R})})}$.
 So  again, the complex remains exact upon  applying  $\Hom_{\cat{A}}(N, -)$ for any $N\in\class{C}\cap\class{W}$, and upon  applying  $\Hom_{\cat{A}}(-, N)$ for any $N\in\class{F}$.
Pasting the two complexes together at $M$  we obtain the desired exact complex $X$.
\end{proof} 

In light of the fact that $(\class{C}, \class{F})$ is a complete hereditary cotorsion pair, it makes good sense to define  the  \emph{$\class{F}$-dimension} of an object $N$ as follows: If there is a non-negative integer $n$ and an exact sequence 
$$N  \rightarrowtail F^0 \xrightarrow{} F^1 \xrightarrow{} \cdots  \xrightarrow{} F^{n-1} \twoheadrightarrow F^n$$
with each $F^i \in \class{F}$, then we say that $N$ has  \textbf{finite $\class{F}$-dimension}, and write $\class{F}\textnormal{-dim}(N) \leq n$. If $d$ is the least integer for which such an exact sequence exists,  then this is the  \textbf{$\class{F}$-dimension} of $N$, written $\class{F}\textnormal{-dim}(N) = d$.   If no such integer $n$ exists then we write $\class{F}\textnormal{-dim}(N) = \infty$. 
Using standard dimension shifting arguments  one can use the hereditary condition to check that $\class{F}\textnormal{-dim}(N) \leq n<\infty$ implies that $\Ext^{n+i}_{\class{E}}(C,N) = 0$ for all $i\geq 1$ and all $C\in\class{C}$. We use this in the proof of the next lemma.

\begin{lemma}\label{lemma-CF-dimensions}
An object $N\in\class{C}\cap\class{W}$  has finite $\class{F}$-dimension,  $\class{F}\textnormal{-dim}(N) \leq n$,  if and only if, there exists an exact sequence 
$$N  \rightarrowtail F^0 \xrightarrow{} F^1 \xrightarrow{} \cdots  \xrightarrow{} F^{n-1} \twoheadrightarrow F^n$$
with each $F^i \in \class{C}\cap\class{F}$. 
\end{lemma}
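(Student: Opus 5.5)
The direction ($\impliedby$) is trivial: a resolution by objects of $\class{C}\cap\class{F}$ is in particular a resolution by objects of $\class{F}$. For ($\implies$), the plan is to start from an arbitrary $\class{F}$-resolution of $N$ of length $n$ and replace it by one whose terms lie in $\class{C}\cap\class{F}$. We build it step by step from the left, using that $(\class{C},\class{F})$ has enough injectives. At each stage we keep track of the cokernels, requiring them to lie in $\class{C}\cap\class{W}$. At the final step we use the Ext-vanishing bound $\Ext^{n+i}_{\class{E}}(C,N)=0$ for $C\in\class{C}$ to see that the last cokernel lands in $\class{F}$.

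Concretely, set $K^0 := N\in\class{C}\cap\class{W}$. Using that $(\class{C},\class{F})$ has enough injectives, choose a short exact sequence $K^0\rightarrowtail F^0\twoheadrightarrow K^1$ with $F^0\in\class{F}$ and $K^1\in\class{C}$. Since $\class{C}$ is closed under extensions, $F^0\in\class{C}\cap\class{F}$. Since $\class{F}\subseteq\class{W}$ and $\class{W}$ is thick, $K^1\in\class{W}$, so $K^1\in\class{C}\cap\class{W}$. Iterating $n$ times produces an exact sequence
$$N\rightarrowtail F^0\xrightarrow{}\cdots\xrightarrow{}F^{n-1}\twoheadrightarrow K^n$$
with each $F^i\in\class{C}\cap\class{F}$ and $K^n\in\class{C}\cap\class{W}$. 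It remains to show $K^n\in\class{F}$, since then $F^n:=K^n\in\class{C}\cap\class{F}$.

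For this final step, take any $C\in\class{C}$ and dimension shift along the short exact sequences $K^{j}\rightarrowtail F^{j}\twoheadrightarrow K^{j+1}$. Because $F^j\in\class{F}$ and $(\class{C},\class{F})$ is hereditary, we have $\Ext^{i}_{\class{E}}(C,F^j)=0$ for all $i\geq1$. Hence
$$\Ext^1_{\class{E}}(C,K^n)\cong\Ext^{2}_{\class{E}}(C,K^{n-1})\cong\cdots\cong\Ext^{n+1}_{\class{E}}(C,K^0)=\Ext^{n+1}_{\class{E}}(C,N).$$
By the remark preceding the lemma, $\class{F}\textnormal{-dim}(N)\leq n$ forces $\Ext^{n+1}_{\class{E}}(C,N)=0$. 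So $K^n\in\rightperp{\class{C}}=\class{F}$, as required.

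The main point needing care is that dimension-shifting step, specifically that the isomorphisms hold in the exact-category setting. They follow from the long exact Ext sequences together with the vanishing of higher Ext into objects of $\class{F}$, which is the hereditary hypothesis (as in \cite[Corollary 2.17]{gillespie-book}). The boundary case $n=0$ needs no construction: there $N\in\class{F}$ already, and $N\in\class{C}$ by hypothesis, so $N\in\class{C}\cap\class{F}$.
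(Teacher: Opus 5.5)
Your proof is correct and follows essentially the paper's route: you coresolve $N$ using that $(\class{C},\class{F})$ has enough injectives, note the terms lie in $\class{C}\cap\class{F}$ because $N\in\class{C}$ and $\class{C}$ is closed under extensions, and dimension-shift to put the $n$th cosyzygy in $\class{F}$. The one difference is in ($\impliedby$): the paper proves more than the literal statement, namely that any object admitting such a coresolution is automatically in $\class{C}\cap\class{W}$ (by thickness of $\class{W}$ and the hereditary condition), and this stronger form is what Proposition~\ref{prop-FP-dimensions} later uses.
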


\begin{proof}
($\implies$) We start by constructing a coresolution
$$N  \rightarrowtail F^0 \xrightarrow{} F^1 \xrightarrow{} F^2 \xrightarrow{} \cdots $$
of $N$ by successively using that the cotorsion pair $(\class{C},\class{F})$ has enough injectives. 
Since $N\in\class{C}$, each cycle (kernel) is in $\class{C}$ and hence each $F^i \in \class{C}\cap\class{F}$. 
If $\class{F}\textnormal{-dim}(N) \leq n$, then by the standard dimension shifting argument one can see that the $n$th cosyzygy  (cokernel) of the coresolution must be in $\class{F}$.  So it in $\class{C}\cap\class{F}$ and we are done.

\noindent ($\impliedby$) Suppose $N$ has such a coresolution. Then because $\class{F}\subseteq \class{W}$, and because $\class{W}$ satisfies the 2 out of 3 property, we get $N\in\class{W}$.  It is also  clear that  $N \in \class{C}$ since  the cotorsion pair $(\class{C}, \class{F})$ is hereditary.  
\end{proof}

\begin{lemma}\label{lemma-exact-complexes-CW-totally}
Assume each object of  $N\in\class{C}\cap\class{W}$  has finite $\class{F}$-dimension. Then the following are equivalent for a chain  complex $X$ having each $X_n \in  \class{F}$. 
\begin{enumerate}
\item $\Hom_{\cat{A}}(N,X)$ is exact for all $N \in \class{C}\cap\class{W}$.  
\item $\Hom_{\cat{A}}(N,X)$ is exact for all $N \in \class{C}\cap\class{F}$. 
\end{enumerate}
\end{lemma}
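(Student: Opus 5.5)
Since $\class{F}\subseteq\class{W}$, we have $\class{C}\cap\class{F}\subseteq\class{C}\cap\class{W}$, so (1)$\implies$(2) is immediate. For (2)$\implies$(1), I would fix $N\in\class{C}\cap\class{W}$ and show that $\Hom_{\cat{A}}(N,X)$ is exact by induction on $n$, where $\class{F}\textnormal{-dim}(N)\leq n$. By Lemma~\ref{lemma-CF-dimensions}, $N$ has a coresolution $N\rightarrowtail F^0\to\cdots\to F^n$ with every $F^i\in\class{C}\cap\class{F}$. If $n=0$, then $N\cong F^0\in\class{C}\cap\class{F}$ and (2) applies directly.

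For the inductive step, consider the short exact sequence $N\rightarrowtail F^0\twoheadrightarrow N'$. Its cokernel $N'$ is in $\class{C}$, since $(\class{C},\class{F})$ is hereditary. It is also in $\class{W}$, by thickness, since $N,F^0\in\class{W}$. Finally, the truncated coresolution gives $\class{F}\textnormal{-dim}(N')\leq n-1$ (again via Lemma~\ref{lemma-CF-dimensions}), so by induction $\Hom_{\cat{A}}(N',X)$ is exact.

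Apply $\Hom_{\cat{A}}(-,X)$ degreewise to $N\rightarrowtail F^0\twoheadrightarrow N'$. Since each $X_k\in\class{F}$ and $N'\in\class{C}=\leftperp{\class{F}}$, we have $\Ext^1_{\class{E}}(N',X_k)=0$. Hence each sequence
$$0\to\Hom_{\cat{A}}(N',X_k)\to\Hom_{\cat{A}}(F^0,X_k)\to\Hom_{\cat{A}}(N,X_k)\to 0$$
is exact. This yields a short exact sequence of complexes of abelian groups
$$0\to\Hom_{\cat{A}}(N',X)\to\Hom_{\cat{A}}(F^0,X)\to\Hom_{\cat{A}}(N,X)\to0.$$
The first two terms are exact, by induction and by (2) respectively, so the long exact homology sequence forces $\Hom_{\cat{A}}(N,X)$ to be exact, completing the induction.

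The one point needing care is the degreewise surjectivity, and that is exactly where the hypothesis $X_n\in\class{F}$ is used. The induction itself needs no dimension-shifting beyond Lemma~\ref{lemma-CF-dimensions}.
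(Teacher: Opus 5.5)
Your proof is correct and is essentially the paper's argument. The paper applies $\Hom_{\cat{A}}(-,X)$ to each short exact sequence $N^i\rightarrowtail F^i\twoheadrightarrow N^{i+1}$ of the coresolution from Lemma~\ref{lemma-CF-dimensions}, uses $\Ext^1_{\class{E}}(N^{i+1},X_n)=0$ to get short exact sequences of Hom complexes, and concludes by 2 out of 3; your induction on the $\class{F}$-dimension is the same argument unrolled.

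One small precision. $N'\in\class{C}$ holds because $N'$ has the finite coresolution $N'\rightarrowtail F^1\to\cdots\to F^n$ by objects of $\class{C}$, and $\class{C}$ is closed under kernels of admissible epimorphisms (argue downward from $F^n$). It does not follow from $N,F^0\in\class{C}$ alone, since heredity does not make $\class{C}$ closed under cokernels of admissible monomorphisms.
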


\begin{proof}
Certainly (1) implies (2) since $\class{F}\subseteq \class{W}$. To prove (2) implies (1), we use Lemma~\ref{lemma-CF-dimensions} to write an exact sequence 
$$N  \rightarrowtail F^0 \xrightarrow{} F^1 \xrightarrow{} \cdots  \xrightarrow{} F^{d-1} \twoheadrightarrow F^d$$
having each $F^i \in \class{C}\cap\class{F}$.   For each $i = 0, 1, \cdots , d-1$, set $N^i := \Ker(F^i \xrightarrow{} F^{i+1})$. Then each $N^i$ must be in $\class{C}$, because of the hereditary condition, and we have $N^0 = N$. Since $\Ext^1_{\class{E}}(N^i, X_n) = 0$, applying $\Hom_{\cat{A}}(-, X)$ to each short exact sequence $N^i  \rightarrowtail F^i \twoheadrightarrow N^{i+1}$ results in a short exact sequence of chain complexes of abelian groups
$$0 \xrightarrow{} \Hom_{\cat{A}}(N^{i+1}, X) \xrightarrow{} \Hom_{\cat{A}}(F^{i}, X) \xrightarrow{} \Hom_{\cat{A}}(N^{i}, X) \xrightarrow{} 0.$$ 
So since $\Hom_{\cat{A}}(F^{i}, X)$ is exact for any $F^i$, it follows from the 2 out of 3 property that $\Hom_{\cat{A}}(N^i, X)$ is exact for all $i = 0, 1, \cdots , d-1$.
\end{proof}


Recall that $\class{GCF}$ denotes the class of all Gorenstein $(\class{C}\cap\class{F})$-objects as spelled out  in Definition~\ref{def-Goren-CF}. 

\begin{theorem}\label{theorem-stable-CF}
Suppose that the Hovey triple $\mathfrak{M} = (\class{C}, \class{W}, \rightperp{(\leftperp{(\class{F}\cup\class{R})})})$   of Theorem~\ref{theorem-cots-models} exists,  and is hereditary, and that  these  two conditions hold:
\begin{enumerate}
\item  Each object $N\in\class{C}\cap\class{W}$  has finite $\class{F}$-dimension. 
\item Any $\class{E}$-exact chain complex $X$, having all  of its   components   $X_n\in\class{C}\cap\class{F}$, necessarily has each cycle $Z_nX\in\class{C}$. 
\end{enumerate}
Then we have $\class{C}\cap\rightperp{(\leftperp{(\class{F}\cup\class{R})})} = \class{GCF}$, the class of all Gorenstein $(\class{C}\cap\class{F})$-objects. Therefore, we may identify the homotopy category as 
 $$\textnormal{Ho}(\mathfrak{M}) \simeq \textnormal{St}(\class{GCF}),$$ 
 where $\textnormal{St}(\class{GCF})$ denotes the stable category of  the Frobenius category of all Gorenstein $(\class{C}\cap\class{F})$-objects. 
\end{theorem}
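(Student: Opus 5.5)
We prove the set equality $\class{C}\cap\rightperp{(\leftperp{(\class{F}\cup\class{R})})} = \class{GCF}$ by two inclusions, using Proposition~\ref{prop-bifibrant-acyclicity} and Lemma~\ref{lemma-exact-complexes-CW-totally}. Then we deduce the description of $\textnormal{Ho}(\mathfrak{M})$ from the standard fact that the bifibrant objects of a hereditary Hovey triple form a Frobenius category. Its projective-injective objects are the trivially bifibrant ones, namely $\class{C}\cap\class{W}\cap\rightperp{(\leftperp{(\class{F}\cup\class{R})})} = \class{C}\cap\class{F}$ by Theorem~\ref{theorem-cots-models}. Its stable category is equivalent to $\textnormal{Ho}(\mathfrak{M})$ (see~\cite{gillespie-book}). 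So once the class equality is established, $\textnormal{Ho}(\mathfrak{M})\simeq \textnormal{St}(\class{GCF})$ follows.

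For ``$\subseteq$'': let $M$ be bifibrant. By Proposition~\ref{prop-bifibrant-acyclicity}, $M=Z_0X$ for an $\class{E}$-exact complex $X$ with each $X_n\in\class{C}\cap\class{F}$. Moreover, $\Hom_{\cat{A}}(N,X)$ is exact for all $N\in\class{C}\cap\class{W}$ and $\Hom_{\cat{A}}(X,N)$ is exact for all $N\in\class{F}$. Since $\class{C}\cap\class{F}\subseteq\class{C}\cap\class{W}$ (because $\class{F}\subseteq\class{W}$) and $\class{C}\cap\class{F}\subseteq\class{F}$, $X$ is a totally acyclic complex of $(\class{C}\cap\class{F})$-objects. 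Hence $M\in\class{GCF}$. This direction needs neither hypothesis (1) nor (2).

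For ``$\supseteq$'': let $M=Z_0X$ with $X$ totally acyclic of $(\class{C}\cap\class{F})$-objects. By hypothesis (2), every cycle $Z_nX$ lies in $\class{C}$; in particular $M\in\class{C}$. For fibrancy, use criterion (6) of Theorem~\ref{theorem-fibrant-objects}. We need $\Hom_{\cat{A}}(N,X)$ exact for all $N\in\class{C}\cap\class{W}$, and we know it only for $N\in\class{C}\cap\class{F}$. Hypothesis (1) lets Lemma~\ref{lemma-exact-complexes-CW-totally} upgrade the latter to the former, since each $X_n\in\class{F}$. So $M\in\rightperp{(\leftperp{(\class{F}\cup\class{R})})}$, and $M$ is bifibrant.

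The main care point is the Frobenius/homotopy-category identification. We must confirm that the stable category of the bifibrant objects, taken modulo $\class{C}\cap\class{F}$, is the standard model of $\textnormal{Ho}(\mathfrak{M})$ for a hereditary Hovey triple on an exact category. Here the hereditary assumption guarantees that the bifibrant class is closed under extensions and has enough projectives and injectives, namely the objects of $\class{C}\cap\class{F}$. Given the cited general result, this is routine. The only genuinely new inputs are hypothesis (2), used to get the cycles into $\class{C}$, and hypothesis (1), used to pass from $\class{C}\cap\class{F}$-acyclicity to $\class{C}\cap\class{W}$-acyclicity.
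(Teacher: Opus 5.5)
Your proposal is correct and follows essentially the same argument as the paper. The inclusion $\subseteq$ goes through Proposition~\ref{prop-bifibrant-acyclicity}. The inclusion $\supseteq$ uses hypothesis (2) to get $M\in\class{C}$, and Lemma~\ref{lemma-exact-complexes-CW-totally} together with Theorem~\ref{theorem-fibrant-objects}(6) for fibrancy. The paper leaves the Frobenius identification $\textnormal{Ho}(\mathfrak{M})\simeq\textnormal{St}(\class{GCF})$ implicit, and your justification of it is accurate, with projective-injectives $\class{C}\cap\class{W}\cap\rightperp{(\leftperp{(\class{F}\cup\class{R})})}=\class{C}\cap\class{F}$.
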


\begin{remark}
Condition (2) can be  stated in the language of \emph{periodicity} as in~\cite{bazzoni-cortes-estrada} and~\cite{bazzoni-hrbek-positselski-fp-projective-periodicity}. 
\end{remark}

\begin{proof}
We just need to show  $\class{C}\cap\rightperp{(\leftperp{(\class{F}\cup\class{R})})} = \class{GCF}$.

\noindent ($\subseteq$) $M \in \class{C} \cap \rightperp{(\leftperp{(\class{F}\cup\class{R})})}$. Then  by Proposition~\ref{prop-bifibrant-acyclicity},
$M=Z_{0}X$ for some $\class{E}$-exact  complex $X$ satisfying that each  $X_n \in \class{C}\cap\class{F}$, and  $\Hom_{\class{A}}(N,X)$ is exact for any $N\in\class{C}\cap\class{W}$, and also  $\Hom_{\class{A}}(X,N)$ is exact for any $N\in\class{F}$. In particular, $X$ is a totally acyclic complex of $(\class{C}\cap\class{F})$-objects. So $M \in \class{GCF}$.

\noindent ($\supseteq$)  Suppose $M \in \class{GCF}$. Write $M=Z_0X$ for some totally acyclic complex $X$  of $(\class{C}\cap\class{F})$-objects.  By  the periodicity condition, i.e., assumption (2),  we get that $M \in \class{C}$. We also get by Lemma~\ref{lemma-exact-complexes-CW-totally}  that $\Hom_{\cat{A}}(N, X)$ is an exact complex for each $N\in\class{C}\cap\class{W}$.  Therefore, we also get $M\in \rightperp{(\leftperp{(\class{F}\cup\class{R})})}$, by Theorem~\ref{theorem-fibrant-objects}(6).
\end{proof}

One could again formulate duals of the results of this section.
 In particular, Theorem~\ref{theorem-stable-CF} has a dual.


\section{Weakly Gorenstein $\class{B}$-injective modules}\label{section-weakly-B}

Now we again let $R$ be a ring and let $\rmod$ denote the category of (left) $R$-modules. 
Throughout this section, \textbf{\emph{$\boldsymbol{\class{B}}$  denotes a class of $\boldsymbol{R}$-modules  that contains all of the injective modules}}. 
The following notion of a \emph{Gorenstein $\class{B}$-injective} module was studied in~\cite[Definition 16]{gillespie-iacob-duality-pairs}. The weaker variant below is based on more recent work of Iacob (\cite{iacob-weakly-Ding-1}, \cite{iacob-weakly-Ding-2}, and \cite{iacob-weakly-Ding-3}) in which she studies the \emph{weakly Ding injective} modules from~\cite{Goren-FP-inj}.

\begin{definition}\label{Defs-relative-G-inj}
As in Section~\ref{subsec-Hom-acyclicity}, we say that a chain complex $X$ of $R$-modules is \emph{$\Hom_R(\class{B},-)$-acyclic} if $\Hom_R(B,X)$ is an exact complex of abelian groups for all $B \in \class{B}$. If $X$ itself is also exact we say $X$ is an \emph{exact $\Hom_R(\class{B},-)$-acyclic} complex. 
\begin{enumerate}
\item An $R$-module $Z$ is \textbf{\emph{Gorenstein $\class{B}$-injective}} if  $Z=Z_{0}E$ for some exact $\Hom_R(\class{B},-)$-acyclic complex $E$ of $R$-modules with each $E_n$ injective.
\item An $R$-module $Z$ is \textbf{\emph{weakly Gorenstein $\class{B}$-injective}} if  $Z=Z_{0}E$ for some exact $\Hom_R(\class{B},-)$-acyclic complex  $E$ of $R$-modules with each  $E_n \in \class{B}$.
\end{enumerate}
\end{definition}

\begin{notation}\label{notation-relative-G-inj}
We let $\class{GI}_{\class{B}}$ denote the class of all Gorenstein $\class{B}$-injective $R$-modules, and we let $\textnormal{w}\class{GI}_{\class{B}}$  denote the class of all weakly Gorenstein $\class{B}$-injective $R$-modules. In general, we have $\class{GI}_{\class{B}} \subseteq \textnormal{w}\class{GI}_{\class{B}}$, and, $\class{B}\subseteq \textnormal{w}\class{GI}_{\class{B}}$. Note too that if $\class{GI}$ denotes the class of all the usual Gorenstein injectives, then we have $\class{GI}_{\class{B}} \subseteq \class{GI}$,   with equalities $\class{GI} = \class{GI}_{\class{B}} =  \textnormal{w}\class{GI}_{\class{B}}$ in the case that $\class{B}$ is nothing more than the class of all injective modules. 
\end{notation}

The next few lemmas are independent of one another, but all will be used to prove the main theorem of this section. All of the results and their  proofs below are based on similar results of Iacob that can be found througout~\cite{iacob-weakly-Ding-1, iacob-weakly-Ding-2, iacob-weakly-Ding-3}, but mainly \cite{iacob-weakly-Ding-3}.

\begin{lemma}\label{lemma-weakly-covers}
Assume $\class{B}$ is a covering class and closed under extensions. Let $L$ be a direct summand of a weakly Gorenstein $\class{B}$-injective module, $Z$. Then its $\class{B}$-cover  $\psi \colon C \xrightarrow{} L$ must be surjective with $\Ker{\psi} \in \rightperp{\class{B}}$. In particular, it's true for $L = Z$.
\end{lemma}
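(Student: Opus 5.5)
Write $Z = L\oplus L'$ and let $Z = Z_0E$ for an exact $\Hom_R(\class{B},-)$-acyclic complex $E$ with each $E_n\in\class{B}$. The argument has three steps: show that $\psi$ is surjective, then that $\Ext^1_R(B,\Ker\psi)=0$ for all $B\in\class{B}$, and finally note the case $L=Z$.

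\emph{Surjectivity.} The map $E_1\twoheadrightarrow Z_0E = Z$ is an epimorphism from an object of $\class{B}$. Composing it with the projection $Z\twoheadrightarrow L$ gives an epimorphism $E_1\twoheadrightarrow L$ with $E_1\in\class{B}$. This map factors through the $\class{B}$-cover $\psi$, since $\psi$ is in particular a $\class{B}$-precover. Hence $\psi$ is surjective.

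\emph{Vanishing of Ext.} Set $K:=\Ker\psi$ and fix $B\in\class{B}$ and an extension $0\to K\to Y\to B\to 0$. The plan is Wakamatsu-style: form the pushout of $K\rightarrowtail C$ along $K\to Y$. This gives a short exact sequence $0\to C\to P\to B\to 0$ and a map $P\to L$ extending $\psi$. Since $\class{B}$ is closed under extensions, $P\in\class{B}$. A diagram chase then shows that $K\to Y$ extends over $C$ if and only if the sequence $0\to C\to P\to B\to 0$ splits compatibly over $L$. So it suffices to show that $P\to L$ factors through $C$ in a way that yields a retraction.

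Here I would not use minimality directly. The standard Wakamatsu lemma requires closure under extensions plus the cover property, and gives $\Ext^1(B,K)=0$ only when applied appropriately. In fact Wakamatsu's lemma states exactly this: if $\class{B}$ is closed under extensions and $\psi\colon C\to L$ is a surjective $\class{B}$-cover, then $\Ker\psi\in\rightperp{\class{B}}$. Its proof is the pushout argument above, followed by showing that the induced endomorphism of $C$ is an automorphism because $\psi$ is a cover. Thus once surjectivity is established, I would invoke Wakamatsu's lemma (see~\cite{enochs-jenda-book} or~\cite{trlifaj-book}) and sketch only that pushout-plus-cover argument.

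\emph{Main obstacle.} The crux is surjectivity, which is exactly where the weakly Gorenstein hypothesis enters, through the epimorphism $E_1\twoheadrightarrow Z$ with $E_1\in\class{B}$. Notably, the $\Hom_R(\class{B},-)$-acyclicity of $E$ is not needed for this statement. The special case $L=Z$ is immediate, since $Z$ is a direct summand of itself.
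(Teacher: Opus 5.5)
Your proof is correct and is essentially the paper's argument. Surjectivity comes from lifting the epimorphism $E_1 \twoheadrightarrow Z \twoheadrightarrow L$ through the precover $\psi$, and $\Ker\psi\in\rightperp{\class{B}}$ is Wakamatsu's lemma, which the paper likewise simply cites. One small inconsistency: your pushout sketch does use the cover (minimality) property, despite your aside that you would not use it, but since you invoke the lemma anyway this does not affect correctness.
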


\begin{proof}
By the definition of $Z$, there is a surjection $e \colon E \xrightarrow{} Z$ with $E \in \class{B}$. Consider the composite of surjections, $E \xrightarrow{e} Z \xrightarrow{\pi} L$,   where $\pi$ is projection onto the summand $L$. It lifts over the cover, as $\pi e = \psi t$ for some $t$. Since $\pi e$ is surjective, so is $\psi$. Since $\class{B}$ is assumed to be closed under extensions, Wakamatsu's Lemma \cite[Corollary 7.2.3]{enochs-jenda-book} applies; it asserts that  $\Ker{\psi} \in \rightperp{\class{B}}$. 
(Our blanket assumption that $\class{B}$ contains all injectives is not needed.)
\end{proof}

\begin{lemma}\label{lemma-injectives}
Assume that $\class{B}$  is closed under cokernels of monomorphisms. Then the class 
$\class{B}\cap\rightperp{\class{B}}$ is equal to the class of all injective modules. 
\end{lemma}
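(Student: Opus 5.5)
The statement is Lemma~\ref{lemma-injectives}; we prove two inclusions, using the blanket assumption that $\class{B}$ contains all injective modules.

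\textbf{Injectives lie in $\class{B}\cap\rightperp{\class{B}}$.} If $I$ is injective, then $I\in\class{B}$ by assumption. Also $\Ext^1_R(B,I)=0$ for every module $B$, so in particular $I\in\rightperp{\class{B}}$.

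\textbf{Every $M\in\class{B}\cap\rightperp{\class{B}}$ is injective.} Embed $M$ in an injective module, giving a short exact sequence
$$0 \xrightarrow{} M \xrightarrow{} I \xrightarrow{} I/M \xrightarrow{} 0,$$
with $I$ injective. Then $M\in\class{B}$ and $I\in\class{B}$, and the map $M\to I$ is a monomorphism. Since $\class{B}$ is closed under cokernels of monomorphisms, $I/M\in\class{B}$. Because $M\in\rightperp{\class{B}}$, we get $\Ext^1_R(I/M,M)=0$. Hence the sequence splits, and $M$ is a direct summand of the injective module $I$. Therefore $M$ is injective.

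No step presents a real obstacle. The one point to check is how "closed under cokernels of monomorphisms" is meant: it must be read as "if $A\rightarrowtail B\twoheadrightarrow C$ is short exact with $A,B\in\class{B}$, then $C\in\class{B}$." That is exactly the form the argument uses. The rest is the standard splitting argument.
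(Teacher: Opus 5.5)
Your proof is correct and is essentially the paper's own argument. Injectives lie in $\class{B}\cap\rightperp{\class{B}}$ by the standing assumption that $\class{B}$ contains all injectives. For the converse you embed $M$ in an injective $I$, use closure under cokernels of monomorphisms to get $I/M\in\class{B}$, and conclude that the sequence splits because $\Ext^1_R(I/M,M)=0$.
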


\begin{proof}
Every injective module is in $\class{B}\cap\rightperp{\class{B}}$. Conversely, say $X\in\class{B}\cap\rightperp{\class{B}}$, and write a short exact sequence 
$$0\xrightarrow{} X  \xrightarrow{} I \xrightarrow{} I/X \xrightarrow{} 0$$ 
where $I$ is an injective module. The hypotheses imply that $I/X\in\class{B}$. So we get $\Ext^1_R(I/X,X)=0$. This means the short exact sequence must split and hence $X$, being a direct summand of $I$, is itself an injective module. 
\end{proof}

\begin{lemma}\label{lemma-cover-sums}
Assume we have a short exact sequence of $R$-modules 
$$0 \xrightarrow{} K \xrightarrow{}    C  \xrightarrow{\psi} L \xrightarrow{} 0
$$
where $\psi$ is a $\class{B}$-cover. Then  for any $B \in\class{B}$, we have a short exact sequence
$$0 \xrightarrow{} K \xrightarrow{}    C\Oplus B \xrightarrow{\psi \oplus1_B} L\Oplus B \xrightarrow{} 0
$$
and the map $\psi \oplus1_B \colon C\Oplus B \xrightarrow{} L\Oplus B$ is also a  $\class{B}$-cover with $\Ker{(\psi\oplus 1_B)} \cong K$.
\end{lemma}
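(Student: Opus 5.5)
Exactness of the displayed sequence is immediate: it is the direct sum of the given sequence $0 \to K \to C \xrightarrow{\psi} L \to 0$ with the trivially exact sequence $0 \to 0 \to B \xrightarrow{1_B} B \to 0$. Hence $\Ker(\psi\oplus 1_B) \cong K \oplus 0 \cong K$. The work is in showing that $\psi \oplus 1_B$ is a $\class{B}$-cover, which I would split into the precover property and the minimality (cover) property.

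For the precover property, I first note that $C \oplus B \in \class{B}$. I would justify this by closure of $\class{B}$ under finite direct sums; if this is not assumed outright, it follows from closure under extensions, which is in force in the setting of Lemma~\ref{lemma-weakly-covers} where this lemma is used. Next, let $B' \in \class{B}$ and $f = (f_1, f_2)^T \colon B' \to L \oplus B$. The map $f_1$ factors as $f_1 = \psi g$ because $\psi$ is a precover, and then $(g, f_2)^T \colon B' \to C \oplus B$ lifts $f$ through $\psi \oplus 1_B$.

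For minimality, let $h \colon C \oplus B \to C \oplus B$ satisfy $(\psi\oplus 1_B) h = \psi \oplus 1_B$; we must show that $h$ is an automorphism. Write $h$ as a matrix $\begin{pmatrix} h_{11} & h_{12} \\ h_{21} & h_{22}\end{pmatrix}$. The equation gives
\[ \psi h_{11} = \psi, \quad \psi h_{12} = 0, \quad h_{21} = 0, \quad h_{22} = 1_B. \]
Since $\psi$ is a cover, $h_{11}$ is an automorphism of $C$. So $h$ is upper triangular with invertible diagonal entries $h_{11}$ and $1_B$, and its inverse is $\begin{pmatrix} h_{11}^{-1} & -h_{11}^{-1}h_{12} \\ 0 & 1_B\end{pmatrix}$. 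Thus $h$ is an automorphism.

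The only step that is more than bookkeeping is the minimality argument. Its key point is that the condition forces $h_{21} = 0$ and $h_{22} = 1_B$, so triangularity of $h$ reduces the claim to the cover property of $\psi$. The main thing to check is that $\class{B}$ is closed under finite direct sums. I expect this to follow from the standing hypotheses (extension closure) under which the lemma is applied.
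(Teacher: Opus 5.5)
Your proposal is correct and follows essentially the same route as the paper: exactness as a direct sum with the sequence $0\to B\xrightarrow{1_B}B$, then the matrix computation forcing $h$ to be upper triangular with diagonal entries $h_{11}$ (an automorphism by the cover property of $\psi$) and $1_B$, inverted by the same explicit matrix. Your remark that $C\oplus B\in\class{B}$ requires closure of $\class{B}$ under finite direct sums makes explicit a detail the paper leaves under ``easy to see''; this closure is available in Theorem~\ref{theorem-direc-summand}, where the lemma is actually applied, because there $\class{B}$ is injectively resolving and hence extension closed.
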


\begin{proof}
The second sequence is just the direct sum of the first with the sequence $0 \xrightarrow{} B \xrightarrow{1_B} B$, so it is certainly a short exact sequence.  It is easy to see that $\psi\oplus 1_B$ is a $\class{B}$-precover. To check the automorphism requirement for a cover, assume  $f \colon C\Oplus B \xrightarrow{} C\Oplus B$ satisfies $(\psi\oplus 1_B)\circ f = \psi\oplus 1_B$. Note that these maps are  equivalent to matrices 
$$\begin{bmatrix}
    x & y \\
    z & w \\
\end{bmatrix}  \colon C\Oplus B \xrightarrow{f} C\Oplus B \ \ \ \ , \ \ \ \ \ \ \ \ \ \ \ \ 
\psi\oplus 1_B = 
\begin{bmatrix}
    \psi & 0 \\
    0 & 1_B \\
\end{bmatrix}$$
for some maps $x,y,z,w$ having the appropriate (co)domains.
 One easily checks that $f$ must take the form 
$$f = 
\begin{bmatrix}
    x & y \\
    z & w \\
\end{bmatrix} = \begin{bmatrix}
    x & y \\
    0 & 1_B \\
\end{bmatrix}$$ 
for some automorphism $x \colon C \xrightarrow{} C$ satisfying $\psi x=\psi$, and some map $y \colon B \xrightarrow{} C$ satisfying $\psi y=0$. It is also easily verified that $\begin{bmatrix}
    x^{-1} & -x^{-1}y \\
    0 & 1_B \\
\end{bmatrix}$ is a two-sided inverse of $f = \begin{bmatrix}
    x & y \\
    0 & 1_B \\
\end{bmatrix}$.
\end{proof}

The next lemma is a generalization of \cite[Lemma 2]{Goren-FP-inj}.

\begin{lemma}\label{lemma-direc-summand}
Assume   $\class{B}$ is a covering class and closed under direct summands.
Let $E$ be an exact $\Hom_R(\class{B},-)$-acyclic complex 
$$\cdots \xrightarrow{} E_{n+1} \xrightarrow{d_{n+1}} E_n \xrightarrow{d_n} E_{n-1} \xrightarrow{} \cdots$$ 
 as in Defiition~\ref{Defs-relative-G-inj}~(2), so with each $E_n\in\class{B}$. 
Then each cycle $Z_nE$   has a direct sum decomposition,  $Z_nE = L_n \Oplus B_n$, for some $B_n \in \class{B}$ and $L_n \in \rightperp{\class{B}}$. In particular, any weakly Gorenstein $\class{B}$-injective module has such a direct sum decomposition. 
\end{lemma}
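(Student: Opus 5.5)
Fix $n$ and work with $Z := Z_nE$. I will take the $\class{B}$-cover $\psi\colon C \xrightarrow{} Z$, show it is surjective with kernel $K$ having $\Ext^1_R(B,K)$ killed where needed, and then use the $\Hom_R(\class{B},-)$-acyclicity of $E$ to split off a $\class{B}$-summand. The key point is that the epimorphism $E_{n+1} \twoheadrightarrow Z_nE$ is a $\class{B}$-precover. Indeed, exactness of $\Hom_R(B,E)$ at degree $n$ says every map $B \xrightarrow{} Z_nE \subseteq E_n$ is killed by $d_n$, and hence lifts through $d_{n+1}\colon E_{n+1} \xrightarrow{} Z_nE$. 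Since $E_{n+1}\in\class{B}$, the map $d_{n+1}\colon E_{n+1}\twoheadrightarrow Z_nE$ is a surjective $\class{B}$-precover.

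Since $\class{B}$ is covering, $Z$ has a $\class{B}$-cover $\psi\colon C\xrightarrow{} Z$. By the standard structure theorem for precovers versus covers (e.g.\ \cite[Prop.~1.2.2/Thm.~1.2.7]{enochs-jenda-book} style argument), any precover decomposes as the cover plus a complement mapping to zero: $E_{n+1} \cong C \Oplus C'$ with $d_{n+1}$ restricted to $C'$ zero and restricted to $C$ equal to $\psi$. Concretely, I compare maps $C\xrightarrow{\alpha}E_{n+1}\xrightarrow{\beta}C$ over $Z$; $\beta\alpha$ is an automorphism by the cover property, so $C$ is a summand of $E_{n+1}$, with complement $C'\subseteq\Ker d_{n+1}$. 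Hence $\psi$ is surjective. Moreover $C'\in\class{B}$, because $\class{B}$ is closed under direct summands. Then $K:=\Ker\psi$ satisfies $Z_{n+1}E = \Ker d_{n+1} \cong K\Oplus C'$.

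Now I want to show $\Ext^1_R(B,K)=0$ for $B\in\class{B}$, and then split. Here I would run the same precover argument one step up: $d_{n+2}\colon E_{n+2}\twoheadrightarrow Z_{n+1}E$ is a $\class{B}$-precover, so every map from $B\in\class{B}$ into $K$ (a summand of $Z_{n+1}E$) lifts to $E_{n+2}$. I can then apply $\Hom_R(B,-)$ to $0\xrightarrow{}K\xrightarrow{}C\xrightarrow{}Z\xrightarrow{}0$, where surjectivity of $\Hom(B,C)\xrightarrow{}\Hom(B,Z)$ holds by the precover property. This gives only the vanishing of the connecting map, not of $\Ext^1(B,K)$ itself. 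So instead I would directly take $L_n$ and $B_n$ via a different decomposition. I apply the same splitting one degree lower: $Z_{n-1}E \cong K_{n-1}\Oplus C'_{n-1}$, where $K_{n-1} = \Ker$ of the cover of $Z_{n-1}E$. Thus every cycle $Z_nE$ decomposes as $K_n \Oplus B_n$ with $B_n := C'_{n+1}$-type summand in $\class{B}$, i.e.\ $Z_nE\cong \Ker(\psi_{n-1})\Oplus B_n$, where $\psi_{n-1}$ is the $\class{B}$-cover of $Z_{n-1}E$. It then remains to show $L_n := \Ker\psi_{n-1}\in\rightperp{\class{B}}$, which is Wakamatsu's lemma provided $\class{B}$ is closed under extensions.

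The main obstacle is exactly this: Wakamatsu needs extension-closure, which the lemma does not assume. I expect the fix is a direct argument. Given an extension $0\xrightarrow{}L\xrightarrow{}Y\xrightarrow{}B\xrightarrow{}0$ with $B\in\class{B}$, I push it along $L\hookrightarrow C_{n-1}$ (the cover). The pushed-out sequence gives a map $B\xrightarrow{}\Ext$-class in $C_{n-1}$, so I need to use that $\Hom(B,C_{n-1})\xrightarrow{}\Hom(B,Z_{n-1})$ is surjective together with the minimality of the cover to force splitting. If this fails, I would accept extension-closure as implicitly used, consistent with Lemma~\ref{lemma-weakly-covers}.
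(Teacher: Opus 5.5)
Your decomposition is the paper's. The paper takes the $\class{B}$-cover $\varphi_n\colon C_n\to Z_{n-1}E$. It gets $\alpha_n\colon C_n\to E_n$ over $Z_{n-1}E$ from $\Hom_R(\class{B},-)$-acyclicity and $\beta_n\colon E_n\to C_n$ back from the precover property. The cover property makes $\beta_n\alpha_n$ an automorphism, and so $Z_nE\cong\Ker\varphi_n\oplus\Ker\beta_n$ with $\Ker\beta_n\in\class{B}$ a summand of $E_n$. Your abandoned first attempt to prove $\Ext^1_R(B,K)=0$ one degree up is not needed. For $\Ker\varphi_n\in\rightperp{\class{B}}$ the paper simply cites Lemma~\ref{lemma-weakly-covers}, i.e.\ Wakamatsu's lemma, which assumes $\class{B}$ is closed under extensions. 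So the paper uses exactly the hypothesis you flagged without listing it, and your fallback is what the paper actually does. This is harmless where the lemma is applied (Theorem~\ref{theorem-direc-summand} and Corollary~\ref{corollary-cot}), since there $\class{B}$ is extension closed.

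The ``direct argument'' you hoped for cannot exist, because without closure under extensions the statement is false. Take $\Lambda=k[x]/(x^3)$, $M_i=\Lambda/(x^i)$, and $\class{B}=\mathrm{Add}(M_1\oplus M_3)$. This class contains the injectives ($\Lambda$ is self-injective) and is closed under summands. It is covering: it is precovering, and it is closed under direct limits because $\Lambda$ is pure-semisimple. It is not extension closed, because of $0\to M_1\to M_2\to M_1\to 0$. Let $\varphi=(\pi,f)\colon M_3\oplus M_1\to M_2$, with $\pi$ the projection and $f(1)=\bar{x}$. Then $\varphi$ is a $\class{B}$-precover with kernel $K=\Lambda\cdot(-x,1)\cong M_2$. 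The only simple submodule of $K$ lies in $\mathrm{rad}(M_3\oplus M_1)$, so $K$ contains no nonzero summand, and $\varphi$ is the $\class{B}$-cover. Splicing copies of $0\to M_2\to M_3\oplus M_1\xrightarrow{\varphi} M_2\to 0$ therefore gives an exact $\Hom_\Lambda(\class{B},-)$-acyclic complex with all terms in $\class{B}$ and all cycles $\cong M_2$. But $M_2$ is indecomposable and $M_2\notin\class{B}$. Also $\Ext^1_\Lambda(M_1,M_2)\neq 0$, because $0\to x\Lambda\to\Lambda\to M_1\to 0$ with $x\Lambda\cong M_2$ does not split. Hence $M_2$ admits no decomposition $L\oplus B$ with $L\in\rightperp{\class{B}}$ and $B\in\class{B}$; the kernel of the cover is exactly where Wakamatsu fails. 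The correct repair is to add closure under extensions to the hypotheses, not to look for a pushout or minimality argument.
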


\begin{proof}
For short, we set $Z_n := Z_{n}E$. Since $E$ is exact, we have short exact sequences as shown in the middle row of Diagram (\ref{equation-diagram-splittings}) below, and where each differential factors as $d_n = k_{n-1}e_n$. Each cycle $Z_n$ is weakly Gorenstein  $\class{B}$-injective, so applying Lemma~\ref{lemma-weakly-covers} to $Z_{n-1}$ we obtain a short exact sequence $$0 \xrightarrow{} L_n  \xrightarrow{f_n} C_n  \xrightarrow{\varphi_n} Z_{n-1} \xrightarrow{}0$$ where $\varphi_n \colon C_n \xrightarrow{} Z_{n-1}$ is a surjective $\class{B}$-cover with $L_n \in \rightperp{\class{B}}$. 
The (pre)cover property of $\varphi_n$ provides a morphism of short exact sequences, $(\sigma_n, \beta_n, 1)$, as shown in the bottom two rows of the diagram:
\begin{equation}\label{equation-diagram-splittings}
\begin{CD}
     0   @>>>  L_n  @>f_n>>   C_n    @>\varphi_n>>   Z_{n-1}   @>>>    0 \\
    @.        @V\tau_nVV    @V\alpha_nVV      @|   @.\\
     0   @>>> Z_n @>k_n>>  E_n    @>e_n>>   Z_{n-1}    @>>>    0 \\
    @.        @V\sigma_nVV     @V\beta_nVV      @|   @.\\
     0   @>>> L_n @>f_n>>  C_n    @>\varphi_n>>   Z_{n-1}    @>>>    0 \\
\end{CD}
\end{equation}
The morphism of short exact sequences between the two top rows, $(\tau_n, \alpha_n, 1)$, arises from the exactness of the chain complex $\Hom_R(C_n,E)$. Indeed $(d_{n-1})_*(k_{n-1}\varphi_n) = d_{n-1}(k_{n-1}\varphi_n) = (d_{n-1}k_{n-1})\varphi_n = 0\varphi_n =0$. So there exists $\alpha_n \colon C_n \xrightarrow{} E_n$ such that 
\begin{align*}
 (d_n)_*(\alpha_n)   & = k_{n-1}\varphi_n \\
  d_n\alpha_n   & = k_{n-1}\varphi_n \\
  k_{n-1}e_n\alpha_n   & = k_{n-1}\varphi_n. \\
\end{align*}
By left canceling the monic $k_{n-1}$ we get $e_n\alpha_n  = \varphi_n$. Then $\tau_n \colon L_n \xrightarrow{} Z_{n}$ exists by the universal property of the kernel $k_n$.
Now because $\varphi_n$ is a cover, the definition implies that $\beta_n\alpha_n$  must be an automorphism of $C_n$. Letting $r_n$ be an inverse, so that $1_{C_n} = r_n(\beta_n\alpha_n) = (r_n\beta_n)\alpha_n$, we see that $\alpha_n$ is a split monomorphism with retraction $r_n\beta_n$.
Thus we have a direct sum decomposition 
$$E_n = C_n \Oplus \Ker{(r_n\beta_n)}  = C_n \Oplus \Ker{\beta_n}.$$
By the Five Lemma (or Snake Lemma) we also see that $\sigma_n\tau_n$ is an automorphism of $L_n$. Therefore we obtain a similar direct sum decomposition 
$$Z_n = L_n \Oplus \Ker{\sigma_n}.$$
Moreover, the lower left square of the above commutative diagram is necessarily a pullback square with $\beta_n$ and $\sigma_n$ (split) epimorphisms. It follows that $\Ker{\sigma_n} = \Ker{\beta_n}$ is a direct summand of $E_n \in \class{B}$, and so also in $\class{B}$. Therefore, setting $B_n := \Ker{\sigma_n} = \Ker{\beta_n}$, we've shown $$Z_n = L_n \Oplus B_n$$ where $L_n \in \rightperp{\class{B}}$ and $B_n  \in \class{B}$.
\end{proof}

We now show that the direct summand $L_n$ appearing in the direct sum decomposition of Lemma~\ref{lemma-direc-summand} is in fact  Gorenstein $\class{B}$-injective.  This is inspired by and based on~\cite[Proposition~1]{iacob-weakly-Ding-3}. 

Recall from~\cite{holm} that a class of $R$-modules $\class{B}$ is said to be \textbf{\emph{injectively resolving}} if it contains all injective modules and is closed under extensions and cokernels of monomorphisms.

\begin{theorem}\label{theorem-direc-summand}
Assume $\class{B}$ is an injectively resolving  covering class and closed under direct summands. Then each weakly Gorenstein $\class{B}$-injective module, $Z$, is a direct sum of a Gorenstein $\class{B}$-injective module and a module in $\class{B}$. That is, each $Z \in \textnormal{w}\class{GI}_{\class{B}}$  has  a direct sum decomposition $Z = B \Oplus L$, for some $B \in \class{B}$, and $L \in \class{GI}_{\class{B}}$. 
\end{theorem}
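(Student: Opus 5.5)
The plan is to start from Lemma~\ref{lemma-direc-summand}, which already provides the splitting, and then show that the summand $L$ lying in $\rightperp{\class{B}}$ is in fact Gorenstein $\class{B}$-injective. So write $Z = Z_0E$ for an exact $\Hom_R(\class{B},-)$-acyclic complex $E$ with each $E_n\in\class{B}$. For every $n$, Lemma~\ref{lemma-direc-summand} and its proof give $Z_n := Z_nE = L_n \Oplus B_n$ with $B_n\in\class{B}$ and $L_n\in\rightperp{\class{B}}$. They also give a short exact sequence $0\to L_n \to C_n \xrightarrow{\varphi_n} Z_{n-1}\to 0$ in which $\varphi_n$ is a $\class{B}$-cover. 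Put $L := L_0$ and $B := B_0$. It remains to exhibit $L$ as a cycle of an exact $\Hom_R(\class{B},-)$-acyclic complex of injectives.

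The key step is to replace the cover of $Z_{n-1}=L_{n-1}\Oplus B_{n-1}$ by a cover of $L_{n-1}$ alone, without changing the kernel. Let $\psi \colon C'_n \to L_{n-1}$ be a $\class{B}$-cover; it is surjective with kernel $K_n\in\rightperp{\class{B}}$ by Lemma~\ref{lemma-weakly-covers}, since $L_{n-1}$ is a summand of a weakly Gorenstein $\class{B}$-injective module. Because $B_{n-1}\in\class{B}$, the identity $1_{B_{n-1}}$ is a $\class{B}$-cover of $B_{n-1}$. Lemma~\ref{lemma-cover-sums} then shows that $\psi\oplus 1_{B_{n-1}}$ is a $\class{B}$-cover of $Z_{n-1}$ whose kernel is isomorphic to $K_n$. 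By uniqueness of covers up to isomorphism it is isomorphic to $\varphi_n$, so $K_n\cong L_n$. We therefore obtain short exact sequences
$$0\to L_n \to C'_n \to L_{n-1}\to 0, \qquad C'_n\in\class{B},$$
for all $n\in\Z$. I expect this identification of kernels to be the main point needing care; it works only because we may cite Lemma~\ref{lemma-cover-sums} together with uniqueness of covers, and so avoid a general "covers of sums" statement.

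Next, in each of these sequences both $L_n$ and $L_{n-1}$ lie in $\rightperp{\class{B}}$. This class is closed under extensions, so $C'_n\in\class{B}\cap\rightperp{\class{B}}$. Since $\class{B}$ is injectively resolving, it is closed under cokernels of monomorphisms, and Lemma~\ref{lemma-injectives} shows that $C'_n$ is injective.

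Finally, splice the sequences for all $n\in\Z$ to obtain an exact complex $I$ of injective modules with $Z_nI\cong L_n$, and in particular $Z_0I\cong L$. Each cycle lies in $\rightperp{\class{B}}$, so $\Hom_R(B',-)$ carries each of the short exact sequences above to a short exact sequence for every $B'\in\class{B}$. Hence $\Hom_R(B',I)$ is exact. Thus $L\in\class{GI}_{\class{B}}$, and $Z = B\Oplus L$ with $B\in\class{B}$, as claimed.
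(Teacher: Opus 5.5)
Your proposal is correct and follows the paper's proof essentially step for step. You take a $\class{B}$-cover of $L_{n-1}$ and use Lemma~\ref{lemma-cover-sums} together with uniqueness of covers to identify its kernel with $L_n$. You then get injectivity of the middle terms from Lemma~\ref{lemma-injectives} and splice the resulting sequences into an exact $\Hom_R(\class{B},-)$-acyclic complex of injectives.
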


\begin{proof}
We may assume that $Z$ is one of the cycles  of the acyclic complex $E$ of the previous Lemma~\ref{lemma-direc-summand}. By that lemma,  each cycle has a direct sum decomposition,  $Z_nE = L_n \Oplus B_n$, for some $B_n \in \class{B}$ and $L_n \in \rightperp{\class{B}}$. 

Now for each $n$, let $\psi_n \colon I_n \xrightarrow{} L_{n-1}$ be a $\class{B}$-cover of $L_{n-1}$.  By Lemma~\ref{lemma-weakly-covers},  it sits in  a short exact 
$$0 \xrightarrow{} \Ker{\psi_n} \xrightarrow{\kappa_n} I_n  \xrightarrow{\psi_n} L_{n-1} \xrightarrow{}0$$
 with $I_n\in\class{B}$ and $\Ker{\psi} \in \rightperp{\class{B}}$. Since  $\Ker{\psi}, \, L_{n-1} \in \rightperp{\class{B}}$ we in fact have  $I_n\in\class{B}\cap \rightperp{\class{B}}$. So by Lemma~\ref{lemma-injectives}, 
 $I_n$ is an injective module. 
 
Next, we will show that $\Ker{\psi_n} \cong L_n$. 
To prove this, we simply appeal to Lemma~\ref{lemma-cover-sums}. It asserts that the short exact sequence 
$$0 \xrightarrow{} \Ker{\psi_n} \xrightarrow{}  I_n \Oplus B_{n-1}  \xrightarrow{\psi_n\oplus 1} L_{n-1}\Oplus B_{n-1}  \xrightarrow{}0$$
is such that $\psi_n\oplus 1$ is a  $\class{B}$-cover of $L_{n-1}\Oplus B_{n-1} \cong Z_{n-1}$.  
 It follows that this short exact sequence is isomorphic to 
$$0 \xrightarrow{} L_n  \xrightarrow{f_n} C_n  \xrightarrow{\varphi_n} Z_{n-1} \xrightarrow{}0,$$
that is, the one previously constructed in the proof of Lemma~\ref{lemma-direc-summand}. In particular we conclude $ \Ker{\psi_n}\cong L_n$.

So now we have shown that there exist  short exact sequences $$0 \xrightarrow{} L_n \xrightarrow{} I_n  \xrightarrow{} L_{n-1} \xrightarrow{}0,$$ 
one for each integer $n$, each of which has  $I_n$ an injective module and with $L_n \in \rightperp{\class{B}}$. By splicing all these short exact sequences together it is clear that the resulting chain complex, $I$, is an exact $\Hom_R(\class{B},-)$-acyclic complex of injectives. This shows each $L_n \in \class{GI}_{\class{B}}$ and completes the proof of the theorem. 
\end{proof}

In the case that the covering class $\class{B}$ is the right half of an hereditary cotorsion pair, $(\class{C}, \class{B})$, then all of the hypotheses of Theorem~\ref{theorem-direc-summand} hold. So we get the following corollary. 

\begin{corollary}\label{corollary-cot}
Assume   $(\class{C}, \class{B})$ is an hereditary cotorsion pair with $\class{B}$ a covering class.
Then each weakly Gorenstein $\class{B}$-injective module is the direct sum of some Gorenstein $\class{B}$-injective module with some module in $\class{B}$.  
\end{corollary}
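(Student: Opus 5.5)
The plan is to deduce Corollary~\ref{corollary-cot} directly from Theorem~\ref{theorem-direc-summand}. That means checking that $\class{B}$ is an injectively resolving covering class which is closed under direct summands and contains all injective modules (the standing assumption of this section). Covering is given. The other properties come from $(\class{C},\class{B})$ being an hereditary cotorsion pair.

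First, $\class{B} = \rightperp{\class{C}}$ is the right half of a cotorsion pair. So it is closed under extensions, by the long exact $\Ext$ sequence, and under direct summands, since $\Ext^1_R(C,-)$ commutes with finite direct sums. It also contains every injective module, because $\Ext^1_R(C,I)=0$ for injective $I$.

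Second, for closure under cokernels of monomorphisms, take $0\to B'\to B\to B''\to 0$ with $B',B\in\class{B}$ and let $C\in\class{C}$. The long exact sequence gives $\Ext^1_R(C,B)\to\Ext^1_R(C,B'')\to\Ext^2_R(C,B')$. The left term vanishes since $B\in\class{B}$, and the right term vanishes by the hereditary hypothesis $\Ext^i_R(C,B')=0$ for $i\geq 1$. Hence $B''\in\rightperp{\class{C}}=\class{B}$, and $\class{B}$ is injectively resolving in the sense of~\cite{holm}.

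With the hypotheses verified, Theorem~\ref{theorem-direc-summand} applies verbatim: each $Z\in\textnormal{w}\class{GI}_{\class{B}}$ decomposes as $Z=B\Oplus L$ with $B\in\class{B}$ and $L\in\class{GI}_{\class{B}}$, which is the statement. There is no real obstacle. The only point needing care is the use of the hereditary hypothesis for the cokernel closure. In the module category, heredity in the sense $\Ext^i=0$ for all $i\geq1$ is exactly what is needed, so nothing beyond the paper's definition is required.
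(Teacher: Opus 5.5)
Your proposal is correct and follows the paper's route exactly: the paper also obtains the corollary by noting that the right half $\class{B}$ of an hereditary cotorsion pair satisfies all the hypotheses of Theorem~\ref{theorem-direc-summand}. You additionally spell out the verification that the paper leaves implicit: closure under extensions and direct summands, containing the injectives, and closure under cokernels of monomorphisms via $\Ext^2_R(C,B')=0$.
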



\subsection{The weakly Gorenstein $\class{B}$-injective model structure}
 We continue with the same assumptions on $\class{B}$ and with the same notation:  $\class{GI}_{\class{B}}$ denotes the   class of all Gorenstein $\class{B}$-injective modules, and   $\textnormal{w}\class{GI}_{\class{B}}$  denotes  the class of all weakly Gorenstein $\class{B}$-injectives.  
We will use Theorem~\ref{theorem-cots-models} to obtain an abelian model structure on $\rmod$ whose class of fibrant objects is generated by $\textnormal{w}\class{GI}_{\class{B}}$. 

\begin{notation}\label{notation-relative-G-inj}
We set $\class{W}_{\class{B}} := \leftperp{\class{GI}_{\class{B}}}$. 
\end{notation}

$\class{W}_{\class{B}} $ will be the class of trivial objects in our model structure.
Note that $\class{W}_{\class{B}}$ is precisely the class of all modules $W$ such that $\Hom_R(W,E)$ remains exact for all exact $\Hom_R(\class{B},-)$-acyclic complexes of injectives $E$. Indeed it follows from the definition that $W \in \class{W}_{\class{B}}$ if and only if $\Ext^1_R(W,Z_{n}E)=0$ for all $n$ and all such $E$, and this is equivalent to $\Hom_R(W,E)$ being exact. In particular, $\class{B} \subseteq \class{W}_{\class{B}}$. 
We note too that the $\Hom_R(W,E)$ characterization easily implies  that $\class{W}_{\class{B}}$ is a thick class; see also~\cite[Lemma 22]{gillespie-iacob-duality-pairs}.

\begin{corollary}\label{cor-modelstruc}
Let $(\class{C},\class{B})$ be a  hereditary cotorsion pair in $\rmod$, cogenerated by some set, and assume $\class{B}$ is closed under coproducts and pure submodules. 
Then the cotorsion pairs $(\class{C},\class{B})$ and $(\class{W}_{\class{B}}, \class{GI}_{\class{B}})$ satisfy the hypotheses of Theorems~\ref{theorem-cots-models}/\ref{theorem-fibrant-objects}, and  moreover  $\leftperp{(\class{B}\cup \class{GI}_{\class{B}})} = \leftperp{\textnormal{w}\class{GI}_{\class{B}}}$. Thus we have a Hovey triple on $R\textnormal{-Mod}$,
$$\mathfrak{M} = (\class{C}, \class{W}_{\class{B}}, \rightperp{(\leftperp{\textnormal{w}\class{GI}_{\class{B}}})}).$$
 This is a cofibrantly generated and hereditary abelian model structure.
The (bi)fibrant objects are characterized as in Theorem~\ref{theorem-fibrant-objects} and Proposition~\ref{prop-bifibrant-acyclicity}. In particular:
\begin{itemize}
\item  $M$ is fibrant, i.e., $M \in  \rightperp{(\leftperp{\textnormal{w}\class{GI}_{\class{B}}})}$,   if and only  if, $M=Z_{0}X$ for some exact  $\Hom_R(\class{C}\cap\class{W}, -)$-acyclic complex $X$ of $R$-modules with each  $X_n \in \class{B}$.
\item  $M$ is bifibrant, i.e.,  $M \in \class{C} \cap \rightperp{(\leftperp{\textnormal{w}\class{GI}_{\class{B}}})}$, if and only if,  $M=Z_{0}X$ for some exact  complex $X$ satisfying that each  $X_n \in \class{C}\cap\class{B}$, and  $\Hom_{R}(N,X)$ is an exact complex  for any $N\in\class{C}\cap\class{W}_{\class{B}}$, and also  $\Hom_{R}(X,N)$ is an exact complex  for any $N\in\class{B}$.
\end{itemize}
\end{corollary}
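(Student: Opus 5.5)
We will verify the hypotheses of Theorem~\ref{theorem-cots-models}, taking $(\class{W},\class{R}) := (\class{W}_{\class{B}}, \class{GI}_{\class{B}})$ and $(\class{C},\class{F}) := (\class{C},\class{B})$. Then we identify the generating class, and finally read off the characterizations from Theorem~\ref{theorem-fibrant-objects} and Proposition~\ref{prop-bifibrant-acyclicity}.

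\textbf{Step 1: the pair $(\class{C},\class{B})$.} Since $(\class{C},\class{B})$ is cogenerated by a set, it is complete. It is hereditary by hypothesis, and $\class{B}$ contains the injectives.

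\textbf{Step 2: the pair $(\class{W}_{\class{B}}, \class{GI}_{\class{B}})$.} We need this to be a complete cotorsion pair that is injective. This is the step I expect to be the main obstacle. Here I would invoke the known results from \cite{gillespie-iacob-duality-pairs} on Gorenstein $\class{B}$-injectives. Since $\class{B}$ is closed under coproducts and pure submodules and contains the injectives, $\class{B}$ should be the right half of a (product/coproduct closed) duality pair situation. In that setting \cite{gillespie-iacob-duality-pairs} gives an injective cotorsion pair $(\leftperp{\class{GI}_{\class{B}}}, \class{GI}_{\class{B}})$ that is cogenerated by a set. If a direct citation is not available, I would show that $\class{GI}_{\class{B}}$ is the class of cycles of complexes in $\rightperp{}$ of a set. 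Concretely, the exact $\Hom_R(\class{B},-)$-acyclic complexes of injectives should equal the right orthogonal in $\ch$ of a set of complexes. This uses that $\class{B}$ is determined, via purity/deconstructibility, by a set of $\kappa$-presentable members. Completeness then follows from Eklof--Trlifaj, and thickness of $\class{W}_{\class{B}}$ was noted just before the corollary. Finally, $\class{W}_{\class{B}}\cap\class{GI}_{\class{B}}$ consists of the injectives: an object $G$ in it has $\Ext^1(G,G)=0$, so $G$ is a summand of an injective in its defining complex. Also $\class{B}\subseteq\class{W}_{\class{B}}$, as already observed. So Theorem~\ref{theorem-cots-models} yields the Hovey triple $(\class{C},\class{W}_{\class{B}},\rightperp{(\leftperp{(\class{B}\cup\class{GI}_{\class{B}})})})$, and it is hereditary by Theorem~\ref{theorem-fibrant-objects}. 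It is cofibrantly generated because both generating cotorsion pairs are cogenerated by sets, and $\class{C}\cap\class{W}_{\class{B}}$ is then cogenerated by the union of the sets.

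\textbf{Step 3: the identity $\leftperp{(\class{B}\cup \class{GI}_{\class{B}})} = \leftperp{\textnormal{w}\class{GI}_{\class{B}}}$.} Since $\class{B}$ and $\class{GI}_{\class{B}}$ are contained in $\textnormal{w}\class{GI}_{\class{B}}$, we get $\supseteq$. For $\subseteq$ we apply Corollary~\ref{corollary-cot}. Its covering hypothesis holds because a class closed under pure submodules and coproducts (hence direct limits, being also closed under pure quotients as a cotorsion right half closed under pure submodules) is covering by Holm--J{\o}rgensen. Alternatively, we use that $\class{B}$ is covering as in \cite{holm-jorgensen-precovers-purity}. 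Then every weakly Gorenstein $\class{B}$-injective is $B\oplus L$ with $B\in\class{B}$ and $L\in\class{GI}_{\class{B}}$, so $\Ext^1(N,-)$ vanishes on it whenever it vanishes on $\class{B}\cup\class{GI}_{\class{B}}$.

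\textbf{Step 4: the characterizations.} The fibrant and bifibrant descriptions are then just statement~(6) of Theorem~\ref{theorem-fibrant-objects} and Proposition~\ref{prop-bifibrant-acyclicity}, with $\class{F}=\class{B}$.
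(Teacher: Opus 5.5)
Your overall route is the paper's:
\begin{itemize}
\item cite \cite[Theorem~26]{gillespie-iacob-duality-pairs} for the injective cotorsion pair $(\class{W}_{\class{B}},\class{GI}_{\class{B}})$;
\item get the covering property of $\class{B}$ from \cite[Theorem~2.5]{holm-jorgensen-precovers-purity};
\item use the decomposition $Z=B\oplus L$ of Corollary~\ref{corollary-cot} to prove $\leftperp{(\class{B}\cup\class{GI}_{\class{B}})}=\leftperp{\textnormal{w}\class{GI}_{\class{B}}}$;
\item read off the characterizations.
\end{itemize}

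However, your justification of cofibrant generation is wrong. Suppose $\class{B}=\rightperp{\class{S}_1}$ and $\class{GI}_{\class{B}}=\rightperp{\class{S}_2}$. Then $\rightperp{(\class{S}_1\cup\class{S}_2)}=\class{B}\cap\class{GI}_{\class{B}}$. Since $\class{B}\subseteq\class{W}_{\class{B}}$, this is contained in $\class{W}_{\class{B}}\cap\class{GI}_{\class{B}}$, so it is just the class of injectives. Thus the union of the two sets cogenerates the trivial pair $(R\textnormal{-Mod},\textnormal{Inj})$, not $(\class{C}\cap\class{W}_{\class{B}},\rightperp{(\leftperp{(\class{B}\cup\class{GI}_{\class{B}})})})$.

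Taking unions of cogenerating sets handles intersections of \emph{right} halves, as in Lemma~\ref{lemma-cots}(2). Here you need the intersection of the \emph{left} halves. The missing ingredient has two parts. First, $\class{C}$ and $\class{W}_{\class{B}}$ are deconstructible, being left halves of set-cogenerated pairs. Second, an intersection of two deconstructible classes is again deconstructible \cite[Prop.~1.7/Prop.~2.9(1)]{stovicek-deconstructible}. Eklof's lemma then shows that a deconstructing set for $\class{C}\cap\class{W}_{\class{B}}$ cogenerates the pair. This is exactly how the paper argues.

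Two smaller points. First, the hypotheses of the corollary do not give you a duality pair. The input the paper feeds into \cite[Theorem~26]{gillespie-iacob-duality-pairs} is deconstructibility of $\class{B}$: every module in $\class{B}$ is a transfinite extension of modules from some set. This follows from the standard purity argument once $\class{B}$ is closed under pure submodules \emph{and} pure quotients. The latter comes from heredity (closure under cokernels of monomorphisms), not merely from $\class{B}$ being a right half. Second, $\Ext^1_R(G,G)=0$ does not make $G$ a summand of an injective. The correct reason for $G\in\class{W}_{\class{B}}\cap\class{GI}_{\class{B}}$ to be injective is $\Ext^1_R(G,Z_1E)=0$, which holds because $Z_1E\in\class{GI}_{\class{B}}$. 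This splits $Z_1E\rightarrowtail E_1\twoheadrightarrow G$. In any case, this is part of what the cited theorem already delivers.
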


As we point out in the proof, these conditions on $\class{B}$ imply that it must be a covering class.

\begin{proof}
Note that the hereditary hypothesis automatically implies that $\class{B}$ is also closed under taking quotients by pure submodules and hence under direct limits.  
So then, by a standard argument using purity, there exists a set $\class{T}$ such that each module in $\class{B}$ is  a transfinite extension of modules in $\class{T}$.  It  follows from \cite[Theorem 26]{gillespie-iacob-duality-pairs} that  $(\class{W}_{\class{B}}, \class{GI}_{\class{B}})$ is an injective cotorsion pair,  and it is cogenerated by a set. 
 As previously noted, $\class{B}\subseteq \class{W}_{\class{B}}$, so the  hypotheses of Theorems~\ref{theorem-cots-models}/\ref{theorem-fibrant-objects} are all met. It gives us a  Hovey triple: $\mathfrak{M} = (\class{C}, \class{W}_{\class{B}}, \rightperp{(\leftperp{(\class{B}\cup \class{GI}_{\class{B}})} )} )$.   
The corresponding model structure is cofibrantly generated since each of its cotorsion pairs are cogenerated by a set. Indeed $\class{C}\cap\class{W}_{\class{B}}$ is a deconstructible class by~\cite[Prop.~1.7/Prop.~2.9(1)]{stovicek-deconstructible}.

Next, we note that our hypotheses also guarantee $\class{B}$ is a  covering class, by \cite[Theorem~2.5]{holm-jorgensen-precovers-purity}. So the hypotheses of Corollary~\ref{corollary-cot} are also met. Therefore,  each weakly Gorenstein $\class{B}$-injective module  takes the form  $Z = B \Oplus L$, for some $B \in \class{B}$, and $L \in \class{GI}_{\class{B}}$. Using this we prove what we have  claimed:   
$$\leftperp{(\class{B}\cup \class{GI}_{\class{B}})} = \leftperp{\textnormal{w}\class{GI}_{\class{B}}}.$$
 Indeed, we already know 
 $\leftperp{(\class{B}\cup \class{GI}_{\class{B}})} = \class{C}\cap\class{W}_{\class{B}}$.  We now show that $\leftperp{\textnormal{w}\class{GI}_{\class{B}}} =  \class{C}\cap\class{W}_{\class{B}}$ too. 
Indeed, first note that both $\class{B}\subseteq   \textnormal{w}\class{GI}_{\class{B}}$ and $\class{GI}_{\class{B}} \subseteq  \textnormal{w}\class{GI}_{\class{B}}$. It follows that both 
$\leftperp{\textnormal{w}\class{GI}_{\class{B}}} \subseteq  \leftperp{\class{B}} = \class{C}$ and $\leftperp{\textnormal{w}\class{GI}_{\class{B}}}  \subseteq  \leftperp{\class{GI}_{\class{B}}} = \class{W}_{\class{B}} $.  Hence   $\leftperp{\textnormal{w}\class{GI}_{\class{B}}} \subseteq  \class{C}\cap \class{W}_{\class{B}} $. To show the reverse containment, $\class{C}\cap \class{W}_{\class{B}}  \subseteq  \leftperp{\textnormal{w}\class{GI}_{\class{B}}}$,  let $W \in \class{C}\cap \class{W}_{\class{B}} $ and  $Z \in \textnormal{w}\class{GI}_{\class{B}}$. We are to show $\Ext^1_R(W,Z)=0$. 
But as already noted,  $Z = B \Oplus L$  for some $B \in \class{B}$, and $L \in \class{GI}_{\class{B}}$. So,  
$$\Ext^1_R(W,Z) = \Ext^1_R(W , B) \Oplus \Ext^1_R(W,L). $$
We have $ \Ext^1_R(W,B)=0$ since  $W\in\class{C}$, and $\Ext^1_R(W,L)=0$ since $W\in\class{W}_{\class{B}} $. Therefore, $\Ext^1_R(W,Z)=0$,  and we have shown $\class{C}\cap \class{W}_{\class{B}}   =  \leftperp{\textnormal{w}\class{GI}_{\class{B}}}$.
\end{proof}

\section{The weakly Ding injective modules and model structure}\label{section-weakly-Ding-model}

In  this section we let $R$ be a left coherent ring. Applying  Corollary~\ref{cor-modelstruc}, we next obtain an abelian model structure on $\rmod$ whose  class of  fibrant objects is generated by the class of all weakly Ding injective modules, from~\cite{iacob-weakly-Ding-3}.

Indeed, since $R$ is left coherent,  the class of all finitely presented modules  cogenerates a complete hereditary cotorsion pair $(\class{FP}, \class{FI})$, where  $\class{FI}$ is  the class of all  \emph{$FP$-injective} modules (i.e., \emph{absolutely pure} modules). Recall that the modules in $\class{FP}$ are called \emph{$FP$-projective}, and they are precisely the direct summands of transfinite extensions of finitely presented modules.  Taking $\class{B} = \class{FI}$ in  Definition~\ref{Defs-relative-G-inj},  we obtain the class $\class{DI}$, of all \emph{Ding injective} modules, as well as the class $\textnormal{w}\class{DI}$  of all \emph{weakly Ding injective} modules.  Again, the weakly Ding injective modules were studied in  \cite{Goren-FP-inj}, \cite{iacob-weakly-Ding-1}, \cite{iacob-weakly-Ding-2}, and \cite{iacob-weakly-Ding-3}.
We get the following result.

\begin{theorem}\label{theorem-weakly-Ding-injectives}
Let $R$ be a left coherent ring. 
Then the FP-injective cotorsion pair $(\class{FP},\class{FI})$, along with the Ding injective cotorsion pair $(\class{W}, \class{DI})$, satisfy the hypotheses of Theorems~\ref{theorem-cots-models}/\ref{theorem-fibrant-objects}.  Moreover, $\leftperp{(\class{FI}\cup \class{DI})} = \leftperp{\textnormal{w}\class{DI}}$, where $\textnormal{w}\class{DI}$ is the class of weakly Ding injective modules. We obtain a  cofibrantly generated and hereditary abelian model structure on $R\textnormal{-Mod}$,
$$\mathfrak{M}_{fp} = (\class{FP}, \class{W}, \rightperp{(\leftperp{\textnormal{w}\class{DI}})}).$$
The (bi)fibrant objects are characterized as in Theorem~\ref{theorem-fibrant-objects} and Proposition~\ref{prop-bifibrant-acyclicity}. In particular:
\begin{itemize}
\item  A module $M$ is fibrant, i.e. 
$M \in  \rightperp{(\leftperp{\textnormal{w}\class{DI}})}$,   if and only  if,  $M=Z_{0}X$ for some exact  $\Hom_R(\class{FP}\cap\class{W}, -)$-acyclic complex $X$ of FP-injective modules.
\item  A module $M$ is bifibrant, i.e.,  $M \in \class{FP} \cap \rightperp{(\leftperp{\textnormal{w}\class{DI}})}$, if and only if,  $M=Z_{0}X$ for some exact  complex $X$ satisfying that each  $X_n$ is both FP-projective and FP-injective,  the complex  $\Hom_{R}(N,X)$ is  exact  for any $N\in\class{FP}\cap\class{W}$, and also  $\Hom_{R}(X,N)$ is  exact  for any $N\in\class{FI}$.
\end{itemize}
\end{theorem}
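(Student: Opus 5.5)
The plan is to obtain the theorem as the special case $\class{B} = \class{FI}$ of Corollary~\ref{cor-modelstruc}. Once its hypotheses are checked, the remaining claims follow from that corollary together with Theorem~\ref{theorem-fibrant-objects} and Proposition~\ref{prop-bifibrant-acyclicity}.

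With $\class{B}=\class{FI}$ we have $\class{GI}_{\class{B}} = \class{DI}$, $\textnormal{w}\class{GI}_{\class{B}} = \textnormal{w}\class{DI}$ and $\class{W}_{\class{B}} = \leftperp{\class{DI}} = \class{W}$, so the Ding injective cotorsion pair is exactly $(\class{W}_{\class{B}},\class{GI}_{\class{B}})$.

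The hypotheses of Corollary~\ref{cor-modelstruc} are checked as follows.

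\begin{enumerate}
\item \emph{Cogenerated by a set.} $(\class{FP},\class{FI})$ is cogenerated by the set $\class{S}$ of isomorphism representatives of finitely presented modules, by definition.
\item \emph{Hereditary.} Since $R$ is left coherent, the pair is hereditary, as recalled from \cite[Appendix~B]{stovicek-purity}. Directly, $\class{FI}$ is closed under cokernels of monomorphisms, because finitely presented modules over a left coherent ring have resolutions by finitely generated projectives.
\item \emph{Closed under coproducts.} This holds because each finitely presented $M$ is finitely presented, so $\Ext^1_R(M,-)$ commutes with direct sums over a coherent ring. Alternatively, $\class{FI}$ is closed under direct limits over a coherent ring.
\item \emph{Closed under pure submodules.} If $A' \subseteq A$ is pure with $A$ FP-injective, then $\Hom(M,A)\to\Hom(M,A/A')$ is surjective for finitely presented $M$. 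The long exact sequence then embeds $\Ext^1(M,A')$ into $\Ext^1(M,A)=0$.
\end{enumerate}

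These are standard facts, and I do not expect any of them to be an obstacle.

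With the hypotheses verified, Corollary~\ref{cor-modelstruc} gives the following.

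\begin{itemize}
\item $\leftperp{(\class{FI}\cup\class{DI})} = \leftperp{\textnormal{w}\class{DI}}$. The nontrivial ingredient is the decomposition of weakly Ding injectives from Corollary~\ref{corollary-cot}, which is already built into Corollary~\ref{cor-modelstruc}.
\item $\mathfrak{M}_{fp} = (\class{FP},\class{W},\rightperp{(\leftperp{\textnormal{w}\class{DI}})})$ is a cofibrantly generated hereditary Hovey triple.
\item The two bullet characterizations of fibrant and bifibrant modules are the corresponding bullets of Corollary~\ref{cor-modelstruc} with $\class{C}\cap\class{B} = \class{FP}\cap\class{FI}$ and $\class{W}_{\class{B}}=\class{W}$. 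These in turn come from Theorem~\ref{theorem-fibrant-objects}(6) and Proposition~\ref{prop-bifibrant-acyclicity}.
\end{itemize}

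The only mild point of care is confirming that the injective cotorsion pair $(\class{W},\class{DI})$ produced by \cite[Theorem 26]{gillespie-iacob-duality-pairs} inside Corollary~\ref{cor-modelstruc} coincides with the Ding injective pair named in the statement. This holds by the identification $\class{GI}_{\class{FI}}=\class{DI}$.
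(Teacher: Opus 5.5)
Your proposal is correct and follows the paper's proof: the paper also obtains the theorem as the instance $\class{B}=\class{FI}$ of Corollary~\ref{cor-modelstruc}, but it cites Stenstr\"om for the hypotheses that you verify by hand. In your item 3 the phrase ``each finitely presented $M$ is finitely presented'' is garbled. The claim still holds over any ring, because $\Ext^1_R(M,-)$ commutes with direct sums whenever $M$ is finitely presented.
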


\begin{proof}
This is really an instance of Corollary~\ref{cor-modelstruc}.  It is well known that over a left coherent ring $R$, the class $\class{B}=\class{FI}$ satisfies the hypotheses of Corollary~\ref{cor-modelstruc}; see~\cite{stenstrom-fp}. So the result follows. 
 \end{proof}

 Note that in the above description of the bifibrant modules, $X$ is in particular a totally acyclic complex of FP-pro-injective modules in the following sense. 

\begin{definition}\label{def-FP-pro-inj-mods}
We will call a module in the class $\class{FP}\cap\class{FI}$ an  \textbf{\emph{FP-projective-injective}} module, or for short, an \textbf{\emph{FP-pro-injective}} module.
\begin{enumerate}
\item By a \textbf{\emph{totally acyclic complex of FP-pro-injective modules}} we mean an exact complex $X$ with each $X_n$ an  FP-pro-injective module and with both $\Hom_R(N,X)$ and $\Hom_R(X,N)$ also being exact whenever $N$ is an FP-pro-injective. 
\item By a \textbf{\emph{Gorenstein FP-pro-injective module}} we mean a module $M = Z_0X$ equaling  the 0-cycle module of some 
totally acyclic complex of FP-pro-injective modules, $X$.
\end{enumerate}
We let $\class{GFP}$ denote the full subcategory of all Gorenstein FP-pro-injective modules. 
\end{definition}

\begin{corollary}\label{corollary-weakly-Ding-injectives}
Let $R$ be a left coherent ring such that  each module $M\in \class{FP}\cap\class{W}$ has $FP\textnormal{-id}(M) < \infty$,  i.e.,  finite FP-injective dimension. 
Then the abelian model structure $$\mathfrak{M}_{fp} = (\class{FP}, \class{W}, \rightperp{(\leftperp{\textnormal{w}\class{DI}})})$$ satisfies 
$\class{FP}\cap\rightperp{(\leftperp{\textnormal{w}\class{DI}})} = \class{GFP}$. Thus we may identify the homotopy category as 
 $$\textnormal{Ho}(\mathfrak{M}_{fp}) \simeq \textnormal{St}(\class{GFP}),$$  where $\textnormal{St}(\class{GFP})$ denotes the stable category of  the Frobenius category of all Gorenstein FP-pro-injective modules.
\end{corollary}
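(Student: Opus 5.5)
This is an application of Theorem~\ref{theorem-stable-CF}, with $(\class{C},\class{F}) = (\class{FP},\class{FI})$ and $(\class{W},\class{R}) = (\class{W},\class{DI})$. Theorem~\ref{theorem-weakly-Ding-injectives} already gives that the Hovey triple $\mathfrak{M}_{fp}$ exists, is hereditary, and that $\rightperp{(\leftperp{(\class{FI}\cup\class{DI})})} = \rightperp{(\leftperp{\textnormal{w}\class{DI}})}$. With these choices $\class{GCF}$ is exactly $\class{GFP}$ of Definition~\ref{def-FP-pro-inj-mods}. It therefore suffices to verify conditions (1) and (2) of Theorem~\ref{theorem-stable-CF}. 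The identification $\textnormal{Ho}(\mathfrak{M}_{fp}) \simeq \textnormal{St}(\class{GFP})$ is then part of the conclusion of that theorem.

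Condition (1) asks that each $N \in \class{FP}\cap\class{W}$ have finite $\class{FI}$-dimension. Here the $\class{F}$-dimension is just the FP-injective dimension. Indeed, $(\class{FP},\class{FI})$ is hereditary over a left coherent ring, so $\class{FI}$ is closed under cokernels of monomorphisms. Hence a finite coresolution by FP-injectives in the sense of the paper exists exactly when $FP\textnormal{-id}(N)<\infty$. This is precisely the hypothesis of the corollary, so condition (1) holds immediately.

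Condition (2) is the substantive step and I expect it to be the main obstacle. It requires that every exact complex $X$ with all $X_n \in \class{FP}\cap\class{FI}$ has all cycles $Z_nX \in \class{FP}$. This is an FP-projective periodicity statement. I would invoke the result of Bazzoni--Hrbek--Positselski~\cite{bazzoni-hrbek-positselski-fp-projective-periodicity}: over a left coherent ring (more generally, a left locally coherent setting), any exact complex of FP-projective modules with FP-projective\ldots{} cycles\ldots{} more precisely, any exact complex of FP-projective modules has FP-projective cycles. Since our $X_n$ are in particular FP-projective, this gives $Z_nX\in\class{FP}$ directly.

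If that reference needs adjusting, the fallback is the following route. The $X_n$ are FP-injective, and since $\class{FI}$ is coresolving the argument of Theorem~\ref{theorem-fibrant-objects}(6) gives fibrant information on the cycles. I would then use a Hill-lemma/deconstructibility argument: write $X$ as a direct limit of bounded pieces of finitely presented modules, and use that $\class{FP}$ consists of summands of transfinite extensions of finitely presented modules. I would first check whether the cited periodicity theorem applies verbatim, as the remark after Theorem~\ref{theorem-stable-CF} suggests it does.

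With (1) and (2) in hand, Theorem~\ref{theorem-stable-CF} yields $\class{FP}\cap\rightperp{(\leftperp{\textnormal{w}\class{DI}})} = \class{GFP}$ and the stated triangle equivalence.
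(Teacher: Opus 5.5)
Your proposal is correct and follows the paper's proof. The paper also applies Theorem~\ref{theorem-stable-CF} with condition (1) supplied by the hypothesis, and gets condition (2) from \cite[Corollary~4.7]{bazzoni-hrbek-positselski-fp-projective-periodicity}: over a left coherent ring, every exact complex of FP-projectives has FP-projective cycles. Your fallback deconstructibility route is therefore unnecessary.
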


\begin{proof}
The two conditions of Theorem~\ref{theorem-stable-CF} are met. Indeed by~\cite[Corollary~4.7]{bazzoni-hrbek-positselski-fp-projective-periodicity}, all of the cycles of any exact complex of  FP-projective modules must   again  be FP-projective. (Here we are using that $R$ is left coherent.) The proof of this can also be found in~\cite[Prop.~3.7]{gillespie-iacob-coherent-regular}.
 \end{proof}


\begin{remark} 
Corollary~\ref{cor-modelstruc} applies nicely to other classes of modules $\class{B}$ that are simultaneously the right half of a  cotorsion pair and the left half of a duality pair in the sense of~\cite{holm-jorgensen-duality}.  The hypotheses of  Corollary~\ref{cor-modelstruc} can be verified in these cases by appealing to~\cite[Theorem~2.5]{holm-jorgensen-precovers-purity}  and~\cite[Theorem~3.1]{holm-jorgensen-duality}.
For example, Theorem~\ref{theorem-weakly-Ding-injectives} generalizes in a straightforward manner to the class of all  \emph{absolutely clean} modules over a general ring $R$. It  is the right half of a complete hereditary cotorsion pair and also part of a  duality pair with the \emph{level} modules. Hence it is a covering class with all of the  necessary properties, and is complemented by the  \emph{(weakly) Gorenstein AC-injective}  modules from~\cite[Section~2]{bravo-gillespie-hovey}.  We leave the details to formulating the generalization of Corollary~\ref{corollary-weakly-Ding-injectives} to the interested reader. In particular, one can appeal to~\cite[Proposition~2.7 and Theorem~5.5]{bravo-gillespie-hovey} for the desired properties. 
\end{remark}


\section{Acyclic complexes of FP-injectives over Ding-Chen rings}\label{Section-Ding-Chen}

Throughout this section we let $R$ be a Ding-Chen ring. 
Recall that this is a two-sided coherent ring  having finite FP-injective dimension when viewed as a (both left and right) module over itself. 
We continue throughout to let $(\class{FP},\class{FI})$ denote the  FP-injective cotorsion pair, and to let $(\class{W}, \class{DI})$ denote the Ding injective cotorsion pair.  
Since $R$ is Ding-Chen, a module is in $\class{W}$ if and only if it has finite flat dimension if and only if it has finite FP-injective dimension; see Section~\ref{subsection-Ding-Chen}.
Therefore, the hypotheses of  Corollary~\ref{corollary-weakly-Ding-injectives} are met. 

We will also use the following notation throughout this section. 

\begin{notation}\label{notation-relative-G-inj}
$\class{Z}$ denotes the class of all cycle  modules  of all  exact complexes   of FP-injective modules. 
\end{notation}

That is, $M\in\class{Z}$ if and only if $M = Z_0X$ for some exact complex $X$ with each $X_n\in\class{FI}$.  

The next result shows that exact complexes of FP-injective modules have properties that are directly analogous to known properties of exact complexes of projective, injective, and flat $R$-modules. As in Definition~\ref{def-FP-pro-inj-mods}, $ \class{GFP}$ denotes the class  of all Gorenstein FP-pro-injective modules.

\begin{proposition}\label{prop-exact-complexes-FP-injectives}
We have  $\class{Z} = \rightperp{(\leftperp{\textnormal{w}\class{DI}})}$,  and   $\class{FP}\cap\class{Z} = \class{GFP}$.   In fact, we have:
\begin{enumerate}
\item   The following are equivalent for a chain complex $X$ of FP-injectives. 
\begin{enumerate}
\item $X$ is exact. 
\item $X$ is exact and $\Hom_R(\class{FP}\cap\class{FI}, - )$-acyclic.  
\item $X$ is exact and $\Hom_R(\class{FP}\cap\class{W}, - )$-acyclic.
\end{enumerate}
\item  The following are equivalent for a chain complex $X$ of FP-pro-injectives. 
\begin{enumerate}
\item $X$ is exact. 
\item $X$ is a totally acyclic complex of FP-pro-injective modules.  
\item $X$ is exact, $\Hom_R(\class{FP}\cap\class{W}, - )$-acyclic, and $\Hom_R(- , \class{FI})$-acyclic.
\end{enumerate}
\end{enumerate}
\end{proposition}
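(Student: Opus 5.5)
The plan is to reduce everything to one homological fact about Ding-Chen rings: if $N\in\class{W}$, then $\Ext^{i}_R(N,M)=0$ for all modules $M$ and all $i\geq d+1$, where $d$ is the FP-injective dimension of $R$. To prove it, I would take an injective coresolution of $M$ and let $K$ be its $d$th cosyzygy. I recall from~\cite{gillespie-Ding-Chen rings} that over a $d$-FC ring every module has Ding injective dimension at most $d$. Granting that, $K\in\class{DI}$, and dimension shifting gives $\Ext^{d+1}_R(N,M)\cong\Ext^1_R(N,K)=0$, because $\class{W}=\leftperp{\class{DI}}$. The same argument works in every degree $i\geq d+1$. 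This is the step I expect to be the main obstacle: the rest is formal, so it must be pinned down precisely, either by the cited Ding injective dimension bound or by an equivalent Ding-Chen fact.

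For part (1), the implications (c)$\Rightarrow$(b)$\Rightarrow$(a) are trivial, since $\class{FI}\subseteq\class{W}$ and hence $\class{FP}\cap\class{FI}\subseteq\class{FP}\cap\class{W}$. For (a)$\Rightarrow$(c), let $X$ be an exact complex of FP-injectives and let $N\in\class{FP}\cap\class{W}$. The pair $(\class{FP},\class{FI})$ is hereditary because $R$ is coherent, so $\Ext^i_R(N,X_n)=0$ for all $i\geq1$. Dimension shifting along the short exact sequences $Z_nX\rightarrowtail X_n\twoheadrightarrow Z_{n-1}X$ then gives
$$\Ext^1_R(N,Z_nX)\cong\Ext^{d+1}_R(N,Z_{n+d}X)=0$$
by the fact above. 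Hence $\Hom_R(N,-)$ carries each of these short exact sequences to a short exact sequence, and so $\Hom_R(N,X)$ is exact.

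Given part (1), the equality $\class{Z}=\rightperp{(\leftperp{\textnormal{w}\class{DI}})}$ follows from Theorem~\ref{theorem-weakly-Ding-injectives}, i.e. from Theorem~\ref{theorem-fibrant-objects}(6). Fibrant modules are cycles of exact complexes of FP-injectives, so they lie in $\class{Z}$. Conversely, every exact complex of FP-injectives is $\Hom_R(\class{FP}\cap\class{W},-)$-acyclic by (1)(c), so its cycles are fibrant. Intersecting with $\class{FP}$ and applying Corollary~\ref{corollary-weakly-Ding-injectives}, whose hypotheses hold over a Ding-Chen ring, gives $\class{FP}\cap\class{Z}=\class{GFP}$.

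For part (2), the implications (c)$\Rightarrow$(b)$\Rightarrow$(a) are again immediate: FP-pro-injectives lie in both $\class{FP}\cap\class{W}$ and $\class{FI}$. For (a)$\Rightarrow$(c), part (1) already gives $\Hom_R(\class{FP}\cap\class{W},-)$-acyclicity. For $\Hom_R(-,\class{FI})$-acyclicity, I would use the periodicity result cited in the proof of Corollary~\ref{corollary-weakly-Ding-injectives}: the cycles of an exact complex of FP-projectives over a coherent ring are FP-projective, see~\cite[Corollary~4.7]{bazzoni-hrbek-positselski-fp-projective-periodicity}. Thus each $Z_nX\in\class{FP}$, so $\Ext^1_R(Z_nX,A)=0$ for every $A\in\class{FI}$, and $\Hom_R(-,A)$ keeps the short exact sequences $Z_nX\rightarrowtail X_n\twoheadrightarrow Z_{n-1}X$ exact. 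Splicing these shows $\Hom_R(X,A)$ is exact.
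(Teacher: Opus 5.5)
Your overall structure matches the paper's. That covers the trivial implications, the deduction of $\class{Z}=\rightperp{(\leftperp{\textnormal{w}\class{DI}})}$ and $\class{FP}\cap\class{Z}=\class{GFP}$ from part (1) via Theorem~\ref{theorem-weakly-Ding-injectives} and Corollary~\ref{corollary-weakly-Ding-injectives}, and part (2) via FP-projective periodicity. The problem is the key lemma behind (1)(a)$\Rightarrow$(c), which is false as stated. Over a $d$-FC ring it is \emph{not} true that every module has Ding injective dimension at most $d$. Nor is it true that $\Ext^{d+1}_R(N,-)=0$ for all $N\in\class{W}$. Take $R=\prod_{\mathbb{N}}k$ with $k$ a field. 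It is von Neumann regular, hence coherent, and every module is flat and FP-injective. So $R$ is a Ding-Chen ring with $d=0$, $\class{W}$ is the class of all modules, and $\class{DI}=\rightperp{\class{W}}$ is just the class of injectives. Your lemma would then give $\Ext^1_R(N,M)=0$ for all $N,M$, i.e.\ $R$ semisimple, which it is not. The bound you are recalling is the Iwanaga-Gorenstein one, where FP-injective means injective; it does not carry over to the coherent setting.

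The argument can be repaired, because you only apply the vanishing to $N\in\class{FP}\cap\class{W}$, and for such $N$ one has $\textnormal{pd}(N)\leq d$. This is the second half of Proposition~\ref{prop-FP-dimensions}, whose proof does not use the present statement. Since $\textnormal{fd}(N)\leq d$, the $d$th syzygy $F$ of $N$ is flat. Choose $K\rightarrowtail P\twoheadrightarrow F$ with $P$ projective; then $K$ is flat, hence $FP\textnormal{-id}(K)\leq d$. Because $(\class{FP},\class{FI})$ is hereditary, $\Ext^{d+1}_R(N,K)=0$, so $\Ext^1_R(F,K)=0$ and $F$ is projective. With this in place of your lemma, your dimension shift $\Ext^1_R(N,Z_nX)\cong\Ext^{d+1}_R(N,Z_{n+d}X)=0$ is correct.

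The repaired argument is a more elementary proof of (1)(a)$\Rightarrow$(c) than the paper's. The paper instead uses the complete cotorsion pair $(\class{W}_{\textnormal{co}},\dwclass{I})$ in $\ch$ to get a degreewise split sequence $I\rightarrowtail W\twoheadrightarrow X$. Here $I$ is an exact complex of injectives and $W$ is a coacyclic complex of FP-injectives. It then imports two external facts: exact complexes of injectives over a Ding-Chen ring are $\Hom_R(\class{W},-)$-acyclic, and coacyclic complexes of FP-injectives have FP-injective cycles.
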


\begin{proof}
Note that if the three statements in (1) are equivalent, then we automatically have $\class{Z} = \rightperp{(\leftperp{\textnormal{w}\class{DI}})}$, by Theorem~\ref{theorem-weakly-Ding-injectives}.    Then $\class{FP}\cap\class{Z} = \class{GFP}$ follows from Corollary~\ref{corollary-weakly-Ding-injectives}.

We prove the equivalence of the statements in (1). It is clear that (c) $\implies$ (b) $\implies$ (a), so we only need to show (a) $\implies$ (c).
 For this, suppose that  $X$ is an exact complex of FP-injective modules. There is known to be a complete cotorsion pair, $(\class{W}_{\textnormal{co}}, \dwclass{I})$, where $\dwclass{I}$ denotes the class of all chain complexes of injective $R$-modules and $\class{W}_{\textnormal{co}}$ is the class of all chain complexes that are \emph{coacyclic} in the sense of Becker~\cite{becker}. As the cotorsion pair has enough projectives, we may  write a short exact sequence 
$0 \xrightarrow{} I \xrightarrow{} W  \xrightarrow{} X \xrightarrow{} 0$ having  $W\in\class{W}_{\textnormal{co}}$ and $I\in \dwclass{I}$.
Since $I$ is a complex of injective modules, this short exact sequence is split in each degree. So applying $\Hom_R(N,-)$ for any given module $N$, we get another short exact sequence of complexes of abelian groups:
$$0 \xrightarrow{} \Hom_R(N, I) \xrightarrow{} \Hom_R(N, W)  \xrightarrow{} \Hom_R(N, X) \xrightarrow{} 0.$$ 
Since $W$ and $X$ are both exact, so is $I$, and therefore  $\Hom_R(N,I)$ is exact whenever $N\in \class{W}$; see \cite[Proposition~7.9]{stovicek-purity}  or \cite[Corollary~4.2]{gillespie-ding-modules}.  Also, since the class of all FP-injective modules is  closed under extensions,  $W$ is a coacylic complex of FP-injective modules. Hence it has FP-injective cycles by  \cite[Proposition~6.11]{stovicek-purity},  (or see~\cite[Corollary~3.6]{gillespie-iacob-coherent-regular}), and so $\Hom_R(N, W)$ is exact whenever $N$ is an FP-projective module. We conclude that both  $\Hom_R(N,I)$ and $\Hom_R(N,W)$, and hence $\Hom_R(N,X)$, are all  exact complexes whenever $N \in \class{FP}\cap\class{W}$. 

The equivalence of the three statements in part (2) now also follows. Indeed as we already noted in the proof of Corollary~\ref{corollary-weakly-Ding-injectives}, the cycles of any exact complex having FP-projective components will automatically have all of its cycles FP-projective. So such a complex is  $\Hom_R(- , \class{FI})$-acyclic.
\end{proof}

By~\cite[Corollary 3.18]{ding and chen 93}, whenever the FP-injective dimensions, $FP\textnormal{-id}(R_R)$ and $FP\textnormal{-id}({}_RR)$, are both finite, then they are equal. If this number is $d <\infty$, then we say that $R$ is a Ding-Chen ring of \emph{dimension} $d$ (or a $d$-FC ring). 

\begin{proposition}\label{prop-FP-dimensions}
Let $R$ be a  Ding-Chen ring of dimension $d$. Then $N\in\class{FP}\cap\class{W}$ if and only if there exists an exact sequence 
$$0 \xrightarrow{} N  \xrightarrow{} W ^0 \xrightarrow{} W^1 \xrightarrow{} \cdots  \xrightarrow{} W^d \xrightarrow{} 0 $$
with each $W^i \in \class{FP}\cap\class{FI}$. 
Moreover, the projective dimension of any $N\in\class{FP}\cap\class{W}$ satisfies  $\textnormal{pd}(N) \leq d$.
\end{proposition}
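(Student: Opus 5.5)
The equivalence is essentially Lemma~\ref{lemma-CF-dimensions}, applied with $(\class{C},\class{F}) = (\class{FP},\class{FI})$, together with the fact that over a Ding-Chen ring of dimension $d$ every module in $\class{W}$ has FP-injective dimension at most $d$ (Section~\ref{subsection-Ding-Chen}). The plan is as follows.

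\emph{Forward direction.} Let $N\in\class{FP}\cap\class{W}$. Then $FP\textnormal{-id}(N)\leq d$, which is exactly $\class{FI}\textnormal{-dim}(N)\leq d$ in the sense of Section~\ref{subsec-tech-acyclic}. The ($\implies$) part of Lemma~\ref{lemma-CF-dimensions} then gives the exact sequence $0\to N\to W^0\to\cdots\to W^d\to 0$ with each $W^i\in\class{FP}\cap\class{FI}$. To see this concretely: coresolve $N$ using enough injectives of $(\class{FP},\class{FI})$. Since $N\in\class{FP}$, heredity keeps every cosyzygy in $\class{FP}$. The $d$th cosyzygy is FP-injective by dimension shifting, because $\Ext^1(P,K^d)\cong\Ext^{d+1}(P,N)=0$ for $P$ finitely presented.

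\emph{Converse.} Suppose such a sequence exists. Each $W^i\in\class{FI}\subseteq\class{W}$, and $\class{W}$ is thick, so breaking the sequence into short exact sequences gives $N\in\class{W}$. Working from the right end, the cosyzygies lie in $\class{FP}$, since $\class{FP}$ is closed under kernels of epimorphisms by heredity. Hence $N\in\class{FP}$. Alternatively, simply cite the ($\impliedby$) part of Lemma~\ref{lemma-CF-dimensions}.

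\emph{The bound $\textnormal{pd}(N)\leq d$.} This is the step requiring a genuine new input. I would first show that every FP-pro-injective module $W$ is projective. Over a Ding-Chen ring, FP-injective modules lie in $\class{W}$, hence have flat dimension at most $d$. The key claim is that an FP-projective module of finite flat dimension is projective.
\begin{itemize}
\item An FP-projective module is a direct summand of a transfinite extension of finitely presented modules.
\item A finitely presented module of finite flat dimension has a finite resolution by finitely generated projectives. This uses coherence: the syzygies are finitely presented, and finitely presented flat modules are projective.
\end{itemize}
I expect this step to be the main obstacle. One must pass from ``$W$ is filtered by finitely presented modules and has finite flat dimension'' to ``$W$ is projective'', and a transfinite filtration of $W$ need not have pieces of finite flat dimension.

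I would try the following route first. Take a projective resolution $0\to K\to P_{d-1}\to\cdots\to P_0\to W\to 0$. The module $K$ is flat since $\textnormal{fd}(W)\leq d$, and $K$ is FP-projective by heredity. A flat FP-projective module over a coherent ring is projective: flat modules are direct limits of finitely generated projectives, and FP-projectivity makes the relevant pure sequence split, possibly via a result from~\cite{stovicek-purity} or~\cite{bazzoni-hrbek-positselski-fp-projective-periodicity}. This gives $\textnormal{pd}(W)\leq d$. Moreover, $\Ext^{i}(W,P)=0$ for all $i\geq 1$ and all projective $P$, since $P\in\class{FI}$ on a Ding-Chen ring. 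Indeed, $\Ext^{i}(W,P)\cong\Ext^1(\Omega^{i-1}W,P)$, and the syzygy $\Omega^{i-1}W$ is FP-projective by heredity. Finiteness of $\textnormal{pd}(W)$ then forces $W$ to be projective.

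With this in hand, the sequence $0\to N\to W^0\to\cdots\to W^d\to 0$ is a length-$d$ coresolution of $N$ by projectives. To bound $\textnormal{pd}(N)$ itself, argue directly. For $N$ itself, the same flat-FP-projective argument gives $\textnormal{pd}(N)\leq\textnormal{fd}(N)\leq d$: a projective resolution of $N$ truncated at stage $d$ has a kernel that is flat and FP-projective, hence projective.

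Actually this last argument needs only $N\in\class{FP}$ and $\textnormal{fd}(N)\leq d$. So I would present the projective dimension bound this way, with the lemma ``flat $+$ FP-projective $\Rightarrow$ projective'' as the crux.
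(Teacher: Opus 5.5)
Your proof of the equivalence is fine and matches the paper: it is Lemma~\ref{lemma-CF-dimensions} with $(\class{C},\class{F})=(\class{FP},\class{FI})$, together with $FP\textnormal{-id}(N)\leq d$ for $N\in\class{W}$. The gap is in the bound $\textnormal{pd}(N)\leq d$.

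\textbf{The key lemma is false.} You take as the crux that a flat FP-projective module over a coherent ring is projective. This fails even over Ding-Chen rings. Take $R=\mathbb{Z}$, which is Iwanaga-Gorenstein and hence Ding-Chen of dimension $1$. Over a Noetherian ring FP-injective means injective, so every module is FP-projective. Thus $\mathbb{Q}$ is flat and FP-projective but not projective. Your sketch breaks exactly where you expected. To split the pure sequence $0\to K\to P\to F\to 0$ you need $\Ext^1_R(F,K)=0$. FP-projectivity of $F$ gives this only if $K$ is FP-injective, but $K$ is merely flat.

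\textbf{Your side claims also fail.} The same example refutes the claim that projectives are FP-injective over a Ding-Chen ring, since $\mathbb{Z}$ is not injective. It also refutes the claim that FP-pro-injectives are projective, since $\mathbb{Q}$ is injective, hence FP-pro-injective, but not projective. So that detour has to go as well.

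\textbf{The missing idea.} Use the FP-projectivity of $N$ itself in degree $d+1$, not that of its syzygy in degree $1$, and pair it against the finite FP-injective dimension of flat modules. This is the paper's argument:
\begin{enumerate}
\item Let $F=\Omega^dN$, which is flat, and take $0\to K\to P\to F\to 0$ with $P$ projective.
\item Then $K$ is flat, so $FP\textnormal{-id}(K)\leq d$ because $R$ is Ding-Chen.
\item Since $N\in\class{FP}$ and $(\class{FP},\class{FI})$ is hereditary, dimension shifting in the second variable gives $\Ext^{d+1}_R(N,K)=0$.
\item Dimension shifting along the projective resolution of $N$ gives $\Ext^1_R(F,K)\cong\Ext^{d+1}_R(N,K)=0$.
\item So the sequence splits and $F$ is projective.
\end{enumerate}
The splitting comes from $F$ being a $d$th syzygy of an FP-projective module, not from $F$ being flat and FP-projective. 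In the $\mathbb{Z}$ example, $\mathbb{Q}$ is not such a syzygy, since it does not embed in a free abelian group.
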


\begin{proof}
The first statement  is a special case of Lemma~\ref{lemma-CF-dimensions}.  We will show that $\textnormal{pd}(N) \leq d$ whenever $N\in\class{FP}\cap\class{W}$. First, since $N\in\class{W}$,  its  flat dimension satisfies $\textnormal{fd}(N)\leq d$,  and so we may write an exact sequence  $$0 \xrightarrow{} F  \xrightarrow{} P_{d-1} \xrightarrow{} \cdots  \xrightarrow{} P_0 \xrightarrow{} N \xrightarrow{} 0$$
where each $P_i$ is projective and   $F$ is necessarily flat. We are to show that $F$ is even projective. To do so, write another short exact sequence $0 \xrightarrow{} K  \xrightarrow{} P \xrightarrow{} F \xrightarrow{} 0$  with $P$ projective. Then  $K$ too is necessarily flat, and so  $FP\textnormal{-id}(K)\leq d$. Since $N\in\class{FP}$ we conclude $\Ext^{d+1}_R(N,K) = 0$.  By dimension shifting we get $\Ext^{d+1}_R(N,K) \cong \Ext^{1}_R(F,K) = 0$. This implies that the above short exact sequence must split.  In particular, $P \cong  K \Oplus F$, hence  $F$  is projective. 
\end{proof}


Recall from Section~\ref{subsection-Ding-Chen}  that we let  
$\textnormal{Stmod}(R) := \rmod/\class{W}$ denote  the stable module category of $R$. Any abelian model structure on $\rmod$ having $\class{W}$ as its class of trivial objects will recover  $\textnormal{Stmod}(R)$ as its homotopy category. We get the following main result.

\begin{theorem}\label{theorem-exact-FP-injectives}
Let $R$ be a  Ding-Chen ring.
Then   $(\class{FP}\cap\class{W},\class{Z})$ is  a complete hereditary cotorsion pair, cogenerated by a set, and we even have  a cofibrantly generated  abelian model structure on $R\textnormal{-Mod}$,
$$\mathfrak{M}_{fp} = (\class{FP}, \class{W}, \class{Z}).$$
It satisfies $\class{FP}\cap\class{Z} = \class{GFP}$, and consequently
 $$\textnormal{Stmod}(R) \simeq \textnormal{St}(\class{GFP}),$$ 
 where $\textnormal{St}(\class{GFP})$ denotes the stable category of  the Frobenius category of all Gorenstein FP-pro-injective modules. 

Moreover, the following statements are equivalent and further characterize the class  $\class{Z}$ of fibrant objects: 
\begin{enumerate}
\item $M \in \class{Z}  = \rightperp{(\leftperp{\textnormal{w}\class{DI}})} = \rightperp{(\leftperp{(\class{FI}\cup\class{DI})})}$.
\item There is a short exact sequence $0 \xrightarrow{} D \xrightarrow{} A \xrightarrow{} M \xrightarrow{}0$ with $A \in \class{FI}$ and $D\in\class{DI}$.
\item $\Ext^i_R(W,M)=0$ for all FP-pro-injective modules  $W$  and all $i\geq1$.
\item  $M=Z_{0}X$ for some exact  $\Hom(\class{FP}\cap\class{FI}, -)$-acyclic complex $X$ of FP-injective modules.
\item There is a $\Hom_R(\class{FP}\cap \class{FI},-)$-exact short exact sequence $0 \xrightarrow{} M \xrightarrow{} D \xrightarrow{} A \xrightarrow{}0$ with $A \in \class{FI}$ and $D\in\class{DI}$.
\end{enumerate}
\end{theorem}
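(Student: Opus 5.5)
Most of the statement is assembled from earlier results, so the plan is to combine Theorem~\ref{theorem-weakly-Ding-injectives}, Proposition~\ref{prop-exact-complexes-FP-injectives}, Corollary~\ref{corollary-weakly-Ding-injectives} and Theorem~\ref{theorem-fibrant-objects}, and to add one new ingredient: the reduction of the test class $\class{FP}\cap\class{W}$ to $\class{FP}\cap\class{FI}$ in (3)--(5).

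First, Proposition~\ref{prop-exact-complexes-FP-injectives} gives $\class{Z} = \rightperp{(\leftperp{\textnormal{w}\class{DI}})}$. Theorem~\ref{theorem-weakly-Ding-injectives} then says $\mathfrak{M}_{fp} = (\class{FP},\class{W},\class{Z})$ is a cofibrantly generated hereditary Hovey triple. In particular $(\class{FP}\cap\class{W},\class{Z})$ is a complete hereditary cotorsion pair, cogenerated by a set since the model structure is cofibrantly generated (deconstructibility of $\class{FP}\cap\class{W}$). The equality $\class{FP}\cap\class{Z}=\class{GFP}$ is also in Proposition~\ref{prop-exact-complexes-FP-injectives}. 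Since the trivial class is $\class{W}$, $\textnormal{Ho}(\mathfrak{M}_{fp}) = \rmod/\class{W} = \textnormal{Stmod}(R)$, and Corollary~\ref{corollary-weakly-Ding-injectives} identifies it with $\textnormal{St}(\class{GFP})$.

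For the characterizations, (1)$\iff$(2) is Theorem~\ref{theorem-cots-models}(1)$\iff$(3) with $\class{F}=\class{FI}$, $\class{R}=\class{DI}$, together with the equality $\leftperp{(\class{FI}\cup\class{DI})}=\leftperp{\textnormal{w}\class{DI}}$ from Theorem~\ref{theorem-weakly-Ding-injectives}. For (1)$\iff$(4): Theorem~\ref{theorem-fibrant-objects}(6) describes fibrant $M$ via exact $\Hom_R(\class{FP}\cap\class{W},-)$-acyclic complexes of FP-injectives. By Proposition~\ref{prop-exact-complexes-FP-injectives}(1), this acyclicity condition is equivalent to $\Hom_R(\class{FP}\cap\class{FI},-)$-acyclicity, and indeed to mere exactness. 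Alternatively one can invoke Lemma~\ref{lemma-exact-complexes-CW-totally}, since every object of $\class{FP}\cap\class{W}$ has finite $\class{FI}$-dimension. For (5), I would prove (1)$\Rightarrow$(5) exactly as in Theorem~\ref{theorem-fibrant-objects}(1)$\Rightarrow$(5), since the $\class{FP}\cap\class{W}$-exactness obtained there is stronger. For (5)$\Rightarrow$(3), I would first prove (3)$\iff$(1) and then argue as follows: given (5), apply $\Hom_R(W,-)$ with $W$ FP-pro-injective and use $\Ext^1_R(W,D)=0$ (as $W\in\class{W}$) to get $\Ext^1_R(W,M)=0$. Then dimension shift using $\Ext^{i}_R(W,D)=0$ and $\Ext^i_R(W,A)=0$ for all $i\ge1$, which holds by heredity of $(\class{FP},\class{FI})$ and of $(\class{W},\class{DI})$. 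From the long exact sequence we get $\Ext^{i+1}_R(W,M)\cong\Ext^{i}_R(W,A)$ for $i\geq 1$, which is $0$.

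The main step is (3)$\iff$(1), that is, that Ext-vanishing against FP-pro-injectives in all degrees is equivalent to $\Ext^1$-vanishing against $\class{FP}\cap\class{W}$. For (1)$\Rightarrow$(3), note $\class{FP}\cap\class{FI}\subseteq\class{FP}\cap\class{W}$, and heredity of the cotorsion pair gives vanishing in all degrees $i\ge1$. For (3)$\Rightarrow$(1), take $N\in\class{FP}\cap\class{W}$ and use Proposition~\ref{prop-FP-dimensions} to get a finite coresolution $0\to N\to W^0\to\cdots\to W^d\to0$ by FP-pro-injectives. Let $N^j$ denote the successive kernels, so $N^0=N$ and $N^d=W^d$. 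Descending induction then shows $\Ext^i_R(N^j,M)=0$ for all $i\ge1$: the base case $N^d=W^d$ holds by (3), and from $0\to N^j\to W^j\to N^{j+1}\to0$ we get $\Ext^i_R(N^j,M)\hookrightarrow\Ext^{i+1}_R(N^{j+1},M)=0$, using $\Ext^i_R(W^j,M)=0$. Hence $\Ext^1_R(N,M)=0$, which is Theorem~\ref{theorem-cots-models}(4). The only point needing care is that the exact sequence has the correct direction for this shift, and it does.
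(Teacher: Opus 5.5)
Your proposal is correct and follows essentially the same route as the paper. The first part is assembled from Theorem~\ref{theorem-weakly-Ding-injectives}, Proposition~\ref{prop-exact-complexes-FP-injectives} and Corollary~\ref{corollary-weakly-Ding-injectives}. The equivalences (1)$\iff$(2), (1)$\iff$(4) and (1)$\Rightarrow$(5) come from the general merging theorems. The implication (3)$\Rightarrow$(1) is the same dimension shift along the finite FP-pro-injective coresolution of Proposition~\ref{prop-FP-dimensions}. Your explicit long-exact-sequence argument for (5)$\Rightarrow$(3) fills in a step the paper only asserts, and your descending induction spells out its one-line shift $\Ext^1_R(N,M)\cong\Ext^{d+1}_R(W^d,M)$.
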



\begin{proof}
The first part of the theorem is immediate from Corollary~\ref{corollary-weakly-Ding-injectives} and Proposition~\ref{prop-exact-complexes-FP-injectives}. Let us prove the equivalent characterizations of the fibrant objects.

Being special cases  of Theorem~\ref{theorem-cots-models}
(1)(3), we get the equivalence of statements (1) and (2). 
Moreover, these statements certainly imply (3) since  $\class{FP}\cap\class{FI}\subseteq \class{FP}\cap\class{W}$ and the cotorsion pair is hereditary. 
We now  prove that, conversely, (3) implies (1). So assume that $\Ext^i_R(W,M)=0$ for all modules $W\in \class{FP}\cap \class{FI}$ and all $i\geq1$.  Letting $N\in\class{FP}\cap\class{W}$, we are to show $\Ext^1_R(N,M)=0$. By Proposition~\ref{prop-FP-dimensions} there is an exact sequence 
$$0 \xrightarrow{} N  \xrightarrow{} W ^0 \xrightarrow{} W^1 \xrightarrow{} \cdots  \xrightarrow{} W^d \xrightarrow{} 0 $$
with each $W^i \in \class{FP}\cap\class{FI}$. By dimension shifting we have $$\Ext^1_R(N,M) = \Ext^{d+1}_R(W^d,M)=0.$$
 We have now shown  (1)--(3) are equivalent.

Statements (1) and (4) are  equivalent by Proposition~\ref{prop-exact-complexes-FP-injectives}. Finally, statement (5) is also equivalent to the others because it is an instance  of Theorem~\ref{theorem-fibrant-objects}(5).  Note too that the existence of  a  short exact sequence 
as in statement (5)  easily implies  statement (3).  
\end{proof}



One could ask whether or not  $\class{Z}$ contains anything more than  the class $\class{GI}$ of Gorenstein injectives.  So it seems worthwhile to note that if $\class{GI} = \class{Z}$, then $R$ must be left Noetherian. Indeed $\class{GI} = \class{Z}$ implies that each FP-injective module is Gorenstein injective. But over a Ding-Chen ring, every Gorenstein injective module is Ding injective. Hence any FP-injective   must be in $\class{W}\cap\class{DI}$, the core of the Ding injective cotorsion pair, $(\class{W},\class{DI})$.   It follows that every FP-injective is injective and hence $R$ is left Noetherian.


\subsection{Complete duality with Gorenstein flat modules}

We continue to let $R$ denote a Ding-Chen ring.  We now display a perfect  (character module) duality  between the modules  in the class $\class{Z}$  and those  in the class $\class{GF}$ of \emph{Gorenstein flat} modules.  We will let $R^\circ$ denote the opposite ring of $R$. Then a left (resp. right) $R^\circ$-module is equivalent to a right (resp. left) $R$-module. In  this notation, recall that for any given $R$-module $M$, its \emph{character module} is defined to be $M^+ = \Hom_{\Z}(M,\Q)$, and that $M^+$ becomes a $R^\circ$-module in a natural way.


\begin{theorem}\label{theorem-duality}
Let $R$ be a  Ding-Chen ring, $\class{Z}$ the class of all  cycles of exact complexes of  FP-injective $R$-modules, and $\class{GF}$   the class of all Gorenstein flat $R^\circ$-modules. Then $(\class{GF},\class{Z})$ is a complete duality pair,  in the sense of~\cite[Def.~7]{gillespie-iacob-duality-pairs}. In particular, $(\class{GF},\class{Z})$ and $(\class{Z}, \class{GF})$ are both (coproduct and product closed)  duality pairs  in the sense of~\cite[Def.~2.1]{holm-jorgensen-duality}. 
\end{theorem}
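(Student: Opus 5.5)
The plan is to check each axiom of a complete duality pair from~\cite[Def.~7]{gillespie-iacob-duality-pairs}. Concretely, I will show: (a) $M\in\class{Z}$ iff $M^+\in\class{GF}$; (b) $G\in\class{GF}$ iff $G^+\in\class{Z}$; (c) both classes are closed under coproducts and products. The closure properties are the easy part. $\class{Z}=\rightperp{(\class{FP}\cap\class{W})}$ by Theorem~\ref{theorem-exact-FP-injectives}, so it is closed under products. It is also closed under coproducts, since over a coherent ring a coproduct of exact complexes of FP-injectives is again one. $\class{GF}$ is closed under coproducts and extensions over any ring. Over a Ding-Chen ring it is also closed under products, since $R^\circ$ is coherent. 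Once (a) and (b) hold, the statement that $(\class{GF},\class{Z})$ and $(\class{Z},\class{GF})$ are both duality pairs in the sense of~\cite[Def.~2.1]{holm-jorgensen-duality} follows formally.

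For the implications from a module to its character module, I work with complexes. Let $M=Z_0X$ with $X$ an exact complex of FP-injectives. Then $X^+$ is an exact complex of flat $R^\circ$-modules, because $R$ is coherent, and $M^+$ is one of its cycles. Over a Ding-Chen ring every injective module has finite flat dimension, so $I\otimes X^+$ is exact for every injective $I$; this is the standard dimension-shifting argument using $\textnormal{fd}(I)\le d$. Hence $X^+$ is F-totally acyclic and $M^+\in\class{GF}$. Dually, let $G=Z_0Y$ with $Y$ an F-totally acyclic complex of flats. Then $Y^+$ is an exact complex of injective, hence FP-injective, modules with $G^+$ as a cycle. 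So $G^+\in\class{Z}$.

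The converse implications are the main obstacle, since a priori neither class is known to be closed under pure submodules. I would reduce both to Tor-vanishing against a set of finitely presented test modules. Let $\class{S}$ be the finitely presented modules in $\class{W}$, i.e.\ those of finite projective dimension. By Proposition~\ref{prop-FP-dimensions} these have $\textnormal{pd}\le d$, and each admits a finite resolution by finitely generated projectives, $R$ being coherent. Using the Hill lemma / deconstructibility of $\class{FP}$ by finitely presented modules, together with the fact that $\class{W}$ is thick, I would show that every $N\in\class{FP}\cap\class{W}$ is a direct summand of an $\class{S}$-filtered module. This is the step I expect to be delicate, and I would first try to deconstruct the cotorsion pair $(\class{FP}\cap\class{W},\class{Z})$ directly via~\cite{stovicek-deconstructible}. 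Granting it, Eklof's lemma gives $\class{Z}=\rightperp{\class{S}}$. For $N\in\class{S}$ we have $\Ext^1_R(N,M)^+\cong\Tor_1^R(M^+,N)$. Hence $M\in\class{Z}$ iff $\Tor_1(M^+,N)=0$ for all $N\in\class{S}$, and by dimension shifting this is equivalent to $\Tor_{i}(M^+,N)=0$ for all $i\ge1$.

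It then remains to show that a module $G$ with $\Tor_i(G,N)=0$ for all $N\in\class{S}$ and all $i\ge1$ is Gorenstein flat. By the above, this condition holds for $G$ exactly when $G^+\in\class{Z}$, and it also holds for $M^+$ whenever $M\in\class{Z}$; so this one step gives both converses. Finitely presented modules of finite flat dimension detect finite-flat-dimension Tor-vanishing, since Tor commutes with direct limits and every module of flat dimension $\le d$ is a direct limit of such. Therefore the condition is $\Tor_i(G,L)=0$ for all $L\in\class{W}$. Over a Ding-Chen ring this characterizes $\class{GF}$: it is the left half of the flat-cotorsion model structure $\mathfrak{M}_{flat}$, whose trivial objects form $\class{W}$~\cite{gillespie-ding-modules}. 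This completes (a) and (b).
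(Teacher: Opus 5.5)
You have a genuine gap: the two converse implications rest on three claims that you assert rather than prove.

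Your closure arguments and the two forward implications are fine. They reprove by hand the fact from \cite[Props.~7.2/7.3]{gillespie-ding-modules} that the paper quotes.

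\textbf{The converse of (a).} The implication ``$M^+\in\class{GF}\Rightarrow M\in\class{Z}$'' rests entirely on $\class{Z}=\rightperp{\class{S}}$. That is, it needs every $N\in\class{FP}\cap\class{W}$ to be a direct summand of an $\class{S}$-filtered module, and this is exactly the step you grant. The tools you name do not deliver it. A filtration of $N$, or of some $N\Oplus N''$, with finitely presented factors gives no control over whether the partial unions lie in $\class{W}$, even after refining it with the Hill lemma. So thickness of $\class{W}$ cannot be used to put the factors into $\class{S}$. Moreover, the complement $N''$ is only known to be FP-projective and may have infinite flat dimension. Deconstructibility in the sense of \cite{stovicek-deconstructible} yields filtrations by $\le\kappa$-presented members of $\class{FP}\cap\class{W}$, not by finitely presented ones. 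The equality $\class{Z}=\rightperp{\class{S}}$ can be recovered a posteriori from the theorem itself, but that is circular here.

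\textbf{The converse of (b).} This leans on two further assertions that need real arguments. First, you claim every module of finite flat dimension is a direct limit of modules in $\class{S}$. This is true over a Ding-Chen ring, but it needs, e.g., finitely presented Auslander--Buchweitz approximations $0\to F\to S\to G\to 0$ with $G$ finitely presented Ding projective, plus a factorization criterion for direct limits. Second, you claim that $\textnormal{Tor}_i(G,L)=0$ for all $L\in\class{W}$ and $i\ge 1$ forces $G\in\class{GF}$. This is not what $\mathfrak{M}_{flat}$ gives: it gives $\Ext$-orthogonality against cotorsion right modules of finite flat dimension, not $\textnormal{Tor}$-orthogonality against $\class{W}$.

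\textbf{The missing idea.} The paper's argument removes all three issues: approximate, then dualize.
\begin{enumerate}
\item Suppose $M^+\in\class{GF}$. Completeness of $(\class{W},\class{DI})$ gives $0\to D\to W\to M\to 0$. In $0\to M^+\to W^+\to D^+\to 0$ both ends are Gorenstein flat. Hence $W^+\in\class{GF}\cap\class{W}^\circ$, which is the class of flat modules, so $W\in\class{FI}$. Theorem~\ref{theorem-exact-FP-injectives}(2) then gives $M\in\class{Z}$.
\item Dually, suppose $L^+\in\class{Z}$. Use $0\to L\to W\to P\to 0$ with $W\in\class{W}^\circ$ and $P$ Ding projective. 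Then $W^+\in\class{W}\cap\class{Z}=\class{FI}$, so $W$ is flat, and \cite[Theorem~4.11(4)]{saroch-stovicek-G-flat} gives $L\in\class{GF}$.
\end{enumerate}
If you want to keep your Tor characterization of $\class{GF}$, the same argument proves it. Note that $G^+\in\rightperp{\class{W}}=\class{DI}$, and run the second step with $\class{DI}$ in place of $\class{Z}$.
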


\begin{proof}
Since $R$ is coherent, it is well known that $(\class{F}, \class{FI})$ is a complete duality pair (in the sense of~\cite[Def.~7]{gillespie-iacob-duality-pairs}), where $\class{F}$ is the class of flat $R^\circ$-modules and $\class{FI}$ is the class of FP-injective $R$-modules. 
Now by  \cite[Props.~7.2/7.3]{gillespie-ding-modules}, an $R^\circ$-module $L$ is Gorenstein flat if and only if it is a cycle of some acyclic complex $X$ of flat $R^\circ$-modules.  So clearly, $Z \in \class{Z}$ implies $Z^+ \in \class{GF}$, and, $L\in\class{GF}$ implies $L^+\in\class{Z}$.  

We must prove the converses. Before we do so, we let $\class{W}^\circ$ denote the class of trivial $R^\circ$-modules. That is, $R^\circ$-modules having finite flat (equivalently, FP-injective) dimension. Observe that $W \in \class{W}$ (resp. $\class{W}^\circ$) if and only if $W^+ \in \class{W}^\circ$ (resp. $\class{W}$).

So now suppose $Z$ is some $R$-module such that that $Z^+\in \class{GF}$. Using that the Ding injective cotorsion pair, $(\class{W},\class{DI})$, has enough projectives, write 
 a short exact sequence $0 \xrightarrow{} D \xrightarrow{} W \xrightarrow{} Z \xrightarrow{}0$ with $W \in \class{W}$ and $D\in\class{DI}$. Applying $\Hom_{\Z}(-,\Q)$, we get a short exact sequence  of $R^\circ$-modules, $$0 \xrightarrow{} Z^+ \xrightarrow{} W^+ \xrightarrow{} D^+ \xrightarrow{}0.$$
  We see that  $D^+\in\class{GF}$, and also  $Z^+ \in \class{GF}$ by assumption. Hence $W^+\in\class{GF}\cap\class{W}^\circ = \class{F}$. It follows that $W \in \class{FI}$, and from Theorem~\ref{theorem-exact-FP-injectives}(2) the above short exact sequence implies that $Z\in\class{Z}$. 
 
Similarly, suppose $L$ is an $R^\circ$-module such that $L^+\in\class{Z}$. Let $(\class{DP},\class{W}^\circ)$ denote the Ding projective cotorsion pair, so here $\class{DP}$ is the class of all Ding projective $R^\circ$-modules. Using  that this cotorsion pair has  enough injectives, we may write  a short exact sequence $0 \xrightarrow{} L \xrightarrow{} W \xrightarrow{} P \xrightarrow{}0$ with $W \in \class{W}^\circ$ and $P\in\class{DP}$. Applying $\Hom_{\Z}(-,\Q)$, we get a short exact sequence  of $R$-modules, 
$$0 \xrightarrow{} P^+ \xrightarrow{} W^+ \xrightarrow{} L^+ \xrightarrow{}0.$$
 We see that  $P^+\in\class{Z}$, and also  $L^+ \in \class{Z}$ by assumption. Hence $W^+\in\class{W}\cap\class{Z} = \class{FI}$. It follows that  $W \in \class{F}$, and so  from \cite[Theorem~4.11(4)]{saroch-stovicek-G-flat}  the above short exact sequence implies that $L\in\class{GF}$.  (Again, the PGF-modules and Ding projective modules are the same thing when $R$ is coherent.)

Finally, we note that since $R$ is coherent, the classes $\class{F}$ and $\class{FI}$ are each closed under set indexed products and coproducts. Hence the same is true of $\class{GF}$ and $\class{Z}$. They are also both closed under direct summands, and $R$ (considered as an $R^\circ$-module) is in $\class{GF}$. So $(\class{GF},\class{Z})$ is a coproduct and product closed complete duality pair. 
\end{proof}


\end{document}